%

\documentclass[preprint]{imsart}
\RequirePackage[OT1]{fontenc}
\RequirePackage{amsthm,amsmath,graphicx}
\RequirePackage[sort,numbers]{natbib}
\usepackage{verbatim}
\RequirePackage[colorlinks,citecolor=red,urlcolor=blue]{hyperref}

\usepackage{amsfonts,amssymb}
\usepackage[ruled,vlined]{algorithm2e}
\usepackage{subcaption}
\usepackage{float}



\startlocaldefs

\theoremstyle{plain}
\newtheorem{lemma}{Lemma}[section]

\newtheorem{proposition}{Proposition}[section]
\newtheorem{theorem}{Theorem}[section]
\newtheorem{definition}{Definition}[section]
\newtheorem{corollary}{Corollary}[section]

\DeclareMathOperator*{\argmin}{arg\,min}
\DeclareMathOperator*{\argmax}{arg\,max}

\let\oldenumerate\enumerate
\renewcommand{\enumerate}{
	\oldenumerate
	\setlength{\itemsep}{1pt}
	\setlength{\parskip}{0pt}
	\setlength{\parsep}{0pt}
}



\newcommand{\sgn}[1]{\text{sgn} \left ( #1 \right )}


\endlocaldefs

\begin{document}

\begin{frontmatter}

\title{AdaBoost and robust one-bit compressed sensing}
\runtitle{AdaBoost and 1-bit CS}

\author{\fnms{Geoffrey} \snm{Chinot}\ead[label=e1]{geoffrey.chinot@stat.math.ethz.ch}},
\author{\fnms{Felix} \snm{Kuchelmeister}\ead[label=e2]{felix.kuchelmeister@stat.math.ethz.ch}},
\author{\fnms{Matthias} \snm{L\"offler}\ead[label=e3]{matthias.loeffler@stat.math.ethz.ch}}
\and 
\author{\fnms{Sara} \snm{van de Geer}\ead[label=e4]{sara.vandegeer@stat.math.ethz.ch}}
\affiliation{ETH Z\"urich \thanksmark{a}}
\address{Seminar for Statistics, Department of Mathematics, ETH Z\"urich, Switzerland,\\
Emails: \printead*{e1,e2}; \\ \printead*{e3,e4} }

\begin{abstract}
This paper studies binary classification in robust 
one-bit compressed sensing with adversarial errors. It is assumed that the model is overparameterized and that the parameter of interest is effectively sparse. AdaBoost is considered, and, through its relation to the max-$\ell_1$-margin-classifier, prediction error bounds are derived. The developed theory is general and allows for heavy-tailed feature distributions, requiring only a weak moment assumption and an anti-concentration condition. Improved convergence rates are shown when the features satisfy a small deviation lower bound. 
In particular, the results provide an explanation why interpolating adversarial noise can be harmless for classification problems. 
Simulations illustrate the presented theory. 
\end{abstract}
\begin{keyword}[class=MSC]
\kwd[Primary	]{62H30},
\kwd[ Secondary ]{94A12}
\end{keyword}

\begin{keyword}
\kwd{AdaBoost, Overparameterization, classification, one-bit compressed sensing, sparsity}
\end{keyword}



\end{frontmatter}

\section{Introduction}
Classification is a fundamental statistical problem in data science, with applications ranging from genomics to character recognition. AdaBoost, proposed by Freund and Schapire \cite{FreundSchapire97} and further developed in \cite{SchapireSinger99}, is a popular and successful algorithm from the machine learning literature to tackle such classifications problems.  It is based on building an additive model with coefficients $\tilde \beta_T$ composed of simple classifiers such as regression trees and then using the binary classification rule $\textnormal{sgn}(\langle \tilde \beta_T, \cdot \rangle)$. At each iteration another  simple classifier is added to the model, minimizing a weighted loss-function. Alternatively, AdaBoost can be viewed as a variant of mirror-gradient-descent for the exponential loss \cite{Breiman98,FriedmanHastieTibshirani00}.  Empirically, it often achieves the best generalization performance when it is overparameterized and runs long after the  training error equals zero \cite{DruckerCortes96}.

However, a theoretical understanding of the generalization properties of AdaBoost, that explains this behaviour, is still missing. Early theoretical results on the generalization error of AdaBoost and other classification algorithms were based on margin-theory \cite{BartlettFreundLeeSchapire98,KoltchinskiiPanchenko02} and entropy bounds. In high-dimensional situations, where the dimension of the features and number of base classifiers is larger than the number of observations $n$, these become meaningless.  Another approach to explain the success of AdaBoost and other boosting algorithms is based on regularization through early stopping \cite{Jiang04, ZhangYu05,Buehlmann06}. However, by their nature these bounds can not explain generalization performance when the number of iterations grows large and the empirical training error equals zero. In the population setting \cite{Breiman04} showed that the generalization  risk of AdaBoost converges to the Bayes risk, but this does also not indicate any performance guarantees for finite data. 

A more thorough understanding has developed through the lens of optimisation. Already in \cite{FreundSchapire97} it was shown that each iteration of AdaBoost decreases the training error. Moreover, in \cite{Breiman98,FriedmanHastieTibshirani00}, a close connection to the exponential loss was pointed out and studied. Building on these results, \cite{RaetschOnodaMueller01,ZhangYu05,RossetZhuHastie04} discovered that overparameterized AdaBoost, when run long enough with vanishing learning rate $\epsilon$ (see Algorithm~\ref{alg:main}), has $\ell_1$-margin converging to the maximal $\ell_1$-margin. In particular, this means that given training data $(X_i,y_i)_{i=1}^n$, where the $y_i$ are binary and the $X_i$ are  $p$-dimensional feature vectors, and where $\tilde \beta_T$ denotes the output of AdaBoost with the canonical basis as simple classifiers, learning rate $\epsilon$ and run-time $T$, we have 
\begin{align} \label{def max margin}
  \min_{1 \leq i \leq n} \frac{ \langle y_iX_i, \tilde \beta_T\rangle }{\|\tilde \beta_T\|_1} \xrightarrow{\substack{T \rightarrow \infty\\ \epsilon \rightarrow 0}} \max_{\beta \neq 0} \min_{1 \leq i \leq n} \frac{\langle y_i  X_i, \beta \rangle}{\|\beta\|_1}=:\gamma,
\end{align}
provided that $\gamma$ is positive. The above holds universally for boosting algorithms that are derived from exponential type loss functions and various possible adaptive step-sizes. For these, general non-asymptotic bounds have been developed in  \cite{MukherjeeRudinSchapire13,Telgarsky13}. 

Any vector $\hat \beta$ that maximizes the right hand side in \eqref{def max margin} is 
proportional to an output of
\begin{align} \label{intro def estimator min l1}
   \hat \beta \in \argmin \left \{\left \| \beta \right \|_1 ~~\text{subject to}~~\min_{1 \leq i \leq n} y_i \langle X_i, \beta \rangle \geq 1 \right \}.
\end{align}
From the representation \eqref{intro def estimator min l1}, it can be seen that, if $\hat \beta$ is well-defined, then $\hat \beta$ interpolates the data  in the sense that $\langle X_i, \hat \beta \rangle$ and $y_i$ have matching signs for all $i$. Similarly, neural networks and random forests are typically massively overparameterized and trained until they interpolate the data.  Empirically, it has been shown that this can lead to smaller test errors compared to algorithms with a smaller number of parameters \cite{WynerOlsonBleichMease17,BelkinHsuMaMandal19}. Statistical learning theory based on empirical risk minimization techniques and entropy bounds can not explain these empirical findings and a mathematical understanding of this phenomenon has only began to form in recent years. The prevalent explanation so far is that, similar as in \eqref{def max margin}, these algorithms approximate max-margin solutions \cite{Telgarsky13,SoudryHofferNacsonGunasekarSrebro18,JiTelgarsky19}. As in  \eqref{intro def estimator min l1}, an algorithm that maximises a margin is equivalent to a minimum-norm-interpolator. It is then argued that this leads to implicit regularization and hence a good fit. 

The study of minimum-norm interpolating algorithms has mainly been investigated in three settings so far. The first line of research has focused on a random matrix regime where the number of data points and parameters are proportional. Here precise asymptotic results can be obtained, see for instance \cite{MontanariRuanSohnYan20,DengKammounThrampoulidis20} for max-$\ell_2$-margin interpolation, \cite{LiangSur20} for max-$\ell_1$-margin interpolation and consequently AdaBoost, \cite{MeiMontanari19} for 2-layer-neural networks in regression and \cite{HastieMontanariRossetTibshirani19} for minimum-$\ell_2$-norm linear regression. However, these results do not exploit possible  low-dimensional structure such as sparsity and they also require a large enough, constant, noise-level, leading to inconsistent estimators. 

Another line of work has focused on non-asymptotic results in an Euclidean setting with features that have a covariance matrix  with decaying eigenvalues, see \cite{MuthukumarNarangSubramanianBelkinHsuSahai20} for classification with support-vector machines (SVM) and \cite{BartlettLongLugosiTsigler20,ChinotLerasle20} for linear regression.  These results rely crucially on Euclidean geometry, which gives explicit formulas for the estimators under consideration, and also do not lead to improved convergence rates in the presence of low-dimensional intrinsic structure.

A third line of work originates in the compressed sensing literature. Here low-dimensional intrinsic structure and often small noise levels, including adversarial noise,  are studied. 
Small noise might be a realistic assumption for many classification data sets from the machine learning literature. On  data sets such as CIFAR-10 \cite{ForetNeyshaburKleinerMobahi21} or MNIST \cite{WanZeilerZhangLecunFergus13} state of the art algorithms achieve  test errors smaller than $0.5\%$, implying that the proportion of flipped labels in the full data set is also small.
On the theoretical side, pioneering work by Wojtaszczyk \cite{Wojtaszczyk10} has shown that minimum-$\ell_1$-norm interpolation, introduced by \cite{ChenDonohoSaunders98} as {\em basis pursuit},  is robust to small, adversarial errors in sparse linear regression. This has recently been extended to other minimum-norm-solutions in linear regression \cite{ChinotLofflervandeGeer20}, phase-retrieval \cite{KrahmerKuemmerleMelnyk20} and heavy-tailed features in sparse linear regression \cite{KrahmerKuemmerleRauhut18}. 

Sparsity enables to model the possibility that only few variables are sufficient to predict well and allows for easier model interpretation. 
In binary classification, a sparse model with adversarial errors can be described by having access to a dataset $ \mathcal D_n = (X_i,y_i)_{i=1}^n$, where the features $(X_i)$'s are i.i.d random vectors in $\mathbb{R}^p$ distributed as $X$ and $X = (x_1,\cdots,x_p)$ where $x_j \overset{i.i.d.}{\thicksim} \mu$ for some distribution $\mu$. For $s>0$ we are given an effectively $s$-sparse $\beta^* \in \mathcal{S}^{p-1}$, i.e. a vector $\beta^*$ such that $\|\beta^*\|_2=1$ and $\| \beta^* \|_1 \leq \sqrt s$. Finally, for a set $\mathcal{O} \subset [n]$ we have
\begin{align}
    y_i = \begin{cases} \sgn{\langle X_i, \beta^*\rangle} ~~~~~~~i \notin \mathcal{O} \\
    -\sgn{\langle X_i, \beta^*\rangle}~~~~~~i \in \mathcal{O}. \end{cases} \label{intro model}
\end{align}
The set $\mathcal{O}$ contains the indices of the data that is labeled incorrectly.  We do not impose any modelling assumptions on $\mathcal{O}$. $\mathcal{O}$ may be random, deterministic or adversarially depend on all features $(X_i)_{i=1}^n$, but we impose that the proportion of flipped labels is small such that $|\mathcal{O}|=o(n)$. 
In the applied mathematics literature, this model is called {\em robust one-bit compressed sensing} and in learning theory {\em agnostic learning of (sparse) half-spaces}. 

As far as we know, there are no theoretical  results for estimators that necessarily  interpolate in the model \eqref{intro model} when $\mathcal{O} \neq \emptyset$. In the noiseless case where $\mathcal{O}=\emptyset$ and for standard Gaussian measurements,
 \cite{PlanVershynin13CPAM} have proposed and investigated an interpolating estimator, similarly defined as \eqref{intro def estimator min l1} with the minimum replaced by an average and an additional matching sign constraint. In particular, they showed that this estimator is able to consistently estimate the direction of $\beta^*$. 
 
 Subsequent work where the model \eqref{intro model} and variants of it were considered, has  focused on regularized estimators in order to adapt to noise or to generalize the required assumptions. First results for the model \eqref{intro model} and a computable algorithm were obtained by \cite{PlanVershynin13}, where a convex program was proposed and investigated. If $\beta^*$ is exactly $s$-sparse, i.e. it has at most $s$ non-zero entries, the attainable convergence rates can be improved and faster performance guarantees were obtained by \cite{JacquesLaskaBoufounosBaraniuk13,ZhangYiJin14,AwasthiBalcanHaghtalabZhang16}. Further works investigated non-Gaussian measurements \cite{AiLapanowskiPlanVershynin14}, active learning \cite{AwasthiBalcanHaghtalabZhang16,Zhang18,ZhangShenAwasthi20}, overcomplete dictionaries \cite{BaraniukFoucartNeedell18} and random shifts of $\langle X_i, \beta^*\rangle$, called {\em dithering}, \cite{KnudsonSaabWard16,DirksenMendelson21}. 
 
In this paper, we consider the performance of AdaBoost in the overparameterized regime with small and adversarial noise. We additionally assume that $\beta^*$ is effectively $s$-sparse. We leverage the relation in \eqref{def max margin} between AdaBoost and the max-$\ell_1$-margin estimator \eqref{intro def estimator min l1} to analyze AdaBoost (as described below in Algorithm \ref{alg:main}). In particular, we show that when $p \gtrsim n$ and the feature vectors fulfill a weak moment assumption and an anti-concentration assumption, then with high probability AdaBoost has vanishing prediction error, provided $\left(s+ |\mathcal{O}| \right)\log^c(p)=o(n)$ for some constant $c>0$
and sufficiently many iterations $T=O(n)$ of AdaBoost are performed. Moreover, when the features are Gaussian or student-t (with at least $c\log(p)$ degrees of freedom) distributed, we obtain prediction and Euclidean estimation error bounds that scale as
\begin{align} \label{intro rate 1/3}
  \left (  \frac{\left(s+|\mathcal{O}| \right) \log^c(p)}{n} \right )^{1/3},
\end{align}
which is among the best available convergence guarantees in the one-bit compressed sensing literature so far.

These results are, as far as we know, the first non-asymptotic guarantee for  overparameterized and data interpolating AdaBoost in a sparse and noisy setting. 
We illustrate our theory with Laplace, uniform, Gaussian and student-t (with at least $c\log(p)$ degrees of freedom) distributed features. 
Moreover, our main result also explains why interpolating data can perform well in the presence of adversarial noise, providing an explanation to the question raised in~\cite{ZhangBengioHardtRechtVinyals17}. Numerical experiments complement our theoretical results. \\
Compared to \cite{LiangSur20} we consider a completely different regime. In their setting sparsity can not be assumed and the noise level can neither be adversarial nor small. Hence, in \cite{LiangSur20} consistent estimation of the direction of $\beta^*$ is impossible and the resulting generalization error is close to $1/2$ when $p$ is large compared to $n$.

\subsection*{Notation}
The Euclidean norm is denoted by  $\|\cdot \|_2$ and induced by the inner product $\langle \cdot , \cdot \rangle$, $\| \cdot \|_1$ denotes the $\ell_1$-norm  and $\| \cdot \|_\infty$ the $\ell_\infty$-norm for both vectors and matrices. $B_1^p$ and $B_2^p$ denote the unit $\ell_1$-ball and $\ell_2$-ball in $\mathbb{R}^p$, respectively. In addition, we write $\mathcal{S}^{p-1}$ for the $p$-dimensional unit sphere. By $c$, we  denote a generic, strictly positive constant, that may change value from line to line. Moreover, for two sequences $a_{n}, b_n$ we write $a_n \lesssim b_n$ if  $a_n \leq c b_n~\forall n$. Similarly, $a_n \gtrsim b_n$ if $b_n \lesssim a_n$ and $a_n \asymp b_n$ if $a_n \lesssim b_n$ and $b_n \lesssim a_n$. When an assumption reads 'Suppose $a_n \lesssim b_n$' this means that we assume that for a small enough constant $c>0$ we have $a_n \leq c b_n~\forall n$. By $[p]$ we denote the enumeration $\{1, \dots, p\}$, by $\{e_j\}_{j \in [p]}$ the set of canonical basis vectors in $\mathbb{R}^p$ and by $X_i$ the $i$-th column of the matrix $\mathbb X=[X_1, \dots, X_n]$ of feature vectors. We denote the sign function, $\sgn{x}=\mathbf{1}(x >0)-\mathbf{1}(x<0)$.
Throughout this article, we use bold letters to denote random matrices, upper case letters for random vectors and lower case letters for random variables. For example we  write $\mathbb X = (X_i)_{i \in [n]} \in \mathbb R^{p \times n}$ and $X_i = (x_{i,1},\cdots,x_{i,p}) \in \mathbb R^p$. 

\section{Main results}
\subsection{Model and assumptions}
We consider a binary classification model, which allows for adversarial flips. In particular, we assume that we have access to a dataset $ ( y_i, X_i)_{i \in [n]}$. The $X_i$'s are i.i.d random vectors in $\mathbb{R}^p$ distributed as $X$ and $X= (x_1,\cdots,x_p)$, where  $x_j \overset{i.i.d.}{\thicksim} \mu$ for some distribution $\mu$ that is symmetric and has zero mean and unit variance. Assuming unit variance is not restrictive, as both the considered loss functions and the observed data are scaling invariant. The $y_i$'s are generated via 
\begin{align}
    y_i = \begin{cases} &\sgn{\langle X_i, \beta^*\rangle} ~~~~~~~i \notin \mathcal{O} \\
    &-\sgn{\langle X_i, \beta^* \rangle} ~~~~~i \in \mathcal{O}.\end{cases} \label{main def model}
\end{align}
The set $\mathcal{O} \subset [n]$ is the set of the indices of the mislabeled data. 
We assume that the fraction of flipped labels is asymptotically vanishing, $|\mathcal{O}|=o(n)$, 
but that $\mathcal{O}$ may be picked by an adversary and depend on the data. In particular, this includes parametric noise models such as logistic regression or additive Gaussian noise inside the sign-function above, as long as the variance of the noise decays to zero as $n$ goes to infinity. 
We assume that $\beta^* \in \mathcal{S}^{p-1}$ is effectively $s$-sparse, that is $\|\beta^*\|_1 \leq \sqrt{s}$.  \\
For stating our results we will treat all other parameters that do not depend on $p, n, s$ and $|\mathcal{O}|$ as fixed constants. Moreover, we always assume tacitly that $p \geq c n$, for a large enough constant $c>0.$

 We measure the accuracy of recovery by the prediction error 
\begin{align} \label{def pred error}
  d(\tilde \beta,\beta^*):=  \mathbb{P} \left ( \sgn{\langle X_{n+1}, \tilde \beta\rangle } \neq \sgn{\langle X_{n+1}, \beta^* \rangle}\bigg |(y_i,X_i)_{i \in [n]} \right ),
\end{align}
where $X_{n+1}$ is an independent copy of $X$. This is the quantity which is used empirically to measure the quality of classifiers such as neural networks on standard image benchmark data sets \cite{WanZeilerZhangLecunFergus13,ForetNeyshaburKleinerMobahi21}. 

We now formulate the three main assumptions used throughout this article. They describe the tail-behaviour and behaviour around zero of the features. 

For the tail-behaviour, the only assumption we make is a weak moment assumption of order $\log(p)$.
    \begin{definition} 
     A centered, scalar random variable $x$ fulfills the weak moment assumption (of order $log(p)$) with parameter 
$\zeta \geq 1/2$ if 
     $$
    \left( \mathbb E | x |^{q}  \right)^{1/q} \lesssim 
   q^{\zeta} \quad \forall ~ 1 \leq q \leq \max(1, \log(p)) .
    $$
For a matrix $\mathbb{X}$ or a vector $X$ we say that they satisfy the weak moment assumption if their entries satisfy the weak moment assumption. 
    \end{definition}
This assumption is weaker than commonly assumed sub-Gaussian or sub-exponential tail behaviour and allows for feature distributions with heavy-tails such as the student-t-distribution with  $c \log(p)$ degrees of freedom. 
Under the weak moment assumption we are able to control the $\ell_{\infty}$ norm of $X = (x_1,\cdots,x_p)$ composed of i.i.d random variables satisfying the weak moment assumption with polynomial deviation (see Proposition~\ref{control infty norm log moments}). Assuming sub-Gaussianity of the $x_j$'s does not lead to improvement of the convergence rates in our main results except for logarithmic factors. This is different to the theory developed in \cite{DirksenMendelson21} where the exponent in the obtained convergence results depends on whether the features are sub-Gaussian or heavy-tailed.  \\
The next two assumptions measure the behaviour of the feature distribution around zero. 

\begin{definition} \label{def anticoncentration} A random vector $X \in \mathbb{R}^p$ fulfills an anti-concentration assumption with parameter 
$\alpha \in (0,1]$ if
\begin{align} \label{def eq anticon}
    \sup_{\beta \in \mathcal{S}^{p-1}} \mathbb{P} (|\langle X, \beta \rangle |\leq \varepsilon) \lesssim  \varepsilon^{\alpha} ~\forall~ p^{-1} \leq \varepsilon \leq 1. 
\end{align}
We say that a matrix $\mathbb{X} \in \mathbb{R}^{p \times n}$ satisfies the anti-concentration assumption with $\alpha \in (0,1]$ if each column of $\mathbb{X}$ satisfies \eqref{def eq anticon}.
\end{definition}
Assuming that $\mathbb X$ satisfies an anti-concentration assumption will be a necessity for our results, as it ensures that for fixed vector $\beta$, the scalar $|\langle X_i, \beta\rangle|$ is not too close to zero for too many indices $i \in [n]$. This in turn would lead to a tiny $\ell_1$-margin and many discontinuities of $\sgn{\langle X_i, \beta \rangle}$ at $\beta$, rendering it impossible to prove uniform results.

Similar anti-concentration assumptions were previously introduced in the learning theory literature in the non-sparse setting, see e.g. \cite{BalcanZhang17,DiakonikolasTzamosKontonisZarifis,FreiCaoGu21}, and were shown to be satisfied by isotropic log-concave distributions \cite{BalcanZhang17} via an uniform upper bound for the density of $\langle \beta,X \rangle$.

The next assumption is an optional counterpart to Definition \ref{def anticoncentration} and leads to improved convergence rates if it is satisfied. 
\begin{definition} \label{def small deviation} A random vector $X \in \mathbb{R}^p$ fulfills a small deviation assumption with parameter $\theta >0$ if
\begin{align} \label{def eq small dev}
    \inf_{\beta \in \mathcal{S}^{p-1}} \mathbb{P} (|\langle X, \beta \rangle |\leq \varepsilon) \gtrsim 
    \varepsilon^{\theta}  ~\forall~ p^{-1} \leq \varepsilon\leq 1. 
\end{align}
We say that a matrix $\mathbb{X} \in \mathbb{R}^{p \times n}$ satisfies the small deviation assumption with parameter $\theta >0$ if each column of $\mathbb{X}$ satisfies \eqref{def eq small dev}.
\end{definition}
In \cite{DiakonikolasTzamosKontonisZarifis} a stronger small-deviation assumption was formulated, assuming an uniform lower bound on the density of two-dimensional projections of $X$. This property is satisfied by isotropic log-concave distributions \cite{BalcanZhang17} and implies \eqref{def eq small dev} with $\theta=1$.

\subsection{Main results}
\subsubsection{AdaBoost, max $\ell_1$-margin and a bound in terms of the margin}

AdaBoost, proposed by Freund and Schapire \cite{FreundSchapire97}, is an algorithm where an additive model for an unnormalized version of $\beta^*$ is built by iteratively adding weak classifiers to the model. To facilitate our  analysis, we assume that the features $X_i$'s are i.i.d. distributed and that the weak classifiers can be identified with the standard basis vectors in $\mathbb{R}^p$. We consider AdaBoost as described in Algorithm  \ref{alg:main}. The main difference to the original proposal by \cite{FreundSchapire97} consists of the choice of the step-size $\alpha_t$, which is obtained by minimizing a quadratic upper bound for the loss-function at each step \cite{Telgarsky13}. 
\begin{algorithm}[h]
\SetAlgoLined
\KwIn{Binary data  $(y_{i})_{i \in [n]}$,  features $\mathbb{X}=(X_i)_{i \in [n]}$, run-time $T$, learning rate $\epsilon$ }
\KwOut{Vector $\tilde \beta_T \in \mathbb{R}^p$}
\nl Initialize $\tilde \beta_{0,i}=0$ and rescale features $\mathbb{X}=\mathbb{X}/\|\mathbb{X}\|_\infty$\\
 \nl For $t=1, \dots T$ repeat
 \begin{itemize}
     \item Update weights $w_{t,i}
 =\frac{ \exp(-y_i \langle X_i, \tilde \beta_{t-1} \rangle )}{\sum_{j=1}^n \exp(-y_j \langle X_j, \tilde \beta_{t-1} \rangle )} $, $i=1,\dots,n$
     \item  Select coordinate:~$v_{t}=\argmax_{v \in \{ e_j \}_{j=1}^p } | \sum_{i=1}^n  w_{t,i} y_i \langle X_i, v \rangle |$
     \item Compute adaptive stepsize $\alpha_t = \sum_{i=1}^n w_{t,i} y_i \langle X_i, v_{t} \rangle $
     \item Update $\tilde \beta_{t}=\tilde \beta_{t-1}+\epsilon \alpha_t v_{t}$
    
 \end{itemize}
 \nl Return $\tilde \beta_{T}$ 
\caption{AdaBoost for binary classification}\label{alg:main}
\end{algorithm}

Alternative to the interpretation by Freund and Schapire \cite{FreundSchapire97}, AdaBoost can be viewed as a form of mirror gradient descent on the exponential loss-function \cite{Breiman98,FriedmanHastieTibshirani00}. It  is thus natural to expect that it converges to the infimum of the loss-function and eventually interpolates the labels if possible. 
In fact, a stronger statement holds: As described in \eqref{def max margin}, 
AdaBoost with infinitesimally small learning rate and a growing number of iterations $T$ converges to a solution that maximizes the $\ell_1$-margin \cite{RaetschOnodaMueller01,ZhangYu05,Telgarsky13}. 

This holds even non-asymptotically \cite{Telgarsky13} for many variants of AdaBoost and includes both the exponential and logistic loss-function as well as various choices of adaptive stepsizes $\alpha_t$, for instance logarithmic as originally proposed by \cite{FreundSchapire97}, line search \cite{SchapireSinger99,ZhangYu05} or quadratic as in Algorithm \ref{alg:main}.

To present non-asymptotic results and to ensure that our theory can potentially be applied to other variants of AdaBoost, we introduce the following definition of an approximation of the largest $\ell_1$-margin: We say that $\tilde \beta \in \mathbb{R}^p$ provides an approximation of the max $\ell_1$-margin if
\begin{align} \label{def 1-eps max l1 margin}
    \min_{1 \leq i \leq n} \frac{y_i \langle X_i, \tilde \beta \rangle}{\|\tilde \beta\|_1} \geq \frac{1}{2}\max_{\beta \neq 0} \min_{1 \leq i \leq n}\frac{ y_i \langle X_i, \beta \rangle}{ \|\beta\|_1}=:\frac{\gamma}{2}, 
\end{align}
The quantity $\gamma$ is called the max $\ell_1$-margin. Moreover, the factor $1/2$ can be substituted by any other positive constant smaller than one.

The following theorem gives a bound for the prediction error for any $\tilde \beta$ that provides an approximation of the max $\ell_1$-margin. The bound itself depends on the max $\ell_1$-margin $\gamma$. If the features fulfill an additional small deviation assumption, we obtain improved convergence rates. 

\begin{theorem} \label{theorem error bound dependance margin}
Assume $p \gtrsim n$ and that  $\mathbb{X}=(X_i)_{i \in [n]}$ has i.i.d. zero mean and unit variance entries and satisfies the weak moment assumption with $\zeta \geq 1/2$ and the anti-concentration assumption  with 
$\alpha \in (0,1]$. 
Let $\tilde \beta$ be an approximation of the margin and suppose that $\tilde \beta$  satisfies with probability at least $1-t$ that $\gamma \geq \gamma_0$. Define
$$
 \eta =  \left( \frac{\log^{2\zeta+1}(p)\log(n)}{\gamma_0^2 n}   \right)^{\frac{1}{2+\alpha}},
$$
and assume that $\eta \lesssim 1$. Moreover, assume that 
$
| \mathcal O | \lesssim \eta^{\alpha}n.
$
Then with probability at least $1-cp^{-1}-t$ we have that
$$
d \left({\tilde \beta}, \beta^* \right) \lesssim \eta^{\alpha}.
$$
Moreover, if $\mathbb{X}$ satisfies a small deviation assumption with  $\theta >0$ and 
$
| \mathcal O | \lesssim \eta^{\alpha \left ( 1+\frac{2}{\theta}\right)}n,
$
then, with probability at least $1-c p^{-1}-t$, we have that
$$
d \left({\tilde \beta}, \beta^* \right) \lesssim \eta^{\alpha\left (1+\frac{2}{\theta} \right )}. 
$$
\end{theorem}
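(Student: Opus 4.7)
The plan is to translate the approximate-margin hypothesis on $\tilde\beta$ into a uniform empirical-process inequality for a Lipschitz margin surrogate over the unit $\ell_1$-ball, and to obtain the advertised rate through a Bernstein--Talagrand balance between Rademacher complexity and an anti-concentration variance bound. I would start by replacing $\tilde\beta$ with $\bar\beta := \tilde\beta/\|\tilde\beta\|_1$: this normalisation preserves sign predictions, enforces $\|\bar\beta\|_1 = 1$, and on the event of probability at least $1-t$ gives the margin bound $\min_i y_i\langle X_i,\bar\beta\rangle \geq \gamma/2 \geq \gamma_0/2$. Writing $y^*_i := \sgn{\langle X_i,\beta^*\rangle}$ for the (unobserved) clean labels, so that $y^*_i = y_i$ on $\mathcal O^c$ and $y^*_i = -y_i$ on $\mathcal O$, one has $d(\tilde\beta,\beta^*) = \p(y^*_{n+1}\langle X_{n+1},\bar\beta\rangle \leq 0 \mid \mathcal D_n)$. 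Introduce a Lipschitz ramp $\phi_\varepsilon$ satisfying $\mathbf{1}\{z \leq 0\} \leq \phi_\varepsilon(z) \leq \mathbf{1}\{z \leq \varepsilon\}$ of slope $1/\varepsilon$; choosing $\varepsilon \leq \gamma_0/2$ forces $\phi_\varepsilon(y^*_i\langle X_i,\bar\beta\rangle) = 0$ for every $i\in\mathcal O^c$, so that $n^{-1}\sum_i \phi_\varepsilon(y^*_i\langle X_i,\bar\beta\rangle) \leq |\mathcal O|/n$ and hence $d(\tilde\beta,\beta^*) \leq |\mathcal O|/n + \sup_{\beta \in B_1^p}\bigl(\E\phi_\varepsilon(y^*\langle X,\beta\rangle) - P_n\phi_\varepsilon(y^*\langle X,\beta\rangle)\bigr)$.

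Next, I would control the uniform empirical-process supremum by a Talagrand inequality in its Bernstein form. Symmetrisation and the Ledoux--Talagrand contraction principle reduce the expected supremum to $\varepsilon^{-1}\,\E\|n^{-1}\sum_i r_i y^*_i X_i\|_\infty$, and a maximal inequality combined with the $\log^\zeta(p)$ control of $\|\mathbb X\|_\infty$ furnished by the weak moment assumption (via the $\ell_\infty$-norm proposition cited in the excerpt) then yields $\E\sup \lesssim \log^{\zeta+1/2}(p)/(\varepsilon\sqrt n)$; this is the step where the $\log^{2\zeta+1}(p)$ factor entering $\eta^{2+\alpha}$ arises after squaring. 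The anti-concentration hypothesis supplies the variance bound $\sup_\beta \mathrm{Var}(\phi_\varepsilon(y^*\langle X,\beta\rangle)) \lesssim (\varepsilon/\|\beta\|_2)^\alpha$, and a peeling decomposition of $B_1^p$ into $\ell_2$-shells, combined with Talagrand's variance form, propagates this as a multiplicative contribution to the fluctuation rather than as a crude additive $\varepsilon^\alpha$ bias.

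Putting these pieces together with the choice $\varepsilon \asymp \gamma_0$ produces a fixed-point inequality whose solution is exactly the value of $\eta$ stated in the theorem; under the hypothesis $|\mathcal O| \lesssim \eta^\alpha n$ this gives $d(\tilde\beta,\beta^*) \lesssim \eta^\alpha$ on an event of probability at least $1-cp^{-1}-t$. The small-deviation refinement follows by the same scheme but with a tighter peeling: the two-sided matching lower bound $\p(|\langle X,\beta\rangle|\leq\varepsilon)\gtrsim (\varepsilon/\|\beta\|_2)^\theta$ forces any $\bar\beta$ consistent with the empirical-margin constraint to concentrate in a substantially smaller $\ell_2$-shell, effectively replacing the anti-concentration exponent $\alpha$ by $\alpha(1+2/\theta)$ in the final balance. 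The main technical obstacle I foresee is obtaining the precise exponent $1/(2+\alpha)$: naive optimisation of the two deterministic terms $\varepsilon^\alpha + \log^{\zeta+1/2}(p)/(\varepsilon\sqrt n)$ in $\varepsilon$ yields only $1/(1+\alpha)$, so producing the correct $1/(2+\alpha)$ requires the anti-concentration variance to enter multiplicatively through the sharp Bernstein form of Talagrand's inequality together with a careful $\ell_2$-shell peeling of the $\ell_1$-ball.
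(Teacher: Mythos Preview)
Your approach differs substantially from the paper's, and the gap you yourself flag at the end is real. The paper does \emph{not} use a Lipschitz margin surrogate, Talagrand's inequality, or $\ell_2$-peeling; it proceeds in two separate steps, and the first is the key idea you are missing.

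\textbf{Step 1 (norm-ratio bound).} After rescaling $\bar\beta := 2\tilde\beta/(\gamma_0\|\tilde\beta\|_1)$ so that $\|\bar\beta\|_1 \leq r_n := 2/\gamma_0$ and $\min_i y_i\langle X_i,\bar\beta\rangle \geq 1$, the paper proves directly that any such $\bar\beta$ must satisfy $\|\bar\beta\|_2 \geq 1/2$, i.e.\ $\|\bar\beta\|_1/\|\bar\beta\|_2 \lesssim r_n$. This is argued by contradiction: if $\|\bar\beta\|_2 \leq 1/2$, a Rademacher complexity bound shows $\sup_{\beta \in r_n B_1^p \cap (1/2)B_2^p} n^{-1}\sum_i |\langle X_i,\beta\rangle| < 1$, violating the margin constraint. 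Under the small deviation assumption a sharper ratio bound $r_n/\tau_n$ with $\tau_n \asymp (n/(r_n^2\log^{2\zeta+1}(p)\log n))^{1/\theta}$ is obtained via a Maurey discretization. You never establish any lower bound on $\|\bar\beta\|_2$; without it your $\ell_2$-peeling cannot locate $\bar\beta$ in a useful shell.

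\textbf{Step 2 (tessellation).} With $a := r_n$ (respectively $r_n/\tau_n$), the paper proves a uniform sparse hyperplane tessellation over $aB_1^p \cap \mathcal S^{p-1}$: if $d(\beta,\beta^*) \geq c\eta^\alpha$ then $n^{-1}\sum_i \mathbf 1\{\sgn{\langle X_i,\beta\rangle} \neq \sgn{\langle X_i,\beta^*\rangle}\} \gtrsim \eta^\alpha$. The exponent $1/(2+\alpha)$ arises from a discretization-plus-union-bound balance, not from a Bernstein fluctuation term: Maurey's set $\mathcal Z_m$ with $m \asymp a^2\log^{2\zeta}(p)\log(n)/\eta^2$ approximates each $\beta$ to accuracy $\eta$ in $\max_i|\langle X_i,\cdot\rangle|$; anti-concentration plus Bernstein give failure probability $\exp(-cn\eta^\alpha)$ per net point; the union bound over $|\mathcal Z_m| \leq (2p+1)^m$ forces $m\log p \lesssim n\eta^\alpha$, yielding exactly $\eta^{2+\alpha} \asymp a^2\log^{2\zeta+1}(p)\log(n)/n$. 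Your contraction route trades this $1/\eta^2$ entropy exponent for a $1/\varepsilon$ Lipschitz factor, which is precisely why your balance lands at $1/(1+\alpha)$ instead.

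A further problem: your variance claim $\mathrm{Var}\,\phi_\varepsilon(y^*\langle X,\beta\rangle) \lesssim (\varepsilon/\|\beta\|_2)^\alpha$ is not correct. Since $y^* = \sgn{\langle X,\beta^*\rangle}$, one has $\mathbb E\,\phi_\varepsilon(y^*\langle X,\beta\rangle) \geq d(\beta,\beta^*)$, so the second moment is at best $d(\beta,\beta^*) + \mathbb P(|\langle X,\beta\rangle| \leq \varepsilon)$; anti-concentration controls only the second term. Turning this into the claimed multiplicative gain would require a genuine localization argument, and even then it would still need Step~1 to constrain the class.
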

The proof of Theorem \ref{theorem error bound dependance margin} involves two main arguments: a bound for the ratio $\|\tilde \beta\|_1/\|\tilde \beta\|_2$ in terms of the max $\ell_1$-margin and a sparse hyperplane tesselation result that adapts a proof technique introduced by \cite{DirksenMendelson21}. For the bound on the ratio  we argue by contradiction and show that with high probability no $\beta$ can simultaneously approximate the margin and have small Euclidean norm. If the small deviation assumption is satisfied we obtain an improved bound on the ratio by using a more involved discretisation argument via Maurey's emirical method \cite{Carl85,RudelsonVershynin08,GuedonLecuePajor13}. For the sparse hyperplane tesselation result, we argue similarly as \cite{DirksenMendelson21}, but use again Maurey's empirical method instead of their net argument.  Compared to a discretisation argument via nets (as in \cite{DirksenMendelson21,PlanVershynin13}) this has the advantage that we are able to deal with features that only fulfill the weak moment assumption, while still retaining the same rate (up to logarithmic factors) as in the sub-Gaussian case. By contrast, the obtained convergence rates in \cite{DirksenMendelson21} depend on whether the features are sub-Gaussian or not.

The following lemma shows that AdaBoost, as described in Algorithm \ref{alg:main}, provides an approximation of the max $\ell_1$-margin, when it is run long enough. The proof is a simple adaptation of results by \cite{Telgarsky13} to our setting. 
\begin{lemma}  \label{lemma upper bound iterations}
Consider the AdaBoost Algorithm \ref{alg:main} and suppose that $p \gtrsim n$ and that $\mathbb{X}=(X_i)_{i \in [n]}$ satisfies the weak moment assumption with $\zeta \geq 1/2$. Suppose that $\gamma>0$, that the learning rate $\epsilon$ satisfies $\epsilon \leq 1/6$ and that $$T \gtrsim \log^{2\zeta+1}(np) /(\epsilon^2  \gamma^2).$$  Then, the output of Algorithm \ref{alg:main} provides an  approximation of the max $\ell_1$-margin with probability at least $1-p^{-1}$. 
\end{lemma}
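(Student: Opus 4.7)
The plan is to reduce the lemma to an application of Telgarsky's non-asymptotic margin-convergence analysis for AdaBoost \cite{Telgarsky13}, with the weak moment assumption entering only through a high-probability bound on $\|\mathbb X\|_\infty$. The key observation is that the normalized $\ell_1$-margin $\min_i y_i\langle X_i,\beta\rangle/\|\beta\|_1$ is scale-invariant in the feature matrix; rescaling by $\|\mathbb X\|_\infty$ in step~1 of Algorithm~\ref{alg:main} divides the max-$\ell_1$-margin by $\|\mathbb X\|_\infty$, but the conclusion \eqref{def 1-eps max l1 margin} is invariant. Hence the statement on the original features is equivalent to the same statement on the rescaled features with the rescaled max-margin $\gamma' := \gamma/\|\mathbb X\|_\infty$, only the iteration count to reach it changes.

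The first step is to invoke Proposition~\ref{control infty norm log moments} to obtain $\|\mathbb X\|_\infty \lesssim \log^{\zeta}(np)$ with probability at least $1-p^{-1}$. On this event the rescaled features have entries in $[-1,1]$ and one has $\gamma' \gtrsim \gamma/\log^{\zeta}(np)$. This places us in the standard bounded-feature setting for which Telgarsky's analysis is stated.

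The second step is the Telgarsky-style argument on the rescaled features. Since under the rescaling the greedy step size $|\alpha_t|\leq 1$, the quadratic upper bound on $e^{-x}$ (valid thanks to $\epsilon\leq 1/6$) yields the per-iteration geometric decrease
\[
L_t \leq L_{t-1}\bigl(1 - c\epsilon^2 \alpha_t^2\bigr)\leq L_{t-1}\bigl(1 - c\epsilon^2 (\gamma')^2\bigr),
\]
where $L_t=\sum_i\exp(-y_i\langle X_i^{\mathrm{resc}},\tilde\beta_t\rangle)$ and the second inequality uses that the greedy coordinate $v_t$ forces $|\alpha_t|\geq \gamma'$ by the LP duality underlying the margin. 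Iterating from $L_0=n$ gives $L_T\leq n\exp(-c\epsilon^2 (\gamma')^2 T)$. Combining with the elementary bound $L_T\geq \exp(-\min_i y_i\langle X_i^{\mathrm{resc}},\tilde\beta_T\rangle)$ and the trivial norm estimate $\|\tilde\beta_T\|_1\leq \epsilon\sum_{t\leq T}|\alpha_t|\leq \epsilon T$ produces a normalized-margin lower bound of at least $\gamma'/2$ with respect to the rescaled features as soon as $T\gtrsim \log(n)/(\epsilon^2(\gamma')^2)$.

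Finally, plugging in $\gamma'=\gamma/\|\mathbb X\|_\infty$ and $\|\mathbb X\|_\infty\lesssim\log^{\zeta}(np)$ converts the iteration requirement into $T\gtrsim \log^{2\zeta+1}(np)/(\epsilon^2\gamma^2)$ (absorbing $\log(n)$ into $\log(np)$), and by scale-invariance the rescaled margin bound translates back to \eqref{def 1-eps max l1 margin} for the original features. I do not expect a genuine obstacle beyond careful bookkeeping of the rescaling factor through Telgarsky's argument; the only distinctive ingredient is Proposition~\ref{control infty norm log moments}, which lets the weak moment assumption substitute for the usual boundedness hypothesis in Telgarsky's setup at the cost of the extra $\log^{2\zeta}(np)$ factor in the iteration count.
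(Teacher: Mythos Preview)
Your overall strategy is exactly the paper's: use scale-invariance of the normalized margin to pass to the rescaled features, control $\|\mathbb X\|_\infty$ via the weak-moment bound (this is the only place the probability $1-p^{-1}$ and the extra $\log^{2\zeta}(np)$ enter), and then run a Telgarsky-type argument on features with entries in $[-1,1]$. The gap is in the arithmetic of that second step.

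With the adaptive step $\epsilon\alpha_t v_t$ the one-step decrease is $L_t\le L_{t-1}\bigl(1-c\,\epsilon\,\alpha_t^2\bigr)$, not $\bigl(1-c\,\epsilon^2\alpha_t^2\bigr)$: the linear term in the Taylor expansion of the exponential loss is $-\epsilon\alpha_t^2$ and the quadratic term is $O(\epsilon^2\alpha_t^2)$. More importantly, combining $L_T\ge\exp\bigl(-\min_i y_i\langle X_i,\tilde\beta_T\rangle\bigr)$ with the crude bound $\|\tilde\beta_T\|_1\le\epsilon T$ only yields normalized margin $\gtrsim\epsilon(\gamma')^2$ (or $(\gamma')^2$ after correcting the decrease), not $\gamma'/2$; the extra factor of $\gamma'$ is lost because you replaced $\sum_t\alpha_t^2$ by $T(\gamma')^2$ and $\sum_t|\alpha_t|$ by $T$ separately. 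The fix is to keep the sums together: with $L_T\le n\exp\bigl(-c\epsilon\sum_t\alpha_t^2\bigr)$ and $\|\tilde\beta_T\|_1\le\epsilon\sum_t|\alpha_t|$, the normalized margin is at least
\[
c\,\frac{\sum_t\alpha_t^2}{\sum_t|\alpha_t|}-\frac{\log n}{\epsilon\sum_t|\alpha_t|}\;\ge\; c\gamma'-\frac{\log n}{\epsilon T\gamma'},
\]
using $\alpha_t^2\ge\gamma'|\alpha_t|$ for the ratio and $\sum_t|\alpha_t|\ge T\gamma'$ for the remainder; this gives $\gamma'/2$ once $T\gtrsim\log(n)/(\epsilon(\gamma')^2)$. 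The paper reaches the same conclusion by a slightly different packaging, bounding $\sum_i\mathbf 1\{y_i\langle X_i,\tilde\beta_T\rangle\le x\|\tilde\beta_T\|_1\}\le e^{x\|\tilde\beta_T\|_1}n\ell(\tilde\beta_T)$ and showing this is $<1$ at $x=\gamma_R/2$, which keeps $\sum_t|\alpha_t|$ common to both exponents.
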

Hence, both for algorithmic (Lemma~\ref{lemma upper bound iterations}) as well as recovery guarantees (Theorem~\ref{theorem error bound dependance margin}), it is necessary to obtain a lower bound on the max $\ell_1$-margin $\gamma$. 

\subsubsection{A bound for the max $\ell_1$-margin}
In this section, we obtain a lower on the max $\ell_1$-margin $\gamma$, holding with large probability. 
\begin{theorem} \label{theorem lower bound margin}
Assume that $p \gtrsim n$ and that $\mathbb{X}=(X_i)_{i \in [n]}$ has i.i.d. symmetric, zero mean and unit variance entries and satisfies the weak moment assumption with $\zeta \geq 1/2$ and the anti-concentration assumption  with 
$\alpha \in (0,1]$.  Then, we have with probability at least $1-cn^{-1}$ that 
\begin{align} \label{eq lower bound margin} 
\gamma \gtrsim \left [ \frac{n}{\log(ep/n)} \left(  s + \frac{\log^{1+2\zeta}(n)| \mathcal O| }{\log(ep/n)} + \log^{1+ 2\zeta}(n)\right)^{\frac{\alpha}{2}} \right ]^{-\frac{1}{2+\alpha}}. 
\end{align}
\end{theorem}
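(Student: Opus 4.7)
By linear programming duality,
\begin{equation*}
\frac{1}{\gamma} = \min_{\beta \in \mathbb{R}^p} \|\beta\|_1 \text{ subject to } y_i \langle X_i, \beta\rangle \geq 1 \text{ for all } i \in [n],
\end{equation*}
so a lower bound on $\gamma$ reduces to constructing a feasible $\beta$ with controlled $\ell_1$-norm. The first candidate is $\beta = \beta^*/\tau$ for a threshold $\tau$ to be chosen: it handles the ``good'' indices $G = \{i \notin \mathcal{O} : |\langle X_i, \beta^*\rangle| \geq \tau\}$ at cost $\|\beta^*/\tau\|_1 \leq \sqrt{s}/\tau$. A correction $\beta_B$ must then be added to repair the constraints on the complementary ``bad'' set $B = \mathcal{O} \cup \{i \notin \mathcal{O} : |\langle X_i,\beta^*\rangle| < \tau\}$.

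\textbf{Controlling the bad set.} The anti-concentration assumption gives $\mathbb{P}(|\langle X_i,\beta^*\rangle|\leq\tau)\lesssim\tau^\alpha$, and a Bernstein-type concentration inequality applied to the bounded indicators $\mathbf{1}(|\langle X_i,\beta^*\rangle|\leq\tau)$ yields $|B|\lesssim n\tau^\alpha+|\mathcal{O}|+\log n$ with probability at least $1 - cn^{-1}$. The weak moment assumption applied to the scalar $\langle X_i, \beta^*\rangle$, together with a union bound, gives $\max_i|\langle X_i,\beta^*\rangle|\lesssim\log^\zeta n$. Consequently, the residual constraint on $B$ that the correction $\beta_B$ must satisfy is $y_i\langle X_i,\beta_B\rangle\geq\kappa$ with $\kappa = 1 + C\log^\zeta(n)/\tau$, where the second term handles the worst-case contribution of $\beta^*/\tau$ on $\mathcal{O}$.

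\textbf{Correction and optimization.} Existence of $\beta_B$ with $\|\beta_B\|_1\lesssim \kappa\sqrt{|B|/\log(ep/|B|)}$ is itself a max-$\ell_1$-margin guarantee on the smaller sub-sample indexed by $B$; I would establish it by Maurey's empirical method, constructing $\beta_B$ as a random thresholded convex combination of $\{y_iX_i\}_{i\in B}$ and controlling the resulting $\ell_\infty$-error using only the $\log(p)$-order moments of the entries of $\mathbb{X}$, in the same spirit as the sparse hyperplane tesselation step of Theorem~\ref{theorem error bound dependance margin}. Combining the two pieces yields
\begin{equation*}
\frac{1}{\gamma}\lesssim\frac{\sqrt{s}}{\tau}+\Bigl(1+\frac{\log^\zeta(n)}{\tau}\Bigr)\sqrt{\frac{n\tau^\alpha+|\mathcal{O}|+\log n}{\log(ep/n)}},
\end{equation*}
and optimizing over $\tau$ reproduces the exponent $1/(2+\alpha)$ together with the effective size $S = s + |\mathcal{O}|\log^{1+2\zeta}(n)/\log(ep/n)+\log^{1+2\zeta}(n)$; the $\log^{1+2\zeta}(n)$ factor arises from squaring the $\log^\zeta(n)$ penalty inside $\kappa$ and an additional logarithmic slack from the concentration of $|B|$. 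The main obstacle is the construction of $\beta_B$ under only the weak moment assumption, because standard net arguments would incur a polynomial penalty in $p$, whereas the Maurey-type construction preserves the sub-Gaussian rate up to polylogarithmic factors.
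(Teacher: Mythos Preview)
Your high-level plan---use $\gamma^{-1}=\|\hat\beta\|_1$ and exhibit a feasible point of the form $\beta^*/\tau+(\text{correction})$---matches the paper exactly, as does the use of anti-concentration to bound the bad set. The construction of the correction $\beta_B$, however, has two real gaps.

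\textbf{Feasibility on the good set.} Your $\beta_B$ is designed so that $y_i\langle X_i,\beta_B\rangle\geq\kappa$ for $i\in B$ only, but adding it to $\beta^*/\tau$ may destroy feasibility on $G$: for indices with $|\langle X_i,\beta^*\rangle|$ barely above $\tau$ you need $y_i\langle X_i,\beta_B\rangle\geq 1-|\langle X_i,\beta^*\rangle|/\tau$, a constraint over the whole sample that you never address. The paper avoids this entirely by imposing \emph{equality} constraints on all $n$ indices: it sets $\hat\nu=\argmin\{\|\beta\|_1:\mathbb X^T\beta=Z\}$ with $Z$ designed via a lifting function $f_\varepsilon$ so that $z_i=0$ whenever $|\langle X_i,\beta^*\rangle|>\varepsilon$ and $i\notin\mathcal O$, and so that $\beta^*-\hat\nu$ has margin exactly $\varepsilon$ everywhere. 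This design of $Z$ is the missing ingredient.

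\textbf{Bounding the correction.} The paper then applies the $\ell_1$-\emph{quotient property} (Wojtaszczyk; Krahmer--K\"ummerle--Rauhut), valid under precisely the weak moment assumption, to get $\|\hat\nu\|_1\lesssim\|Z\|_2/\sqrt{\log(ep/n)}+\|Z\|_\infty$. Since most $z_i$ vanish, $\|Z\|_2^2\lesssim|\mathcal O|\log^{1+2\zeta}(n)+n\varepsilon^{2+\alpha}$, and optimising $\varepsilon$ gives the stated bound. Your alternative---``Maurey's empirical method, constructing $\beta_B$ as a random thresholded convex combination of $\{y_iX_i\}_{i\in B}$''---is not the same device and does not obviously work: Maurey's lemma, as used in the tesselation step, discretises a fixed $\ell_1$-ball rather than producing interpolants, and any linear combination of the $X_i$'s has $\ell_1$-norm of order $p$, not $\sqrt{|B|}$. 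Obtaining the claimed $\|\beta_B\|_1\lesssim\kappa\sqrt{|B|/\log(ep/|B|)}$ without essentially reproving the quotient property looks hard; moreover, since $B$ may be adversarial through $\mathcal O$, a high-probability construction over the sub-sample indexed by $B$ alone is delicate.

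A minor correction: under the weak moment assumption one gets $\max_i|\langle X_i,\beta^*\rangle|\lesssim\log^{1/2+\zeta}(n)$, not $\log^\zeta(n)$ (the Marcinkiewicz--Zygmund inequality contributes an extra $q^{1/2}$); squaring this is exactly where the exponent $1+2\zeta$ in the statement comes from, without the ad hoc ``additional logarithmic slack'' you invoke.
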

Crucial for the proof of Theorem \ref{theorem lower bound margin} is the fact that, defining
\begin{align} \label{def min l1 estimator}
    \hat \beta \in \argmin \left \{ \|\beta\|_1 ~~\text{subject to}~~y_i \langle X_i, \beta \rangle \geq 1, ~~i=1, \dots, n\right \},
\end{align}
we have the relation $\gamma=1/\|\hat \beta\|_1$ (see Lemma~\ref{lemma margin sur}). Hence, to obtain a lower bound for $\gamma$ it suffices to obtain an upper bound for $\|\hat \beta\|_1$, which we accomplish by explicitly constructing a $\beta$ that fulfills the constraints in \eqref{def min l1 estimator}. In particular, we use the $\ell_1$-quotient property \cite{Wojtaszczyk10,KrahmerKuemmerleRauhut18}
to find a perturbation of $\beta^*$ that has sufficiently small  $\ell_1$-norm while still fulfilling the constraint $y_i \langle X_i, \beta \rangle \geq 1$  for all $ i \in [n]$. 

The following proposition shows that even in an idealized setting with no noise and isotropic Gaussian features, where $\alpha=1$ (see Corollary~\ref{cor rate Gauss studentt}), the lower bound on the margin in Theorem \ref{theorem lower bound margin} is, in general, tight (up to logarithmic factors).  

\begin{proposition} \label{prop margin upper bound}
Suppose $p \gtrsim n$, $\mathcal{O}=\emptyset$ and that the entries of $\mathbb{X}$ are i.i.d. $\mathcal{N}(0,1)$ distributed. Then, for any $\beta^* \in \mathcal{S}^{p-1}$ which satisfies $\|\beta^*\|_\infty \lesssim 1/\sqrt{s}$, we have that
\begin{align} \label{margin lower}
    \mathbb{E} \gamma \lesssim \left ( \frac{\log(p) }{ n} \frac{1}{\sqrt{s}}\right )^{1/3}. 
\end{align}
\end{proposition}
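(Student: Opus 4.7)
My plan is to upper bound $\gamma$ via linear programming duality. Since
$$
\gamma = \max_{\|\beta\|_1 \leq 1}\min_{i}\, y_i \langle X_i,\beta\rangle = \max_{\|\beta\|_1 \leq 1}\min_{w \in \Delta^n}\sum_{i=1}^n w_i y_i \langle X_i,\beta\rangle,
$$
where $\Delta^n$ is the probability simplex on $[n]$, the minimax theorem (bilinear form on compact convex sets) gives the dual representation
$$
\gamma = \min_{w \in \Delta^n} \left\|\sum_{i=1}^n w_i y_i X_i \right\|_\infty .
$$
Consequently any (possibly data-dependent) choice of $w \in \Delta^n$ yields a pointwise upper bound on $\gamma$, and it suffices to exhibit one good choice. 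The key idea is to concentrate the weights on indices where the signal $\langle X_i,\beta^*\rangle$ is small, i.e.\ on examples close to the decision boundary, since this is where the effective sparsity $\|\beta^*\|_\infty \lesssim 1/\sqrt s$ can be exploited.

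Concretely, let $u_i = \langle X_i, \beta^*\rangle$, pick a threshold $\tau>0$, set $S = \{i \in [n] : |u_i| \leq \tau\}$, $N = |S|$, and define $w_i = \mathbf{1}_S(i)/N$ on $\{N \geq 1\}$. Using Gaussianity I would decompose $X_i = u_i \beta^* + V_i$, where $V_i := X_i - u_i \beta^* \sim \mathcal{N}(0, I - \beta^*\beta^{*T})$ is independent of $u_i$. Since $y_i = \sgn{u_i}$,
$$
\sum_{i=1}^n w_i y_i X_i = \left(\frac{1}{N}\sum_{i \in S} |u_i|\right) \beta^* + \frac{1}{N}\sum_{i \in S} \sgn{u_i} V_i .
$$
The first summand has $\ell_\infty$-norm at most $\tau \|\beta^*\|_\infty \lesssim \tau/\sqrt s$. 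For the second, conditional on $(u_i)_{i=1}^n$ the $V_i$ are still i.i.d.\ centred Gaussians, and because they are symmetric the random signs $\sgn{u_i}$ do not change their joint law. Hence this summand is, conditionally on $(u_i)$, a Gaussian vector with covariance $(I - \beta^*\beta^{*T})/N$ and coordinatewise variance at most $1/N$, so the standard bound on the maximum of $p$ Gaussians gives a conditional expectation of its $\ell_\infty$-norm of order $\sqrt{\log(p)/N}$.

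To finish I would integrate over $(u_i)$. Because $\mathbb{E}N = n\,\mathbb{P}(|u| \leq \tau) \asymp n \tau$ for $\tau \lesssim 1$, a Chernoff bound ensures $N \gtrsim n\tau$ outside an event of probability $e^{-c n \tau}$, on which a trivial choice like $w = e_1$ gives $\gamma \leq \|X_1\|_\infty$ with $\mathbb{E}\|X_1\|_\infty^2 \lesssim \log p$, so Cauchy--Schwarz shows its contribution to $\mathbb{E}\gamma$ is negligible. Altogether,
$$
\mathbb{E}\gamma \lesssim \frac{\tau}{\sqrt s} + \sqrt{\frac{\log p}{n \tau}},
$$
and the choice $\tau \asymp (s \log(p)/n)^{1/3}$ balances both terms and yields the announced rate $(\log(p)/(n\sqrt s))^{1/3}$. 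The main conceptual obstacle is identifying this family of dual weights and seeing that it realises exactly the sparsity-driven bias/variance trade-off; the remaining pieces (the conditional Gaussian computation, the Chernoff control of $N$, and the integration over the tail event) are routine once the correct $w$ is in hand.
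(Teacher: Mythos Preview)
Your argument is correct and follows essentially the same strategy as the paper: the dual representation $\gamma=\min_{w\in\Delta^n}\|\sum_i w_i y_iX_i\|_\infty$, weights concentrated on the indices closest to the decision boundary, and the orthogonal decomposition $X_i=u_i\beta^*+V_i$ with the signal part controlled via $\|\beta^*\|_\infty$ and the orthogonal part via a conditional Gaussian maximum. The only technical difference is that the paper selects a \emph{deterministic} number $K=\tau_n^{-1}$ of smallest order statistics (so $\|w\|_2^2$ is fixed and no Chernoff step is needed, at the price of citing order-statistics estimates for $\mathbb{E}\sum_{k\le K}|u|_{(k)}$), whereas you threshold at level $\tau$ and then control the random count $N$; your route makes the signal bound $\le\tau\|\beta^*\|_\infty$ immediate but requires the extra tail-event integration.
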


\subsubsection{Rates for AdaBoost}

Combining Theorems \ref{theorem error bound dependance margin} and \ref{theorem lower bound margin} with Lemma~\ref{lemma upper bound iterations}  we obtain the following corollary, that shows convergence rates for AdaBoost. 
\begin{corollary} \label{cor adaboost quadratic stepsize guarantee 1/12}
Grant the assumptions of Theorem  \ref{theorem lower bound margin} and assume that for some large enough constant $\kappa_1=\kappa_1(\alpha,\zeta)$ the AdaBoost Algorithm \ref{alg:main} is run for $$T \gtrsim \left ( n \left(  s + | \mathcal O| \right)^{\frac{\alpha}{2}} \right )^{\frac{2}{2+\alpha}}\log^{\kappa_1}(p)\epsilon^{-2}$$ iterations with learning rate $\epsilon \leq 1/6$. Then, with probability at least $1-cn^{-1}$, the output $\tilde \beta_T$ of AdaBoost Algorithm \ref{alg:main} satisfies for some constant $\kappa_2=\kappa_2(\alpha,\zeta)$
$$
d \left({\tilde \beta_T}, \beta^* \right) \lesssim \left( \frac{(s+|\mathcal{O}|)\log^{\kappa_2}(p)}{n} \right)^{\frac{\alpha}{(2+\alpha)^2}}.
$$
Moreover, if $\mathbb{X}$ satisfies a small deviation assumption with $\theta >0$, then, with probability at least $1-cn^{-1}$
$$
d \left({\tilde \beta_T}, \beta^* \right) \lesssim  \left( \frac{(s+|\mathcal{O}|)\log^{\kappa_2}(p)}{n} \right)^{\frac{\alpha \left (1+\frac{2}{\theta} \right )}{(2+\alpha)^2}}. 
$$
\end{corollary}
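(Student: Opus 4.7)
The argument combines the three preceding results by chaining high-probability events, treating logarithmic factors as generic polylogarithms that will be absorbed into the exponents $\kappa_1$ and $\kappa_2$.

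First, I apply Theorem \ref{theorem lower bound margin} to obtain an event $\mathcal{E}_1$ of probability at least $1 - cn^{-1}$ on which $\gamma \geq \gamma_0$ for an explicit deterministic $\gamma_0$ of order
$$
\gamma_0 \asymp \bigl[n\,(s+|\mathcal{O}|+1)^{\alpha/2}\log^{\kappa}(p)\bigr]^{-1/(2+\alpha)}
$$
after bounding $\log(ep/n)$ and $\log^{1+2\zeta}(n)$ by suitable powers of $\log(p)$. Second, I verify that the iteration budget $T$ stated in the corollary satisfies the requirement $T \gtrsim \log^{2\zeta+1}(np)/(\epsilon^{2} \gamma^{2})$ of Lemma \ref{lemma upper bound iterations} on $\mathcal{E}_1$. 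This is a direct algebraic computation using $1/\gamma_0^{2} \lesssim \bigl(n(s+|\mathcal{O}|)^{\alpha/2}\bigr)^{2/(2+\alpha)} \log^{\kappa}(p)$, and determines the constant $\kappa_1$. Applying Lemma \ref{lemma upper bound iterations} then yields an event $\mathcal{E}_2$ of probability at least $1 - p^{-1}$ on which $\tilde\beta_T$ is an approximation of the max $\ell_1$-margin in the sense of \eqref{def 1-eps max l1 margin}.

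Third, on $\mathcal{E}_1$ I invoke Theorem \ref{theorem error bound dependance margin} with the deterministic lower bound $\gamma_0$ and failure parameter $t = cn^{-1}$. Substituting $\gamma_0$ into
$$
\eta \;=\; \left(\frac{\log^{2\zeta+1}(p)\log(n)}{\gamma_0^{2}\,n}\right)^{1/(2+\alpha)}
$$
and simplifying yields, up to polylogarithmic factors, exactly the rate appearing in the corollary, both for the basic bound $\eta^{\alpha}$ and for the improved small-deviation bound $\eta^{\alpha(1+2/\theta)}$. Collecting the polylog exponents from all three results into a single exponent $\kappa_2$ gives the stated form. A union bound over $\mathcal{E}_1$, $\mathcal{E}_2$ and the failure event in Theorem \ref{theorem error bound dependance margin} produces the overall probability $1 - cn^{-1}$.

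The main task is purely bookkeeping. The one genuine check is that the side conditions on the adversarial flips required by Theorem \ref{theorem error bound dependance margin}, namely $|\mathcal{O}| \lesssim \eta^{\alpha} n$ (respectively $|\mathcal{O}| \lesssim \eta^{\alpha(1+2/\theta)} n$), are satisfied after substitution. Once $\eta$ is replaced by the explicit expression in $n$ and $s+|\mathcal{O}|$, this reduces to an inequality of the form $|\mathcal{O}| \lesssim (s+|\mathcal{O}|)^{\delta}\,n^{1-\delta}\log^{\kappa}(p)$ for an explicit $\delta \in (0,1)$, which holds without further work under the standing assumption $|\mathcal{O}|=o(n)$ together with $p \gtrsim n$. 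Hence the only real obstacle is tracking logarithmic factors cleanly through the composition, not any new probabilistic estimate.
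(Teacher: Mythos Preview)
Your proposal is correct and follows precisely the approach indicated in the paper, which simply states that the corollary is obtained by ``Combining Theorems \ref{theorem error bound dependance margin} and \ref{theorem lower bound margin} with Lemma~\ref{lemma upper bound iterations}'' and gives no further detail. Your chaining of the three high-probability events, the deterministic replacement of $\gamma$ by $\gamma_0$ in the iteration condition of Lemma~\ref{lemma upper bound iterations}, and the explicit verification of the side condition $|\mathcal{O}|\lesssim \eta^{\alpha}n$ (respectively $\eta^{\alpha(1+2/\theta)}n$) are exactly the bookkeeping steps that the paper leaves implicit.
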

As for consistency $(s+|\mathcal{O}|)\log^{\kappa_2}(p)=o(n)$ is required, it is ensured that in this relevant regime AdaBoost is an approximation of the max $\ell_1$-margin if we run AdaBoost for $T \asymp n \log(p)^{\kappa_1}/\epsilon^{-2}$ iterations. Hence, by contrast to other algorithms such as gradient descent (e.g. section 9.3.1 in \cite{boyd2004convex}) where often a logarithmic number of iterations in $n$ suffices, we require in the worst case an approximately linear number of iterations in $n$ to ensure consistency of AdaBoost. 

\subsubsection{Examples} 
We now illustrate our developed theory for some specific feature distributions. First, for the density of the $x_j$'s being continuous, bounded and unimodal, we are able to show that the anti-concentration condition holds with parameter $\alpha=1/2$.
\begin{corollary} \label{cor rate unimodal}
Assume that $\mathbb{X}=(X_i)_{i \in [n]}$ has i.i.d. symmetric, zero mean and unit variance entries and satisfies the weak moment assumption with $\zeta \geq 1/2$. Assume that the $x_{ij}$'s have density $f$ that is continuous, bounded by a constant, and unimodal, i.e. $f(a\varepsilon)\geq f(\varepsilon) ~\forall a \in (0,1), \varepsilon \in \mathbb{R}$. Then $\mathbb{X}$ satisfies the anti-concentration condition with parameter $\alpha=1/2$.  In particular, this includes features that are distributed according to the uniform, Gaussian, student-t with $d \gtrsim \log(p)$, $d \in \mathbb{N},$ degrees of freedom distributions (with $\zeta=1/2$) and the Laplace distribution (with $\zeta=1$). Hence, when $p \gtrsim n$ and AdaBoost is for some constant $\kappa_1=\kappa_1(\zeta)$ run for at least 
$$T \gtrsim \left ( n \left(  s + | \mathcal O| \right)^{\frac{1}{4}} \right )^{\frac{4}{5}}\log(p)^{\kappa_1}\epsilon^{-2}$$
iterations, then with probability at least $1-cn^{-1}$ we have that for some constant $\kappa_2=\kappa_2(\zeta)$
\begin{align*}
    d(\tilde \beta_T, \beta^*) \lesssim \left ( \frac{(s+|\mathcal{O}|)\log^{\kappa_2}(p)}{n}\right )^{\frac{2}{25}}. 
\end{align*}
\end{corollary}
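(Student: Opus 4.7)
The plan is to verify that the anti-concentration condition in Definition \ref{def anticoncentration} holds with $\alpha=1/2$ under the stated density assumptions, and then deduce the rates by substitution into Corollary \ref{cor adaboost quadratic stepsize guarantee 1/12}; with $\alpha=1/2$, the formulas $\alpha/(2+\alpha)^2 = 2/25$ and $2/(2+\alpha) = 4/5$ reproduce the stated error exponent and iteration count exactly.

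For the anti-concentration bound I would fix $\beta \in \mathcal{S}^{p-1}$ and $\varepsilon \in [p^{-1},1]$, let $j^\star \in \argmax_j |\beta_j|$, and split on whether $|\beta_{j^\star}|$ exceeds the threshold $\sqrt{\varepsilon}$. When $|\beta_{j^\star}| \geq \sqrt{\varepsilon}$, conditioning on $(x_j)_{j \neq j^\star}$ shows that $\langle X,\beta\rangle$ is (up to an additive constant) a rescaling of $x_{j^\star}$, whose density is bounded by $M/|\beta_{j^\star}|$ with $M := \|f\|_\infty$. Hence
$$
\mathbb{P}(|\langle X,\beta\rangle| \leq \varepsilon \mid (x_j)_{j \neq j^\star}) \;\leq\; \frac{2M\varepsilon}{|\beta_{j^\star}|} \;\leq\; 2M\sqrt{\varepsilon},
$$
and taking expectation closes this case. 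When $|\beta_{j^\star}| < \sqrt{\varepsilon}$, the random variable $Z := \langle X,\beta\rangle$ has mean zero, unit variance and uniformly bounded third moment (since the weak moment assumption with $\zeta \geq 1/2$ gives $\mathbb{E}|x_j|^3 \lesssim 1$). Berry--Esseen then yields
$$
\sup_t \bigl|\mathbb{P}(Z \leq t) - \Phi(t)\bigr| \;\lesssim\; \sum_j |\beta_j|^3 \;\leq\; \max_j|\beta_j| \cdot \|\beta\|_2^2 \;\leq\; \sqrt{\varepsilon},
$$
which, together with $\mathbb{P}(|N(0,1)| \leq \varepsilon) \leq 2\varepsilon/\sqrt{2\pi}$, gives $\mathbb{P}(|Z| \leq \varepsilon) \lesssim \sqrt{\varepsilon}$ for all $\varepsilon \leq 1$.

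To identify the four examples, I would simply check that each density is continuous, bounded, symmetric and unimodal once standardized to unit variance, and verify the weak moment constants directly: uniform and Gaussian are sub-Gaussian and yield $\zeta = 1/2$; for the Laplace distribution $\mathbb{E}|x|^q = q!$ gives $(\mathbb{E}|x|^q)^{1/q} \lesssim q$ and $\zeta = 1$; for a standardized student-$t$ with $d \gtrsim \log p$ degrees of freedom the identity $\Gamma((d-q)/2)/\Gamma(d/2) \asymp (d/2)^{-q/2}$ (valid for $q \leq d/2$) produces $(\mathbb{E}|x|^q)^{1/q} \lesssim \sqrt{q}$ for $q \leq \log p$, i.e.\ $\zeta = 1/2$. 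Once $\alpha = 1/2$ and the values of $\zeta$ are in hand, the displayed statements follow by plugging into Corollary \ref{cor adaboost quadratic stepsize guarantee 1/12}.

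The main (and only minor) obstacle is the calibration of the threshold $\sqrt{\varepsilon}$ in the case split: it is precisely the value that balances the conditional density bound $\varepsilon/|\beta_{j^\star}|$ in the first case against the Berry--Esseen error $\max_j|\beta_j|$ in the second case, and it is what forces the exponent $\alpha = 1/2$ (as opposed to $\alpha = 1$, which the Gaussian example alone would give). Everything else is substitution into the general corollary.
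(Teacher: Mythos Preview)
Your proposal is correct and matches the paper's approach almost exactly: the paper's Lemma~\ref{lemma anti-concentration unimodal} uses the same threshold split at $\sqrt{\varepsilon}$ and the identical Berry--Esseen argument in the small-coordinate case. The only difference is in the large-coordinate case: the paper invokes Anderson's inequality (this is where unimodality enters) to deduce $\mathbb{P}\bigl(|\beta_{j^\star} x_{j^\star} + \sum_{j\neq j^\star}\beta_j x_j|\le\varepsilon\bigr)\le\mathbb{P}(|\beta_{j^\star} x_{j^\star}|\le\varepsilon)$ and then bounds the right-hand side via the bounded density, whereas you condition on $(x_j)_{j\neq j^\star}$ and use the bounded density directly. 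Your variant is slightly more elementary and, as a byproduct, never actually uses the unimodality hypothesis---only the density bound $\|f\|_\infty\lesssim 1$. The verification of the four examples and the substitution into Corollary~\ref{cor adaboost quadratic stepsize guarantee 1/12} are the same as in the paper.
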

When the features are Gaussian or student-t with at least $c\log(p) $ degrees of freedom distributed, we are able to improve upon this and show that the anti-concentration and small deviation conditions are both fulfilled with parameters $\alpha=\theta=1$. Moreover, for these distributions the prediction and Euclidean estimation error are closely related such that we are also able to obtain error bounds in this distance. 
\begin{corollary} \label{cor rate Gauss studentt}
 Assume that the entries of $\mathbb{X}=(X_i)_{i \in [n]}$ are i.i.d. $\mathcal{N}(0,1)$ or $\sqrt{(d-2)/d} t_{d}$ distributed for $ \log(p) \lesssim d $, $d \in \mathbb{N}$, $p \gtrsim 1$. Then $\mathbb{X}$ satisfies the anti-concentration and small deviation assumptions with $\alpha=\theta=1$ and the weak moment assumption with $\zeta=1/2$. In particular, when $p \gtrsim n$, and after at least 
$$T \gtrsim \left ( n \left(  s + | \mathcal O| \right)^{\frac{1}{2}} \right )^{\frac{2}{3}}\log(p)^{\kappa_1}\epsilon^{-2}$$
 iterations of AdaBoost, we have with probability at least $1-cn^{-1}$ that for some constant $\kappa_2$
 \begin{align*}
       d(\tilde \beta_T, \beta^*) \lesssim \left (  \frac{(s+|\mathcal{O}|)\log^{\kappa_2}(p)}{n} \right )^{1/3}. 
 \end{align*}
 Moreover, on the same event, we have that 
 \begin{align} \label{cor Euclidean rates Ada}
 \left  \| \frac{\tilde \beta_T}{\|\tilde \beta_T\|_2} - \beta^*\right \|_2 \lesssim \left (  \frac{(s+|\mathcal{O}|)\log^{\kappa_2}(p)}{n} \right )^{1/3}. 
 \end{align}
\end{corollary}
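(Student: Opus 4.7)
My plan is to proceed in three stages: first, verify the three feature-distribution conditions with the claimed parameters; second, apply Corollary~\ref{cor adaboost quadratic stepsize guarantee 1/12}; third, upgrade the prediction-error bound to the Euclidean bound.

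For the first stage, I would check the weak moment assumption with $\zeta = 1/2$ using $(\E|g|^q)^{1/q} \asymp \sqrt{q}$ for standard Gaussian, and, for the scaled $t_d$ distribution with $d \gtrsim \log(p)$, using that moments up to order $d-1$ are finite and exhibit $\sqrt{q}$-scaling for $q \leq \log(p)$. For anti-concentration with $\alpha = 1$, the Gaussian case is immediate since $\langle X, \beta\rangle \sim \mathcal{N}(0,1)$ has density bounded by $1/\sqrt{2\pi}$, giving $\p(|\langle X, \beta\rangle| \leq \varepsilon) \lesssim \varepsilon$. For the student-$t$ case, $\langle X, \beta\rangle$ is no longer a scaled $t_d$ variable, so I would combine the Esseen concentration inequality $\sup_x \p(|\langle X, \beta\rangle - x| \leq \varepsilon) \lesssim \varepsilon / \sigma_\varepsilon$ with the lower bound $\sigma_\varepsilon^2 \gtrsim \sum_j \min(\beta_j^2, \varepsilon^2)$ (which holds because the scaled $t_d$ density is bounded below on $[-1,1]$), splitting according to whether $\sum_{|\beta_j| \leq \varepsilon} \beta_j^2 \geq 1/2$ or not. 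The complementary case, in which a few large coordinates dominate, would be closed by a direct density bound on the heaviest coordinate together with a local-CLT argument over buckets of comparable-size coordinates. The small-deviation bound $\theta = 1$ would follow symmetrically, using that the densities of both $\mathcal{N}(0,1)$ and scaled $t_d$ are bounded above on $[-1,1]$.

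In the second stage, plugging $\alpha = \theta = 1$ and $\zeta = 1/2$ into Corollary~\ref{cor adaboost quadratic stepsize guarantee 1/12} yields the prediction-error exponent $\alpha(1+2/\theta)/(2+\alpha)^2 = 3/9 = 1/3$ together with the iteration count $T \gtrsim (n(s+|\mathcal{O}|)^{1/2})^{2/3}\log^{\kappa_1}(p)\epsilon^{-2}$, which establishes the bound on $d(\tilde\beta_T, \beta^*)$.

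For the third stage, I would set $u := \tilde\beta_T/\|\tilde\beta_T\|_2$, $v := \beta^*$, define $\phi := \arccos(\langle u, v\rangle)$, and decompose $u = \cos(\phi)\,v + \sin(\phi)\,w$ with unit $w \perp v$, so that $\|u - v\|_2 = 2\sin(\phi/2) \asymp \phi$. For Gaussian features, the rotational invariance of $\mathcal{N}(0, I_p)$ gives the identity $d(\tilde\beta_T, \beta^*) = \phi/\pi$, hence $\|u - v\|_2 \lesssim d(\tilde\beta_T, \beta^*)$. For student-$t$ features, the joint law of $(\langle X, v\rangle, \langle X, w\rangle)$ is approximately bivariate $\mathcal{N}(0, I_2)$ by a bivariate Berry--Esseen estimate; combined with the anti-concentration and small-deviation bounds at $\alpha = \theta = 1$, this yields $d(\tilde\beta_T, \beta^*) \asymp \phi$ and therefore the same conclusion. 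The hardest step I anticipate is the student-$t$ verification of anti-concentration in the intermediate regime of $\|\beta\|_\infty$, where neither Berry--Esseen nor a direct density bound suffices on its own, and the analogous bivariate comparison in the Euclidean step, since iid student-$t$ features are not rotationally invariant.
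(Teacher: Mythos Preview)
Your three-stage structure matches the paper, and your Gaussian case is essentially identical: $\langle X,\beta\rangle\sim\mathcal N(0,1)$ gives $\alpha=\theta=1$ directly, and Grothendieck's identity yields $d(\tilde\beta_T,\beta^*)=\pi^{-1}\arccos\langle\beta^*,\tilde\beta_T/\|\tilde\beta_T\|_2\rangle\geq\|\beta^*-\tilde\beta_T/\|\tilde\beta_T\|_2\|_2$.

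For the student-$t$ case, however, the paper takes a much simpler route that you miss entirely. Writing $x_j=\sqrt{(d-2)/d}\,z_j/\sqrt{\chi^2_{d,j}/d}$ with $z_j\sim\mathcal N(0,1)$ independent of $\chi^2_{d,j}$, one conditions on the event $\{d/2\le\chi^2_{d,j}\le 2d\ \forall j\}$, which has probability $\ge 1-2pe^{-d/16}\ge 1-2p^{-1}$ under $d\gtrsim\log p$. On this event $\langle X,\beta\rangle$ is conditionally Gaussian with variance in $[1/2,2]$, so both anti-concentration and small deviation with $\alpha=\theta=1$ follow immediately from the Gaussian computation (Lemma~\ref{lemma studentt conditions}), and the Euclidean comparison $d(\beta,\tilde\beta)\gtrsim\|\beta-\tilde\beta\|_2$ reduces (Lemma~\ref{Lemma margin studentt}) to Grothendieck applied to the rescaled vectors $\beta_\chi=(\beta_j\sqrt{d/\chi^2_{d,j}})_j$.

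Your alternative route through Esseen, local CLT, and bivariate Berry--Esseen is more delicate and, as sketched, has real gaps. First, your small-deviation argument is misstated: an \emph{upper} bound on the density of $x_j$ yields anti-concentration, not the required \emph{lower} bound $\mathbb P(|\langle X,\beta\rangle|\le\varepsilon)\gtrsim\varepsilon$; moreover the relevant density is that of the sum $\langle X,\beta\rangle$, not of a single coordinate, and you give no argument for a uniform lower bound on it. Second, the bivariate Berry--Esseen step for the Euclidean bound does not close: when $\beta^*$ is sparse---say $\beta^*=e_1$, which is permitted---the Lyapunov quantity $\sum_j|\beta^*_j|^3$ is of order one, so the Gaussian approximation to $(\langle X,\beta^*\rangle,\langle X,w\rangle)$ carries an error of order at best $O(1/d)=O(1/\log p)$, which swamps the target bound $((s+|\mathcal O|)\log^{\kappa_2}(p)/n)^{1/3}$ for large $n$. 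The paper's conditioning trick sidesteps all of this by reducing exactly to the Gaussian case.
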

   We now compare the convergence guarantees for AdaBoost with Gaussian or student-t distributed features with the state of the literature, where mostly Gaussian features and Euclidean estimation error were considered. 
    When $\mathcal{O}=\emptyset$ the performance guarantee in \eqref{cor Euclidean rates Ada} is better than existing bounds for  regularized algorithms \cite{PlanVershynin13,ZhangYiJin14} and match, up to logarithmic factors, the best available bounds that can be obtained by combining the tesselation result in Proposition \ref{proposition tesselation maurey} with Plan and Vershynin's \cite{PlanVershynin13CPAM} linear programming estimator. A straightforward adaptation of the proofs from \cite{DirksenMendelson21} for the tesselation to our setting, would lead to an exponent of $1/4$ in \eqref{cor Euclidean rates Ada} in case of the student-t distribution with at least $c \log(p)$ degrees of freedom. We achieve improved rates by replacing the net dicretization from~\cite{DirksenMendelson21} with a more involved Maurey  argument. 
    
For Gaussian features and in the presence of adversarial errors, the convergence rate obtained in \eqref{cor Euclidean rates Ada} improves over the rate for the regularized estimator by \cite{PlanVershynin13} if  $(|\mathcal{O}|/n)^{4}=o(s/n)$ and otherwise their algorithm achieves faster convergence rates, in both cases up to logarithmic factors.  If $\beta^*$ is exactly $s$-sparse, i.e. at most $s$ entries of $\beta^*$ are non-zero, then the rate in \eqref{cor Euclidean rates Ada} is sub-optimal in the dependence on $s\log(p)/n$ and $|\mathcal{O}|/n$ and faster rates were obtained for a (non-interpolating) regularized estimator in \cite{AwasthiBalcanHaghtalabZhang16} for strongly log-concave features. \\

\subsection{Simulations}

In this subsection, we provide simulations for various feature distributions to illustrate our theoretical results qualitatively.
Alongside Theorem \ref{theorem error bound dependance margin}, we show the empirical prediction error as a function of the sample size $n$ and the number of corrupted labels $|\mathcal{O}|$.
Moreover, to accompany Theorem \ref{theorem lower bound margin}, we plot the margin as a function of $n$.

As illustrated in Corollary \ref{cor rate unimodal}, the developed theory applies to various distributions of the entries of the features $X_{ij}$, such as continuous, bounded and unimodal distributions.
To highlight the universality of our theory, simulations were performed for the standard normal distribution $\mathcal{N}(0,1)$, the student-t distribution with $\log(p)$ degrees of freedom, the uniform distribution with unit variance , and the Laplace distribution with zero location and unit scale parameter. 

The ground truth $\beta^*$ was generated randomly, with an $s$-sparse Rademacher prior.
That is, $s$ out of the possible $p$ entries are chosen at random, and set to $\pm1/\sqrt{s}$ with equal probability.
The remaining entries are set to zero, making $\beta^*$ $s$-sparse, with $||\beta^*||_2=1$.
The indices for the set of corruptions $\mathcal{O}$ was chosen uniformly at random , such that a predetermined number of labels is corrupted.
The sparsity was chosen as $s=5$.
As Theorem \ref{theorem error bound dependance margin} assumes $p\gtrsim n$, we let $p=10n$.
AdaBoost was executed as described in Algorithm \ref{alg:main}, using step size $\epsilon=0.2$. 
The number of steps performed was $T=(n\sqrt{s+|\mathcal{O}|})^{2/3}\log(p)/\epsilon^2$ steps, imitating the setting in Corollary \ref{cor rate Gauss studentt}.
The simulated are averaged over twenty iterations.

\begin{figure}[H]

    \centering
    \begin{subfigure}{0.5\textwidth}
        \includegraphics[width = \linewidth]{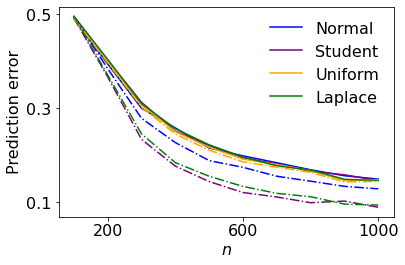}
    \end{subfigure}%
    \begin{subfigure}{0.5\textwidth}
        \includegraphics[width = \linewidth]{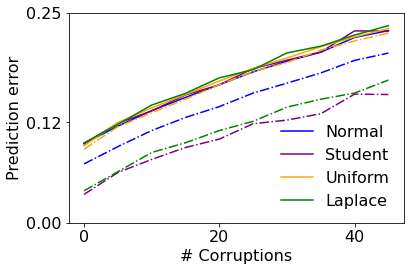}
    \end{subfigure}

    \caption{On the left, we plot the prediction error for  $|\mathcal{O}|=40$ corruptions, against the number of samples $n$, for various features. On the right, we show for $n=500$ how the prediction error changes as the number of randomly flipped labels $|\mathcal{O}|$ decreases. The solid lines represent the max-$\ell_1$-margin estimators $\hat{\beta}$ \eqref{intro def estimator min l1}. The dash-dotted lines are instances of AdaBoost $\tilde{\beta}_T$, as defined in Algorithm \ref{alg:main}.}
       \label{figure 1}

    \begin{subfigure}{0.5\textwidth}
        \includegraphics[width = \linewidth]{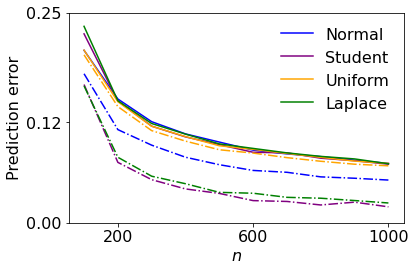}
   
    \end{subfigure}%
    \begin{subfigure}{0.5\textwidth}
        \includegraphics[width = \linewidth]{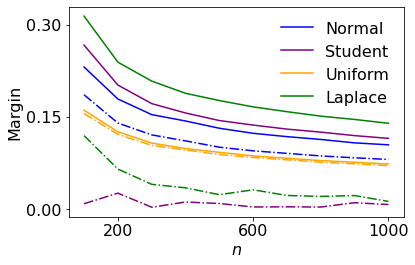}
    \end{subfigure}
    \caption{
    On the left, we consider the same setting as in Figure \ref{figure 1}, however in the case of noiseless data $|\mathcal{O}|=0$.  
    On the right, we plot for noiseless data the margins $\gamma$ of the max-$\ell_1$-margin estimators, as defined in \eqref{def max margin}, as well as the $\ell_1$-margins of AdaBoost $\tilde \beta_T$. }
       \label{figure 2}
\end{figure}

The two plots in Figure \ref{figure 1} show the noisy case, while noise is absent in the two plots in Figure \ref{figure 2}.
For the max-$\ell_1$-margin estimator the prediction error for all simulated features appears to behave identically. By contrast, the $\ell_1$-margin differs widely across the features by a multiplicative constant, but shows the same asymptotic behaviour. 

As stated in Lemma \ref{lemma upper bound iterations}, we see that the margin of AdaBoost is close to the max $\ell_1$-margin and that the performance of AdaBoost is similar to the performance of the max-$\ell_1$-margin classifier. 
The proximity of AdaBoost to its limit appears to depend on the distribution of the features.
In particular, the simulations suggest that heavier tails lead to slower convergence. This is reasonable, considering that AdaBoost rescales the features with their $\ell_\infty$-norm, see Algorithm \ref{alg:main}. 
This is particularly visible when comparing the uniform distribution, for which the max-$\ell_1$-margin estimator and AdaBoost seem to behave almost identically, to the student-t distribution, for which the margin is close to zero for some $n$.

\section{Conclusion}
In this paper, we have shown that AdaBoost, as described in Algorithm \ref{alg:main}, achieves consistent recovery in the presence of small, adversarial errors, despite being overparameterized and interpolating the observations. Our results hold under weak assumptions on the tail behaviour and the behaviour around zero of the feature distribution. 
 In addition, for Gaussian features the derived convergence rates in Corollary \ref{cor rate Gauss studentt} are comparable to convergence rates of state-of-the-art regularized estimators \cite{PlanVershynin13}. This is a first step for the understanding of overparameterized and interpolating AdaBoost and other interpolating algorithms and shows why such algorithms can generalize well in high-dimensional and noisy situations, despite interpolating the data. 

However, in the presence of well-behaved noise, as in logistic regression, our bounds are suboptimal and require that the fraction of mislabeled data points decays to zero. By contrast, regularized estimators  \cite{PlanVershynin13} are able to achieve faster convergence rates in such settings and do not require that the fraction of mislabeled data is asymptotically vanishing to achieve consistency.

Many open question do remain. The convergence rate for Gaussian features in Corollary \ref{cor rate Gauss studentt} is among the best available results if $\beta^*$ is allowed to be genuinely effectively sparse. However, it is not clear whether the exponent in \eqref{cor Euclidean rates Ada} is optimal, and further research about information theoretic lower bounds is needed.  When $\beta^*$ is exactly sparse, the convergence rate in \eqref{cor Euclidean rates Ada} is sub-optimal and better results for log-concave features have been obtained by \cite{AwasthiBalcanHaghtalabZhang16} for a regularized estimator. Moreover, for noiseless data and exact sparse $\beta^*$ our simulations suggest that AdaBoost attains a faster rate than in the noisy case. It thus remains as an interesting further research question how to show that AdaBoost attains faster convergence rates for noiseless data and when $\beta^*$ is exact sparse. 

Finally, our results rely heavily on the anti-concentration assumption in Definition \ref{def anticoncentration}, which is not fulfilled for Rademacher features. Assuming additionally $\|\beta^*\|_\infty=o(1)$ \cite{AiLapanowskiPlanVershynin14} obtained convergence rates for the regularized estimator proposed in \cite{PlanVershynin13}. It is straightforward to adapt our lower bound on the max-$\ell_1$-margin in Theorem \ref{theorem lower bound margin} to such a setting. However, by contrast, it is not clear how to modify the uniform tesselation result used in the proof of Theorem \ref{theorem error bound dependance margin} and consequently show convergence rates for AdaBoost without anti-concentration. 

\section{Proofs} 
\subsection{Proof of Theorem \ref{theorem error bound dependance margin} }
\begin{proof}
 Let $\tilde \beta$ be an approximation of the max $\ell_1$-margin. Defining $\bar \beta := 2\tilde{\beta} /\left( \gamma_0 \| \tilde \beta\|_1 \right)$ we have on an event of probability at least $1-t$ that 
$$
\min_{i \in [n]}  y_i\langle  X_i, \bar \beta \rangle \geq 1 ,
$$
and  $ \bar \beta \in r_n B_1^p$ for $r_n = 2/\gamma_0$. It follows that $\| \tilde \beta \|_1 /\| \tilde \beta \|_2 = \| \bar \beta \|_1 / \| \bar \beta \|_2$, which we bound by applying the following proposition.

\begin{proposition} \label{control norm ratio heavy tail}
Assume $p \gtrsim n$ and that $\mathbb{X}=(X_i)_{i \in [n]}$ has i.i.d. zero mean and unit variance entries and satisfies the weak moment assumption with $\zeta \geq 1/2$.  Let $r_n >0$ be such that 
$$
r_n \lesssim \sqrt\frac{n}{\log^{2\zeta+1}(p)\log(n)}  .
$$
Then, with probability at least $1- cp^{-1} $
for any $\beta \in \mathbb R^p$ such that $\| \beta \|_1 \leq r_n$ and $\min_{i \in [n]}  y_i\langle X_i , \beta \rangle \geq 1$, we have that 
$$
\frac{\| \beta \|_1}{\| \beta \|_2} \lesssim r_n. 
$$
Moreover, assume that $\mathbb{X}$ fulfills a small deviation assumption with parameter $\theta >0$. Then with probability at least $1 - cp^{-1}$, for any $\beta \in \mathbb R^p$ such that $\| \beta \|_1 \leq r_n$ and $\min_{i \in [n]}  y_i \langle  X_i , \beta \rangle \geq 1$, we have that 
$$
\frac{\| \beta \|_1}{\| \beta \|_2} \lesssim \frac{r_n}{\tau_n} ,
$$
where 
$$
\tau_n \asymp  \left[ \frac{ n}{\log^{2\zeta+1}(p)  \log (n)  r_n^2} \right]^{\frac{1}{\theta}}.
$$
\end{proposition}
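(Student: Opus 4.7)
The plan is to reduce both inequalities to uniform lower bounds on $\|\beta\|_2$. Since $\|\beta\|_1 \leq r_n$, showing $\|\beta\|_2 \gtrsim 1$ gives the first claim, and showing $\|\beta\|_2 \gtrsim \tau_n$ gives the second (note $\tau_n \gtrsim 1$ under the hypothesis on $r_n$). I would derive each lower bound by contradicting the interpolation constraint on a suitably discretized version of the feasibility set.

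For the first part, squaring and summing the constraints $y_i \langle X_i, \beta\rangle \geq 1$ yields $\sum_{i=1}^n \langle X_i, \beta\rangle^2 \geq n$. I would establish a matching uniform upper bound
\begin{align*}
\sup_{\beta \in r_n B_1^p} \left[ \sum_{i=1}^n \langle X_i, \beta\rangle^2 - C_1 n \|\beta\|_2^2 \right] \lesssim r_n^2 \log^{2\zeta+1}(p)\log(n)
\end{align*}
via Maurey's empirical method: any $\beta \in r_n B_1^p$ is approximated in $\ell_2$ by a sparse combination $\frac{r_n}{k}\sum_{l \leq k} \sigma_l e_{j_l}$ with residual $\lesssim r_n/\sqrt{k}$, after which the sparse contribution is handled by a union bound over $k$-sparse supports, using Proposition~\ref{control infty norm log moments} to control $\|\mathbb{X}\|_\infty$. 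The residual is controlled by a standard operator-norm argument. Combined with the hypothesis $r_n^2 \lesssim n/(\log^{2\zeta+1}(p)\log(n))$, the second term on the right-hand side is at most $n/2$, forcing $\|\beta\|_2 \gtrsim 1$ and hence $\|\beta\|_1/\|\beta\|_2 \lesssim r_n$.

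For the second part, the small-deviation assumption gives, for any $\beta$ with $\|\beta\|_2 = b \in [1,p]$, the single-observation bound $\mathbb{P}(|\langle X_i, \beta\rangle| < 1) \gtrsim b^{-\theta}$ by scaling, hence by independence
\begin{align*}
\mathbb{P}\bigl( \min_{i \in [n]} |\langle X_i, \beta\rangle| \geq 1 \bigr) \leq \exp(-cnb^{-\theta}).
\end{align*}
I would then union-bound this probability over an $\ell_2$-cover of $r_n B_1^p \cap b B_2^p$, at a scale $\delta$ chosen so that a $\delta$-perturbation does not break the event. Using the weak moment assumption to control $\max_i |\langle X_i, v\rangle|$ for fixed $v$ fixes $\delta$ up to polylogarithmic factors, and Maurey/Carl estimates the log-covering number by $\lesssim (r_n/\delta)^2 \log p$. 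Combining with the Chernoff-type bound and equating the two exponents gives precisely the threshold $\tau_n^\theta \asymp n/(r_n^2 \log^{2\zeta+1}(p)\log(n))$, so any feasible $\beta$ with $b \lesssim \tau_n$ is ruled out with probability at least $1 - cp^{-1}$.

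The main technical obstacle is carrying out the covering argument under only the weak moment assumption rather than sub-Gaussianity. A direct $\ell_2$-net is too coarse, because $\max_i \|X_i\|_2 \asymp \sqrt{p}$ would force a prohibitive covering scale; Maurey's empirical method sidesteps this by exploiting the $\ell_1$-geometry of the constraint set, and the heavy tails then contribute only polylogarithmic losses. Carefully balancing the sparse-approximation error against the weak-moment tail control, and tracking the resulting polylogarithmic factors so that they combine to exactly $\log^{2\zeta+1}(p)\log(n)$ in both parts, is the technical heart of the argument.
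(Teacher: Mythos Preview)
Your approach for the second part (under the small deviation assumption) is essentially the paper's: Maurey discretisation of $r_n B_1^p \cap c\tau_n B_2^p$ by the finite set $r_n\mathcal Z_m$ with $m \asymp r_n^2\log^{2\zeta}(p)\log(n)$, then the small-deviation Chernoff bound $\mathbb P(\min_i|\langle X_i,z\rangle|\geq 1/2)\leq \exp(-cn\tau_n^{-\theta})$ for each $z$, and a union bound of size $(2p+1)^m$. Balancing $m\log(p)$ against $n\tau_n^{-\theta}$ gives exactly the stated $\tau_n$.

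For the first part, the paper takes a different and simpler route that avoids discretisation altogether. It uses the \emph{averaged} first-moment constraint $\frac1n\sum_i|\langle X_i,\beta\rangle|\geq 1$ rather than the squared sum, and bounds the supremum of this empirical average over $r_nB_1^p\cap \tfrac12 B_2^p$ directly: the expectation is at most $\sup_{\|\beta\|_2\leq 1/2}\mathbb E|\langle X_1,\beta\rangle|\leq 1/2$ by Jensen and isotropy, and the fluctuation is controlled by the bounded-differences inequality (after conditioning on $\|\mathbb X\|_\infty\lesssim\log^\zeta(p)$) together with a Rademacher-complexity bound $\lesssim r_n\sqrt{\log(p)/n}$. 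This yields the contradiction already under the weaker hypothesis $r_n\lesssim\sqrt{n/\log(p)}$.

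Your quadratic-form route can be made to work, but the step ``the residual is controlled by a standard operator-norm argument'' is a gap as written. In the regime $p\gtrsim n$ the operator norm of $\mathbb X^{\mathsf T}$ is of order $\sqrt{p}$, so $\sum_i\langle X_i,\beta-z_\beta\rangle^2\leq p\,\|\beta-z_\beta\|_2^2\lesssim p\,r_n^2/k$, which would force $k\gtrsim p r_n^2/n$ and blow up the union bound. The repair is to use Maurey's approximation in the seminorm $\max_i|\langle X_i,\cdot\rangle|$ directly (on the event $\|\mathbb X\|_\infty\lesssim\log^\zeta(p)$ one gets $\max_i|\langle X_i,\beta-z_\beta\rangle|\lesssim r_n\log^\zeta(p)\sqrt{\log(n)/m}$), rather than in $\ell_2$ followed by an operator-norm step; alternatively, switching to the first-moment argument above sidesteps the issue entirely.
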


Having obtained a bound for the ratio $\|\tilde \beta\|_1/\|\tilde \beta\|_2$, we next use a sparse hyperplane tesselation result for the pseudo-metric $d$, arguing by contradiction. Since $d$ is scaling invariant, i.e. $d(\beta,\tilde \beta)=d(\beta,\tilde \beta/\|\tilde \beta\|_2)$ it suffices to consider only elements on the unit sphere.

\begin{proposition} \label{proposition tesselation maurey}
Assume $p\gtrsim n$ and that $\mathbb{X}=(X_i)_{i \in [n]}$ has i.i.d. zero mean and unit variance entries and satisfies the weak moment assumption with $\zeta \geq 1/2$ and the anti-concentration assumption  with 
$\alpha \in (0,1]$. 
For $a >0$, define 
\begin{equation} \label{def eta}
    \eta =c \left(a^2 \frac{\log^{2\zeta+1}(p)\log(n)}{n}   \right)^{\frac{1}{2+\alpha}},
\end{equation}
and assume $\eta \leq 1/2$. Define
$$
 \mathcal B(a,\eta) = \left \{ \beta \in \mathbb R^p :~d(\beta,\beta^*) \geq c \eta^{\alpha} \right\} .
 $$
Then with probability at least $1- c p^{-1}$ we have, uniformly for $\beta \in aB_1^p \cap \mathcal S^{p-1} \cap \mathcal B(a,\eta)$
$$
 \frac{1}{n} \sum_{i=1}^n \mathbf{1} \left ( \sgn{\langle X_i,\beta \rangle } \neq \sgn{\langle X_i,\beta^* \rangle} \right )\gtrsim   \eta^{\alpha} .
$$

\end{proposition}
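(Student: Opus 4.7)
The plan is to follow the hyperplane-tessellation strategy of \cite{DirksenMendelson21}: truncate the sign indicator to extract a margin of size $t \asymp \eta$, pay for the truncation via the anti-concentration assumption, and make the resulting lower bound uniform on $aB_1^p \cap \mathcal S^{p-1}$ through a sparse cover. The novelty is that under only the weak-moment assumption a metric $\varepsilon$-net is too costly, so the cover is produced by Maurey's empirical method, matching (up to logarithms) the sub-Gaussian rate. Concretely, pick $t = c_1 \eta$ with $c_1$ small and set
\[
F_t(v, X) = \mathbf{1}\bigl(\text{sgn}\langle X, v\rangle \neq \text{sgn}\langle X, \beta^*\rangle,\ |\langle X, v\rangle| \geq t,\ |\langle X, \beta^*\rangle| \geq t\bigr).
\]
For $\beta \in \mathcal{B}(a,\eta)$, applying the anti-concentration assumption both to $\beta^*$ and to $\beta$ (which is on the sphere) yields $\mathbb{E} F_t(\beta, X) \geq d(\beta,\beta^*) - c t^\alpha \gtrsim \eta^\alpha$, provided $c_1$ is small relative to the constant hidden in the definition of $\mathcal{B}(a,\eta)$.

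Next I construct the Maurey cover. Given $\beta \in aB_1^p \cap \mathcal{S}^{p-1}$, I represent $\beta/a$ as a convex combination over the atoms $\{0, \pm a e_j\}_{j=1}^p$ with weights $|\beta_j|/a$ (the $0$-atom absorbs the slack $1-\|\beta\|_1/a$), draw i.i.d.\ $V_1,\ldots,V_m$ from this law, and set $\bar\beta = m^{-1}\sum_{k=1}^m V_k$. For fixed $i$, Hoeffding on the bounded variables $\langle X_i, V_k\rangle$ gives
\[
\mathbb{P}\bigl(|\langle X_i, \bar\beta - \beta\rangle| \geq t/2 \mid \mathbb{X}\bigr) \leq 2\exp\!\Bigl(-\tfrac{c\, m\, t^2}{a^2 \|X_i\|_\infty^2}\Bigr).
\]
Conditioning on the high-probability event $\|\mathbb{X}\|_\infty \lesssim \log^\zeta(p)$ (Proposition~\ref{control infty norm log moments}) and taking a union bound over $i\in[n]$, one obtains with positive probability a realisation $\bar\beta$ satisfying $\max_i |\langle X_i, \bar\beta - \beta\rangle| \leq t/2$ as soon as $m \asymp a^2 \log^{2\zeta+1}(p) \log(n)/\eta^2$. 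The collection $\mathcal{N}$ of such $m$-sparse vectors on the Maurey grid has cardinality $|\mathcal{N}| \leq (2p+1)^m$.

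The coupling step is then automatic: on the event $\max_i |\langle X_i, \bar\beta - \beta\rangle| \leq t/2$, the inequality $|\langle X_i, \bar\beta\rangle| \geq 2t$ forces $\langle X_i, \beta\rangle$ to have the same sign as $\langle X_i, \bar\beta\rangle$ with magnitude $\geq 3t/2$, so $F_{2t}(\bar\beta, X_i) \leq \mathbf{1}(\text{sgn}\langle X_i, \beta\rangle \neq \text{sgn}\langle X_i, \beta^*\rangle)$. Moreover the Hilbert-space Maurey bound $\|\beta - \bar\beta\|_2 \lesssim a/\sqrt{m}$ combined with anti-concentration transfers the mean estimate of Step 1 from $\beta$ to $\bar\beta$, yielding $\mathbb{E} F_{2t}(\bar\beta, X) \gtrsim \eta^\alpha$. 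For each $\bar\beta \in \mathcal{N}$, Bernstein's inequality for the i.i.d.\ Bernoullis $F_{2t}(\bar\beta, X_i)$ gives $\frac{1}{n}\sum_i F_{2t}(\bar\beta, X_i) \geq \tfrac{1}{2}\mathbb{E} F_{2t}(\bar\beta, X) \gtrsim \eta^\alpha$ outside an event of probability $\leq e^{-c\eta^\alpha n}$. A union bound over $\mathcal{N}$ costs $\log|\mathcal{N}| \lesssim m \log(p)$, which is absorbed in $\eta^\alpha n$ precisely when $\eta$ is as in~\eqref{def eta}. Intersecting the events of Steps 2 and 4 and chaining through Step 3 gives the claim.

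The main obstacle is Step 2: realising the Maurey approximation uniformly well across all $n$ samples under only the weak-moment assumption. The $\|\mathbb{X}\|_\infty$ control loses a $\log^\zeta(p)$ factor relative to the sub-Gaussian case, which propagates into the sparsity budget $m$ and therefore into the $\log^{2\zeta+1}(p)$ exponent in~\eqref{def eta}. A secondary difficulty is the transfer of the anti-concentration-based mean bound from $\beta$ to $\bar\beta$: one must verify that $\|\bar\beta\|_2$ does not collapse and that $d(\bar\beta,\beta^*) \geq d(\beta,\beta^*)/2$, both of which follow from the Hilbert-space Maurey estimate $\|\bar\beta - \beta\|_2 \lesssim a/\sqrt m$ together with the anti-concentration of the coordinates of $X$.
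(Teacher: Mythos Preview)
Your proposal is correct and follows essentially the same strategy as the paper's proof: construct a Maurey cover of $aB_1^p$ of cardinality $(2p+1)^m$ with $m \asymp a^2\log^{2\zeta}(p)\log(n)/\eta^2$ (your stated $m$ has one superfluous $\log(p)$, which is absorbed later by the union bound anyway), use the $\ell_\infty$-control of $\mathbb X$ to couple $\beta$ to its net point $\bar\beta$ in all $n$ inner products, transfer the mean lower bound $d(\beta,\beta^*)\gtrsim\eta^\alpha$ to $\bar\beta$ via anti-concentration and the Hilbert-space Maurey bound $\|\beta-\bar\beta\|_2\lesssim a/\sqrt m$, and finish with Bernstein plus a union bound over the net. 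The paper carries this out directly on the sign indicator rather than through your truncated functional $F_t$, but the two presentations are interchangeable.
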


    Now, we apply Proposition \ref{proposition tesselation maurey} with $a \asymp  r_n$. 
    Since $|\mathcal{O}| \lesssim \eta^\alpha$ by assumption, we get $$
    \frac{1}{n} \sum_{i=1}^n \mathbf{1} \left ( \sgn{\langle X_i,\tilde \beta/\|\tilde \beta\|_2 \rangle } \neq \sgn{\langle X_i,\beta^* \rangle}  \right ) = \frac{|\mathcal{O}|}{n} \lesssim  \eta^\alpha,$$ and hence, adjusting constants, we have on an event of probability at least $1-t-cp^{-1}$ that $\tilde \beta /\|\tilde \beta\|_2 \notin \mathcal{B}(a,\eta)$ and hence on the same event $d(\tilde \beta,\beta^*) \lesssim \eta^\alpha$. \\
    
    When $\mathbb X$ satisfies the small deviation assumption with parameter $\theta>0$, we apply Proposition \ref{proposition tesselation maurey} with $a \asymp r_n/\tau_n$. Since $|\mathcal{O}| \lesssim \eta^{\alpha\left (1+\frac{2}{\theta}\right )}$ by assumption, we conclude the proof using the same reasoning. 
\end{proof}

\subsection{Upper and lower bounds for the max $\ell_1$-margin}

\subsubsection{Proof of Theorem \ref{theorem lower bound margin} } 
We start this section with the following lemma. A proof is given in~\cite{LiangSur20}. 
\begin{lemma} [Proposition A.2 in \cite{LiangSur20}] \label{lemma margin sur}
Suppose that 
$$\gamma:=\max_{\beta \neq 0} \min_{1 \leq i \leq n} \frac{y_i \langle  X_i, \beta \rangle}{\|\beta\|_1} >0 .
$$ Then, we have that $\gamma^{-1} =  \| \hat \beta \|_1$, where 
    \begin{equation} \label{def beta hat}
            \hat \beta \in \argmin_{\beta \in \mathbb R^p} \left \{  \| \beta \|_1 \quad \textnormal{subject to} \quad y_i\langle X_i, \beta \rangle \geq 1 \right \}.
    \end{equation}
\end{lemma}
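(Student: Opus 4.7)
The plan is to establish $\gamma^{-1}=\|\hat\beta\|_1$ by a pair of elementary scaling arguments, one for each inequality. The key observation is that the ratio $\min_i y_i \langle X_i,\beta \rangle / \|\beta\|_1$ is positively homogeneous of degree zero in $\beta$, so one can freely rescale any candidate vector. I would therefore normalize on the $\ell_1$-unit sphere for the max-margin side and rescale to satisfy the constraint $y_i \langle X_i, \beta\rangle \geq 1$ on the min-norm side.

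For the inequality $\|\hat\beta\|_1 \leq \gamma^{-1}$, I would first note that, since $\gamma>0$, for every $\epsilon>0$ there exists $\beta_\epsilon \neq 0$ with $\min_i y_i \langle X_i,\beta_\epsilon\rangle / \|\beta_\epsilon\|_1 \geq \gamma - \epsilon$. Setting $\tilde\beta_\epsilon := \beta_\epsilon/\bigl((\gamma - \epsilon)\|\beta_\epsilon\|_1\bigr)$, one checks $y_i \langle X_i,\tilde\beta_\epsilon\rangle \geq 1$ for every $i$, so $\tilde\beta_\epsilon$ is feasible for the program defining $\hat\beta$ in \eqref{def beta hat}. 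Since $\|\tilde\beta_\epsilon\|_1 = (\gamma-\epsilon)^{-1}$, optimality of $\hat\beta$ yields $\|\hat\beta\|_1 \leq (\gamma-\epsilon)^{-1}$; letting $\epsilon\downarrow 0$ gives the bound.

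For the reverse inequality $\gamma^{-1} \leq \|\hat\beta\|_1$, I would rescale the minimizer. Existence of $\hat\beta$ follows because the feasible set in \eqref{def beta hat} is closed, convex, and nonempty (the positivity of $\gamma$ supplies a feasible point after rescaling), and the $\ell_1$-norm is coercive on this set. Then $\tilde\beta := \hat\beta/\|\hat\beta\|_1$ has unit $\ell_1$-norm and satisfies $\min_i y_i \langle X_i,\tilde\beta\rangle \geq 1/\|\hat\beta\|_1$. Plugging $\tilde\beta$ into the variational definition of $\gamma$ shows $\gamma \geq 1/\|\hat\beta\|_1$, and combining the two directions completes the proof.

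I do not expect any serious obstacle here: this is a direct duality between a max-margin formulation and a min-norm interpolation formulation, analogous to the classical hard-margin SVM duality but with $\ell_1$ in place of $\ell_2$. The only subtle point is verifying that the argmin in \eqref{def beta hat} is attained, which I would justify briefly via closedness of the feasible set and coercivity of $\|\cdot\|_1$; if attainment failed, the same argument would still give $\gamma^{-1} = \inf\{\|\beta\|_1 : y_i\langle X_i,\beta\rangle\geq 1\}$, which is all that is needed downstream.
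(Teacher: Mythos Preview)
Your proof is correct. The paper does not actually prove this lemma; it simply cites Proposition~A.2 in \cite{LiangSur20}. Your elementary two-inequality scaling argument is exactly the standard way to establish such max-margin/min-norm equivalences and is self-contained, whereas the paper defers to the external reference.
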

Hence, in order to lower bound $\gamma$ it suffices to upper bound $\| \hat \beta \|_1$, which is accomplished in the following proposition. 
\begin{proposition} \label{prop upper bound l1 norm}
Assume $p \gtrsim n$ and that $\mathbb{X}=(X_i)_{i \in [n]}$ has i.i.d. symmetric, zero mean and unit variance entries and satisfies the weak moment assumption with $\zeta \geq 1/2$ and the anti-concentration assumption  with
$\alpha \in (0,1]$. 
Then with probability at least $1-cn^{-2}$ we have that

\begin{align} \label{bound l1 norm heavy}
     \|\hat \beta\|_1 \lesssim  \left [ \frac{n}{\log(ep/n)} \left(  s + \frac{\log^{1+2\zeta}(n)| \mathcal O| }{\log(ep/n)} + \log^{1+ 2\zeta}(n)\right)^{\alpha/2} \right ]^{1/(2+\alpha)}. 
 \end{align}
\end{proposition}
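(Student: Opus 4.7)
By Lemma~\ref{lemma margin sur}, $\gamma^{-1}=\|\hat\beta\|_1$, so it suffices to exhibit any $\beta$ satisfying $y_i\langle X_i,\beta\rangle\geq 1$ for all $i\in[n]$ and control its $\ell_1$-norm, because $\|\hat\beta\|_1\leq\|\beta\|_1$. The plan is to build such a $\beta$ of the form $\beta=\lambda\beta^*+h$, where $\lambda:=1/\tau$ rescales $\beta^*$ for a threshold $\tau>0$ to be optimized, and $h\in\mathbb R^p$ is a correction vector supplied by the $\ell_1$-quotient property. Setting $B_\tau:=\{i\notin\mathcal O:|\langle X_i,\beta^*\rangle|<\tau\}$ and $\mathcal G:=B_\tau\cup\mathcal O$, any $h$ with $\mathbb X^T h = u$, where $u_i:=y_i-\lambda\langle X_i,\beta^*\rangle$ on $\mathcal G$ and $u_i:=0$ elsewhere, renders $\beta$ feasible, and yields $\|\hat\beta\|_1\leq \sqrt{s}/\tau+\|h\|_1$ since $\lambda\|\beta^*\|_1\leq \sqrt{s}/\tau$.

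Three pieces then need to be controlled. First, anti-concentration applied to the fixed unit vector $\beta^*$ gives $\mathbb P(|\langle X_i,\beta^*\rangle|<\tau)\lesssim\tau^\alpha$, and Bernstein's inequality for the i.i.d.\ Bernoulli sum $|B_\tau|$ yields $|B_\tau|\lesssim n\tau^\alpha+\log n$ with probability at least $1-cn^{-2}$. Second, I would split $\|u\|_2^2$ according to $\mathcal G = B_\tau\cup\mathcal O$: on $B_\tau$ each $u_i^2\leq 1$, while on $\mathcal O$ one has $u_i^2\leq 2+2\lambda^2|\langle X_i,\beta^*\rangle|^2$. A weak-moment tail bound via Markov's inequality at order $q=\log p$ together with a union bound over $i\in[n]$ gives $\max_{i\in[n]}|\langle X_i,\beta^*\rangle|\lesssim \log^\zeta(n)$ with probability at least $1-cn^{-2}$, so that $\|u\|_2^2\lesssim n\tau^\alpha+\log n+|\mathcal O|(1+\lambda^2\log^{2\zeta}(n))$.

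Third, to bound $\|h\|_1$, I would invoke an $\ell_1$-quotient property for the transposed design $\mathbb X^T$ adapted to the weak-moment regime, in the spirit of Wojtaszczyk~\cite{Wojtaszczyk10} and Krahmer--K\"ummerle--Rauhut~\cite{KrahmerKuemmerleRauhut18}: on an event of probability at least $1-cn^{-2}$, every $u\in\mathbb R^n$ admits a preimage $h\in\mathbb R^p$ with $\mathbb X^T h = u$ and $\|h\|_1\lesssim \|u\|_2\sqrt{\mathrm{polylog}(n)/\log(ep/n)}$. Plugging the bound on $\|u\|_2^2$ into $\|\hat\beta\|_1\leq \sqrt{s}/\tau+\|h\|_1$ and picking $\tau$ to balance $\sqrt{s}/\tau$ against the $h$-term yields $\tau^{2+\alpha}\asymp s_{\mathrm{eff}}\log(ep/n)/n$ with $s_{\mathrm{eff}}=s+\log^{1+2\zeta}(n)|\mathcal O|/\log(ep/n)+\log^{1+2\zeta}(n)$, which, upon substitution, reproduces~\eqref{bound l1 norm heavy}.

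The main obstacle will be establishing the $\ell_1$-quotient property for $\mathbb X^T$ under only the weak-moment assumption of order $\log p$, rather than sub-Gaussianity, with the correct polylog overhead. The classical proofs proceed through Gaussian concentration or sub-Gaussian net arguments and break down in the heavy-tailed regime; one must instead combine truncation with a small-ball/duality argument and carefully track the resulting logarithmic losses so that they line up with the $\log^{1+2\zeta}(n)$ factors that appear in $s_{\mathrm{eff}}$. Secondary, purely bookkeeping, difficulties are to combine all the high-probability events into a single $1-cn^{-2}$ statement and to perform the optimization over $\tau$ cleanly when the three terms $n\tau^\alpha$, $\log n$, and $|\mathcal O|\log^{2\zeta}(n)/\tau^2$ compete inside $\|u\|_2^2$.
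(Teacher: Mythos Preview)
Your approach is essentially the same as the paper's: construct a feasible point of the form $\lambda\beta^*+h$ with $\lambda=1/\tau$, use the $\ell_1$-quotient property to supply the correction $h$, and optimize over $\tau$. The paper packages the construction slightly differently via a ``lifting function'' $f_\varepsilon$ and writes the feasible point as $(\beta^*-\hat\nu)/\varepsilon$, but after unwinding the bookkeeping the two constructions coincide.

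Two small corrections. First, what you call the ``main obstacle'' is not one: the quotient property under the weak moment assumption is exactly Theorem~5 of Krahmer--K\"ummerle--Rauhut~\cite{KrahmerKuemmerleRauhut18}, which the paper simply cites (Proposition~\ref{quotient property}). Its statement is $\|h\|_1\lesssim \|u\|_2/\sqrt{\log(ep/n)}+\|u\|_\infty$, with no polylog overhead on the $\ell_2$-term but with an additive $\|u\|_\infty$ that you omitted; this $\|u\|_\infty$ contribution (of order $\log^{1/2+\zeta}(n)/\tau$) is precisely what produces the stand-alone $\log^{1+2\zeta}(n)$ summand in $s_{\mathrm{eff}}$. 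Second, the correct bound is $\max_{i\in[n]}|\langle X_i,\beta^*\rangle|\lesssim \log^{1/2+\zeta}(n)$, not $\log^\zeta(n)$: the Marcinkiewicz--Zygmund/Rosenthal step (Lemma~\ref{control infty norm log moments}) contributes an extra $\sqrt{q}$, and this is what makes the exponents in $s_{\mathrm{eff}}$ come out as $1+2\zeta$.
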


\begin{proof}
 We prove Proposition \ref{prop upper bound l1 norm} by explicitly constructing a $\beta$ that fulfills the constraints in \eqref{def beta hat}. 
For $\varepsilon >0$, we define a lifting function  $f_{\varepsilon}: \mathbb R \rightarrow \mathbb R$
$$
f_{\varepsilon}(x) :=  \left\{
    \begin{array}{ll}
        x-\varepsilon & \mbox{if } 0 \leq x \leq \varepsilon \\
        x+\varepsilon & \mbox{if } -\varepsilon \leq x < 0 \\
        0  & \mbox{otherwise. }
    \end{array}
\right.
$$
For $i \in [n]$, we denote $$z_i = \begin{cases} & f_{\varepsilon} \big( \langle X_i, \beta^* \rangle ) ~~~~~~~~~~~~~~~~~~~i \notin \mathcal{O} \\ 
& 2 \langle X_i, \beta^*\rangle-f_{\varepsilon} \big( \langle X_i, \beta^* \rangle ) ~~~i \in \mathcal{O}\\
\end{cases}$$ and $Z = (z_1,\cdots,z_n)^T$. Finally, we define
\begin{align} \label{eq truncation interpolation}
    \hat \nu \in \argmin_{\beta \in \mathbb R^p} \| \beta \|_1 \quad \text{subject to}~~ \langle X_i,\beta \rangle= z_i, ~~~i=1, \dots, n.
\end{align}

By definition of $\hat \nu$, if $i \in \mathcal{O}$, we have the decomposition
\begin{align*}
\langle X_i, \beta^* - \hat \nu \rangle & = -\langle X_i,\beta^* \rangle + f_{\varepsilon} \big( \langle X_i, \beta^* \rangle \big) \\
& =   \left\{
    \begin{array}{ll}
     -\langle X_i, \beta^* \rangle & \mbox{if }  |\langle X_i, \beta \rangle^* | \geq \varepsilon \\
        -\varepsilon & \mbox{if } 0 \leq \langle X_i, \beta^* \rangle \leq \varepsilon \\
         \varepsilon  & \mbox{if } - \varepsilon \leq \langle X_i, \beta^* \rangle < 0.
    \end{array}
\right.
\end{align*}
A similar decomposition with each equation above multiplied with $-1$ holds if $i \notin \mathcal{O}$. Hence, we have that $\sgn{\langle X_i, \beta^*-\hat \nu \rangle}=y_i$ and $|\langle X_i, \beta^*-\hat \nu \rangle|\geq \varepsilon$ for $i=1, \dots,n$. It follows that
\begin{align*}
    \| \hat \beta \|_1 \leq \frac{\|\beta^*-\hat \nu\|_1}{\varepsilon} \leq \frac{\sqrt{s}}{\varepsilon} + \frac{\|\hat \nu \|_1}{\varepsilon}. 
\end{align*}
We now apply Proposition~\ref{quotient property} and obtain that with probability at least $1-2\exp(-2n)$
$$
\|\hat \nu\|_1 \lesssim  \frac{\|  Z \|_2}{\sqrt{\log(ep/n)   }} + \| Z \|_{\infty}. 
$$ 
By Lemma \ref{control infty norm log moments} we have with probability at least $1-n^{-2}$ that
\begin{align*}
    \| Z\|_\infty \leq \varepsilon +  \max_{i \in [n]}|\langle X_i, \beta^* \rangle| \lesssim \varepsilon + \log^{1/2+\zeta}(n). 
\end{align*}
It is left to bound $\| Z\|_2$. By the triangle inequality,  we have that
\begin{align} \label{proof bound Z 1}
    \| Z\|_2 \leq 2 \sqrt{\sum_{i \in \mathcal{O}} |\langle X_i, \beta^* \rangle|^2} + \sqrt{\sum_{i=1}^n f_\varepsilon(\langle X_i, \beta^* \rangle)^2}. 
\end{align}
By Lemma~\ref{control infty norm log moments} we have with probability at least $1-n^{-2}$ 
\begin{align*}
    \sum_{i \in \mathcal{O}} |\langle X_i, \beta^* \rangle|^2 \leq | \mathcal O| \max_{i \in [n]} |\langle X_i, \beta^* \rangle|^2 \lesssim  |\mathcal{O}| \log^{1+2 \zeta}(n). 
\end{align*}
We next bound the second term on the right hand side in \eqref{proof bound Z 1}. Indeed, we have that
 \begin{align}
     \frac{1}{n} \sum_{i=1}^n f_\varepsilon(\langle X_i, \beta^*\rangle)^2 & =\frac{1}{n} \sum_{i=1}^n (|\langle X_i, \beta^*\rangle |-\varepsilon)^2 \mathbf{1}(|\langle X_i, \beta^*\rangle| \leq \varepsilon) \notag \\ 
    & \leq
     \frac{ \varepsilon^2 }{n} \sum_{i=1}^n \mathbf{1}(|\langle X_i, \beta^*\rangle| \leq \varepsilon ) \label{proof bound Z 2}
 \end{align}

Let $p_{\varepsilon} =  \mathbb P (|\langle X_1, \beta^* \rangle | \leq \varepsilon )$. By Hoeffding's inequality, Theorem 3.1.2 in \cite{GineNickl16}, we have with probability at least $1-\exp(-2n\varepsilon^{2\alpha})$ that 
 \begin{align*}
     \frac{1}{n} \sum_{i=1}^n f_\varepsilon(\langle X_i, \beta^*\rangle)^2 \leq \frac{ \varepsilon^2 }{n} \sum_{i=1}^n \mathbf{1}(|\langle X_i, \beta^*\rangle| \leq \varepsilon ) \leq   \varepsilon^2 \left( p_{\varepsilon}+ \varepsilon^{\alpha} \right) \lesssim  \varepsilon^{2+\alpha} , 
 \end{align*}
   where the last inequality holds by the anti-concentration assumption and for $p^{-1} \leq \varepsilon \leq 1$.
 Hence, summarizing, we have with probability at least $1-e^{-2n\varepsilon^{2\alpha}}-n^{-2}- 2\exp(-2n)$ that 
 \begin{align*}
      \| \hat \nu \|_1 \lesssim  \frac{\log^{1/2+ \zeta}(n)| \mathcal O |^{1/2} +n^{1/2} \varepsilon^{1+\alpha/2} }{\sqrt{\log(ep/n)}}   
  +\varepsilon + \log(n)^{1/2+\zeta}
 \end{align*}

Choosing  $$\varepsilon \asymp \left ( \frac{s \log(ep/n)}{n} + \frac{|\mathcal{O}|\log(n)^{1+2 \zeta}}{n } \right )^{\frac{1}{2+\alpha}}$$
concludes the proof.
\end{proof}

\subsubsection{Proof of Proposition \ref{prop margin upper bound}}
\begin{proof}

By the dual formulation of the margin (see Appendix~\ref{dual formulation}), we have that
\begin{align} \label{margin dual}
    \gamma=\inf_{w: ~w_i \geq 0~\forall i \in [n], \|w\|_1=1} \left \|\sum_{i=1}^n w_i y_i X_i \right\|_\infty. 
\end{align}
Hence, for proving an upper bound it suffices to find an appropriate weighting $w$. For $\tau_n$ a sequence to be defined and $\tau_n^{-1}$ taking integer values, we define
 \begin{align*}
  w_i=  \begin{cases}  & \tau_n  ~~~~~i ~\text{is among indices of} ~\tau_n^{-1} ~\text{smallest entries of} (|\langle X_i, \beta^* \rangle |)_{i=1}^n \\
 & 0~~~~~~\text{otherwise}.
    \end{cases}
\end{align*}
We use this choice of $w$ to upper bound $\gamma$. 
We denote the projector onto the space spanned by $\beta^*$ by $P$, $P:=\beta^* (\beta^*)^T$, and define its orthogonal complement $P^\perp:=I_p-P$. 
We have that
\begin{align} \label{proof lower decomp 1}
    \left\| \sum_{i=1}^n w_i y_i X_i \right\|_\infty \leq \left\| \sum_{i=1}^n w_i y_i PX_i \right\|_\infty + \left\| \sum_{i=1}^n w_i y_i P^\perp X_i \right\|_\infty .
\end{align}
We treat the two terms separately.
For the first term, we have by Theorem 5 and Theorem 7 in \cite{GordonLitvakSchuttWerner06} that
\begin{align*}
 \mathbb{E}  \left  \| \sum_{i=1}^n w_i y_i PX_i \right \|_\infty & =   \mathbb{E}\left  \| \sum_{i=1}^n w_i |\langle X_i, \beta^* \rangle| \beta^* \right  \|_\infty =\left  \|\beta^* \right \|_\infty \mathbb{E} \sum_{i=1}^n w_i |\langle X_i, \beta^* \rangle|   \\
 & \lesssim \|\beta^*\|_\infty\sum_{k=1}^{\tau_n^{-1}} \frac{\tau_n k \log(k+1)}{ n}  \lesssim \frac{ \|\beta^*\|_\infty (\tau_n^{-1}+1) \log(p)}{ n }  .
\end{align*}
We next bound the second term on the right hand side in \eqref{proof lower decomp 1}. Observe that $y_i=\text{sgn}(\langle X_i, \beta^*\rangle)=\text{sgn}(\langle PX_i, \beta^* \rangle)$ and hence $y_i$ is independent of $P^\perp X_i$. Likewise, $w$ is a function of $(PX_i)_{i}$ and not $(P^\perp X_i)_i$ and hence $w$ and $P^\perp X_i$ are independent for each $i$. We conclude that \begin{align*}
   \left ( \sum_i w_i y_i P^\perp X_i \right )_j \thicksim \mathcal{N}(0, \|w\|_2^2 \langle e_j, P^\perp e_j\rangle).
\end{align*}
Hence, using a standard Chernoff-bound, we obtain
\begin{align*}
  \mathbb{E}  \left   \| \sum_i w_i y_i PX_i \right  \|_\infty & \lesssim  \sqrt{\log(p)\|w\|_2^2}=\sqrt{\log(p)\tau_n}. 
\end{align*}
Hence, we obtain
\begin{align*}
\mathbb{E} \gamma  \leq \mathbb{E} \left  \|\sum_{i=1}^n w_i y_i X_i \right  \|_\infty \lesssim  \frac{ \|\beta^*\|_\infty \log(p)}{ n\tau_n } + \sqrt{ \log(p)\tau_n}. 
\end{align*}
The final result is obtained by choosing $$\tau_n^{-1}= \left \lceil  \left ( \frac{n}{\|\beta^*\|_\infty \log(p)} \right )^{2/3}\right \rceil. $$
\end{proof}

\subsection{Proof Proposition~\ref{control norm ratio heavy tail}}

\subsubsection{Proof of the first part of Proposition~\ref{control norm ratio heavy tail}
}
In this subsection, we present a result holding only under the weak moment assumption.  We will see in the next section how to improve this result when assuming a small deviation assumption. 
\begin{proposition} \label{control norm ratio heavy tail no small dev}
Assume that $\mathbb{X}=(X_i)_{i \in [n]}$ has i.i.d. zero mean and unit variance entries and satisfies the weak moment assumption with $\zeta \geq 1/2$.  Suppose that $r_n >0$ satisfies
$$
r_n \lesssim  \sqrt\frac{n}{\log(p)}.
$$
 Then, with probability at least 
$$
1- np^{-2} - 2\exp \left( - \frac{cn}{r_n^2\log^{2\zeta}(p)} \right) ,
$$
for any  $\beta \in \mathbb R^p$ such that $\| \beta \|_1 \leq r_n$ and $\min_{i \in [n]}  y_i \langle  X_i , \beta \rangle \geq 1$, we have that $\| \beta \|_2 \geq 1/2$.   
\end{proposition}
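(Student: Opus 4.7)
The proof should proceed by contradiction. Suppose there exists $\beta$ with $\|\beta\|_1 \leq r_n$, $\min_{i \in [n]} y_i\langle X_i, \beta\rangle \geq 1$, and $\|\beta\|_2 < 1/2$. The interpolation inequalities force $y_i\langle X_i,\beta\rangle = |\langle X_i,\beta\rangle| \geq 1$ for every $i$, hence
$$\frac{1}{n}\sum_{i=1}^n |\langle X_i, \beta\rangle| \;\geq\; 1.$$
On the other hand, for any $\beta$ in the convex set $K := r_n B_1^p \cap \tfrac{1}{2}B_2^p$, Jensen's inequality combined with the unit-variance assumption yields $\mathbb{E}|\langle X_i,\beta\rangle| \leq \|\beta\|_2 \leq 1/2$. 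Thus it is enough to establish, with the stated probability,
$$\sup_{\beta \in K}\left|\frac{1}{n}\sum_{i=1}^n |\langle X_i, \beta\rangle| - \mathbb{E}|\langle X_i,\beta\rangle|\right| \;<\; \frac{1}{2},$$
which directly contradicts the previous display.

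I would handle the expectation of the supremum by standard symmetrization followed by the contraction principle applied to the $1$-Lipschitz map $t\mapsto |t|$, reducing it to the Rademacher complexity
$$\frac{4 r_n}{n}\mathbb{E}\Bigl\|\sum_{i=1}^n \epsilon_i X_i\Bigr\|_\infty$$
via $\ell_1$–$\ell_\infty$ duality, where we only use $\beta \in r_n B_1^p$. Conditionally on $\mathbb{X}$ the coordinates of $\sum_i \epsilon_i X_i$ are sub-Gaussian with variance $\sum_i x_{ij}^2$, so a standard maximal inequality combined with the weak moment assumption gives the expected coordinate-wise maximum $\lesssim \sqrt{n\log(p)}$. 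This renders the expected supremum at most $C r_n \sqrt{\log(p)/n}$, which, by the hypothesis $r_n \lesssim \sqrt{n/\log(p)}$ with a sufficiently small hidden constant, is smaller than $1/4$.

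For the concentration of the supremum around its mean I would restrict to the event $\mathcal{E} := \{\|\mathbb{X}\|_\infty \leq C\log^{\zeta}(p)\}$. By Proposition~\ref{control infty norm log moments} together with a union bound over the $n$ columns of $\mathbb{X}$, $\mathbb{P}(\mathcal{E}^c) \leq n p^{-2}$, producing the first failure probability. On $\mathcal{E}$, each summand obeys $|\langle X_i,\beta\rangle| \leq \|X_i\|_\infty \|\beta\|_1 \leq C r_n \log^{\zeta}(p)$, so the bounded-differences inequality applied to the supremum with increment constant $c_i = Cr_n\log^{\zeta}(p)/n$ produces a deviation probability of the form $2\exp(-cn/(r_n^2 \log^{2\zeta}(p)))$ at the constant level $t = 1/4$, matching the second failure term. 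The main obstacle is precisely the uncomfortable interaction of heavy-tailed features with Talagrand/McDiarmid-type concentration: boundedness is only available after the truncation to $\mathcal{E}$, which explains both the additive price $np^{-2}$ and the factor $\log^{2\zeta}(p)$ inside the concentration exponent. Verifying that the three error pieces—mean $\leq 1/2$, expected supremum $<1/4$, and concentration $<1/4$—combine strictly below $1$ is where the small-constant version of the hypothesis $r_n \lesssim \sqrt{n/\log(p)}$ must be used carefully.
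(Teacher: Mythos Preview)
Your proposal is correct and follows the paper's own argument almost verbatim: contradiction via $\tfrac{1}{n}\sum_i|\langle X_i,\beta\rangle|\geq 1$, the Jensen bound $\mathbb{E}|\langle X_1,\beta\rangle|\leq \|\beta\|_2\leq 1/2$, symmetrization plus contraction for the expected supremum, and the bounded differences inequality on the event $\{\|\mathbb{X}\|_\infty\lesssim \log^\zeta(p)\}$ (costing $np^{-2}$) to obtain the concentration term $2\exp(-cn/(r_n^2\log^{2\zeta}(p)))$.

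The only point of divergence is how the Rademacher complexity is controlled. The paper appeals to a separate proposition that passes from $r_nB_1^p\cap B_2^p$ to sparse vectors and bounds sums of order statistics of $W=n^{-1/2}\sum_i\sigma_iX_i$, obtaining $r_n\sqrt{\log(p)/n}$. You instead use direct $\ell_1$--$\ell_\infty$ duality, $\sup_{\beta\in r_nB_1^p}\langle W,\beta\rangle=r_n\|W\|_\infty$, and then $\mathbb{E}\|W\|_\infty\lesssim\sqrt{\log(p)}$. Both yield the same bound; your route is shorter because the $B_2^p$ intersection contributes nothing here. One caution: your phrasing ``conditionally on $\mathbb{X}$ sub-Gaussian with variance $\sum_i x_{ij}^2$'' followed by a maximal inequality, if carried out literally, leaves you needing $\mathbb{E}\max_j(\sum_i x_{ij}^2)^{1/2}\lesssim\sqrt{n}$, which under the weak moment assumption only gives $\sqrt{n}\log^\zeta(p)$ and would cost an extra $\log^\zeta(p)$. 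The clean way (and what the paper effectively does) is to bound the unconditional moments $(\mathbb{E}|W_j|^q)^{1/q}\lesssim\sqrt{q}$ directly via Khintchine and the unit-variance assumption, then take $q=\log(p)$.
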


\begin{proof}
For $r_n >0$, let $\beta \in \mathbb R^p$ such that $\| \beta \|_1 \leq r_n$ and $\min_{i \in [n]} y_i \langle X_i , \beta \rangle \geq 1$. Thus, we have
\begin{equation} \label{proof absurd final}
    \frac{1}{n} \sum_{i=1}^n |\langle X_i, \beta \rangle|  \geq 1 .
\end{equation}
We proceed by contradiction. Assume that $\| \beta \|_2 \leq 1/2$. In this case, we show that Equation~\eqref{proof absurd final} is not satisfied with large probability, concluding the proof by contradiction.  \\
For $i \in [n]$, using H\"older's inequality, we have that 
$$
| \langle X_i , \beta \rangle|  \leq \| X_i \|_{\infty} \| \beta \|_1 \leq r_n \| X_i \|_{\infty} \lesssim r_n \log^{\zeta}(p) ,
$$
where the last inequality follows from Lemma~\ref{control infty norm log moments} and holds with probability at least $1-p^{-2}$. Thus, with probability at least $1-n/p^2$ we have, for all $i \in [n]$, that $| \langle X_i , \beta \rangle|   \lesssim r_n  \log^{\zeta}(p)$. Hence, conditioning on this event and using the bounded differences inequality, Theorem 3.3.14 in \cite{GineNickl16}, we obtain with probability at least 
$$
1-2\exp \left( - \frac{cn}{ r_n^2 \log^{2 \zeta}(p)} \right) - n/p^2$$ 
that we have
\begin{align*}
    \sup_{\beta \in r_n B_1^p \cap (1/2)B_2^p}  \frac{1}{n} \sum_{i=1}^n |\langle X_i, \beta \rangle| & \leq \sup_{\beta \in r_n B_1^p \cap (1/2)B_2^p} \mathbb E | \langle X_1, \beta \rangle | \\
    & + \mathbb E \sup_{\beta \in r_n B_1^p \cap (1/2)B_2^p}  \frac{1}{n} \sum_{i=1}^n |\langle X_i, \beta \rangle| - \mathbb E | \langle X_i, \beta \rangle |  + \frac{1}{4}.
\end{align*}
By Jensen's inequality and the fact  $X$ is isotropic with unit variance, we obtain that 
$$
\sup_{\beta \in r_n B_1^p \cap (1/2) B_2^p} \mathbb E | \langle X_1, \beta \rangle | \leq 1/2 . 
$$
Moreover, we have that 
\begin{align*}
     \mathbb E \sup_{\beta \in r_n B_1^p \cap (1/2)B_2^p}  \frac{1}{n} \sum_{i=1}^n |\langle X_i, \beta \rangle| - \mathbb E | \langle X_i, \beta \rangle |  & \leq  \mathbb E \sup_{\beta \in r_n B_1^p \cap (1/2)B_2^p}  \frac{4}{n} \sum_{i=1}^n \sigma_i \langle X_i, \beta \rangle \\
     & \lesssim  r_n \sqrt \frac{\log(p)}{n},
\end{align*}
where $(\sigma_i)_{i=1}^n$ are i.i.d Rademacher random variables independent from $(X_i)_{i=1}^n$. We used in the first line the symmetrization and contraction principles, Theorem 3.1.21 and Theorem 3.2.1. in \cite{GineNickl16} and Proposition~\ref{control Rademacher complexity low moments} in the second line to bound the Rademacher complexity. \\
The condition on $r_n$ shows that
$$
   \sup_{\beta \in r_n B_1^p \cap (1/2)B_2^p}  \frac{1}{n} \sum_{i=1}^n |\langle X_i, \beta \rangle|<1, 
$$
and the contradiction is established. 

\end{proof}

\subsubsection{Proof of the second part of Proposition~\ref{control norm ratio heavy tail}: small deviation assumption}

In this subsection, we show how to prove the second part of Proposition~\ref{control norm ratio heavy tail} under the small deviation assumption~\ref{def small deviation}. 

\begin{proposition} \label{minimum.theorem} 
Assume $p \gtrsim n$ and that $\mathbb{X}=(X_i)_{i \in [n]}$ has i.i.d. zero mean and unit variance entries and satisfies the weak moment assumption with $\zeta \geq 1/2$.
Moreover, assume that $\mathbb{X}$ fulfills a small deviation assumption, Definition~\ref{def small deviation}, with constant $\theta >0$.  Let $r_n \geq 1$,
define
$$ \tau_n =c \left ( \frac{n}{\log^{2\zeta+1}(p)\log(n)r_n^2}\right )^{\frac{1}{\theta}}
$$
and suppose that $\tau_n \gtrsim 1 $. 
 Then, with probability at least $1-p^{-1}$ for any $\beta \in \mathbb{R}^p$ such that $\|\beta\|_1 \leq r_n$ and $\min_{i \in [n]} y_i \langle X_i, \beta \rangle \geq 1 $, we have that $\| \beta\|_1 / \|\beta\|_2 \lesssim r_n/\tau_n$. 
\end{proposition}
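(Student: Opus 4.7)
The plan is to argue by contradiction, using the small-deviation assumption together with a Maurey-based discretization of the effectively-sparse sphere to derive a violation of the interpolation constraint whenever $\|\beta\|_1/\|\beta\|_2$ exceeds $c\,r_n/\tau_n$. Suppose such a $\beta$ existed, set $\bar\beta = \beta/\|\beta\|_2 \in \mathcal{S}^{p-1}$, and let $L := \|\bar\beta\|_1 > c\,r_n/\tau_n$. The interpolation constraint $\min_i y_i\langle X_i,\beta\rangle\geq 1$ rewrites as
\[
|\langle X_i,\bar\beta\rangle| \;\geq\; \frac{1}{\|\beta\|_2} \;=\; \frac{L}{\|\beta\|_1} \;\geq\; \frac{L}{r_n} \qquad \text{for every } i\in[n].
\]
The aim is to produce, with probability at least $1-p^{-1}$ and uniformly over $\bar\beta \in \mathcal{S}^{p-1}\cap L B_1^p$, at least one (indeed of order $n\eta^\theta$ many) indices $i$ with $|\langle X_i,\bar\beta\rangle| < L/r_n$, which would contradict the display.

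The key ingredient is a uniform empirical small-ball estimate. For a fixed $\bar\beta$ on the sphere, the small-deviation assumption gives $\mathbb{P}(|\langle X,\bar\beta\rangle|\leq \eta) \gtrsim \eta^\theta$, and a Chernoff/Hoeffding bound produces the pointwise statement $\#\{i:|\langle X_i,\bar\beta\rangle|\leq \eta\} \gtrsim n\eta^\theta$ with failure probability of order $\exp(-cn\eta^{2\theta})$. To upgrade this to a uniform statement over $\mathcal{S}^{p-1} \cap L B_1^p$, I would invoke Maurey's empirical method: every such $\bar\beta$ is the expectation of a randomized $m$-term average of signed coordinates $\pm L\,e_{j_k}/m$, and a Hoeffding bound on this Maurey sampling, conditional on the high-probability event from Lemma~\ref{control infty norm log moments} that $\max_i \|X_i\|_\infty \lesssim \log^\zeta(p)$, provides an $m$-sparse surrogate $\bar\beta'$ in a net $\mathcal{N}_m$ of cardinality at most $(2p)^m$ that satisfies
\[
\max_{i\in[n]} |\langle X_i,\bar\beta - \bar\beta'\rangle| \;\lesssim\; L\log^{\zeta}(p)\sqrt{\log(n)/m}.
\]
A union bound over $\mathcal{N}_m$ combined with the pointwise estimate applied at the slightly-reduced radius $\eta/2$ (to absorb the Maurey error) would then yield $\#\{i: |\langle X_i,\bar\beta\rangle|\leq \eta\} \gtrsim n\eta^\theta$ uniformly, provided three conditions hold simultaneously: (i) $m \gtrsim L^2\log^{2\zeta+1}(p)\log(n)/\eta^2$ so that the Maurey error is $\leq \eta/2$; (ii) $n\eta^{2\theta} \gtrsim m\log(p)$ so that the union bound is non-trivial; and (iii) $\eta$ is chosen just below $L/r_n$ so that the conclusion contradicts the interpolation constraint.

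Eliminating $m$ and $\eta$ between these three conditions is what pins down the rate $\tau_n$: once $L$ exceeds $c\,r_n/\tau_n$ with the stated $\tau_n$, the three constraints become jointly feasible and the contradiction is obtained. The main obstacle is precisely this joint calibration under only the weak moment assumption. Because the Maurey approximation error inherits the heavy-tail factor $\log^{\zeta}(p)\sqrt{\log(n)}$ from the $\ell_\infty$ control of the columns, the sparsity $m$ must be taken larger than in the sub-Gaussian case, which sharpens the entropy requirement (ii) and produces the $\log^{2\zeta+1}(p)\log(n)$ factor appearing in $\tau_n$. This is more delicate than the situation of Proposition~\ref{control norm ratio heavy tail no small dev}, where the right-hand side of the empirical-process inequality was the absolute constant $1$: here the right-hand side is the potentially small quantity $\eta^\theta$, so the concentration window for the union bound is narrower and the discretization has to be carried out via Maurey, rather than by a plain Rademacher contraction argument, in order to obtain the stated dependence on $\theta$ and the polylogarithmic factors.
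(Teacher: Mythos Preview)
Your overall strategy --- Maurey discretization of the relevant $\ell_1$-ball, small-deviation lower bound at the net points, and a union bound over the net --- is exactly the paper's. The paper parametrizes slightly differently: rather than passing to the sphere, it keeps $\beta$ un-normalized and phrases the contradiction as ``no $\beta\in r_nB_1^p$ with $\|\beta\|_2\le c\,\tau_n$ can satisfy $\min_i|\langle X_i,\beta\rangle|\ge 1$.'' Because the Maurey error then only has to beat the fixed threshold $1/2$, the discretization parameter $m\asymp r_n^2\log^{2\zeta}(p)\log(n)$ is decoupled from the small-ball radius, which shortens the bookkeeping. Your spherical version is equivalent after the substitution $\eta\asymp L/r_n$ (note that $L/\eta$ is then constant, so your $m$ agrees with the paper's), but it forces you to handle the varying $L$ a little more carefully.

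There is, however, a genuine gap in your concentration step. You write that Chernoff/Hoeffding gives failure probability $\exp(-cn\eta^{2\theta})$ for the count $\#\{i:|\langle X_i,\bar\beta'\rangle|\le\eta\}\gtrsim n\eta^\theta$, and then impose $n\eta^{2\theta}\gtrsim m\log p$ in (ii). If you solve (i)--(iii) with this exponent you do \emph{not} recover $r_n/\tau_n$: the threshold you obtain is $r_n^{1+1/\theta}\bigl(\log^{2\zeta+1}(p)\log(n)/n\bigr)^{1/(2\theta)}$, which under the standing assumption $\tau_n\gtrsim 1$ is strictly larger than $r_n/\tau_n$ and therefore gives a weaker conclusion than the proposition claims. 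The fix is that you only need \emph{one} index $i$ with $|\langle X_i,\bar\beta'\rangle|<\eta$ to contradict the interpolation constraint, and for that the exact product bound
\[
\mathbb{P}\bigl(|\langle X_i,\bar\beta'\rangle|>\eta\ \text{for all }i\bigr)\;\le\;(1-c\eta^\theta)^n\;\le\;\exp(-cn\eta^\theta)
\]
(which is precisely what the paper uses) gives exponent $\theta$, not $2\theta$. Equivalently, if you insist on the count, use the multiplicative Chernoff bound for $\mathrm{Bin}(n,p)$ with $p\gtrsim\eta^\theta$, which also yields $\exp(-cnp)$. With $n\eta^\theta\gtrsim m\log p$ in place of your (ii), your calibration does produce exactly the stated $\tau_n$.
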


{\bf Proof of Proposition~\ref{minimum.theorem}.}
Let $\{ {\rm e}_j \}_{j=1}^p$ be the set of standard unit vectors in $\mathbb{R}^p$ and
$  {\cal D} := \{ \pm e_j \} \cup \{ 0 \}  \subset \mathbb{R}^p$ be the set of vectors with all entries equal to zero
possibly except just one, where the value is then $\pm 1$.
We define, for $m \in \mathbb{N}$, Maurey's set
$$ {\cal Z}_m := \biggl \{ z = {1 \over m} \sum_{k=1}^m z_k  , \ z_k \in  {\cal D} \ \forall \ k \biggr \}. $$ 
Take
 $$ m =c\log^{2\zeta}(p) \log(n) r_n^2  , $$
 and define the event
 $${\cal E}_{\rm max}:= \{ \|\mathbb X\|_\infty \leq c  \log^{\zeta}(p) \}, $$
 and observe that by Lemma~\ref{control infty norm log moments}, the event ${\cal E}_{\rm max}$ occurs with probability at least $1-p^{-1}$ as $p \gtrsim n$.  
 Then, by Lemma \ref{Maurey.lemma}, 
for all $\beta \in r_n B_1^p$ such that $\| \beta \|_2 \lesssim \tau_n$ there exists a vector $z_{\beta } \in r_n {\cal Z}_m $ such that on ${\cal E}_{\rm max}$
$$\max_{1 \le i \le n } | \langle X_i , \beta \rangle - \langle X_i , z_{\beta}  \rangle
  | \lesssim  \log^{\zeta}(p) r_n
  \sqrt { \log (2n) \over m }   \le {1 \over 2} $$
  as well as $\| \beta - z_\beta \|_2 \le 1/2 $, for $m$ defined previously.
  Thus, we also have by assumption on $\tau_n$
   $$ \| z_{\beta } \|_2 \lesssim \tau_n + 1/2 \lesssim \tau_n . $$
   In other words, on ${\cal E}_{\rm max}$ we have that $ \{ z_{\beta } :  \| \beta \|_1 \leq r_n, \|\beta \|_2 \leq c \tau_n \} \subset
 r_n {\cal Z}_m  \cap \{ \beta : \| \beta \|_2 \leq c \tau_n \}  =: {\cal Z}_m (r_n , \tau_n) $. We invoke that
 \begin{eqnarray*}
  & &\biggl \{ \sup_{\beta \in r_n B_1^p  \cap \{ \beta : \| \beta\|_2 \lesssim \tau_n \} } \min_{1 \le i \le n } | \langle X_i , \beta \rangle | \ge 1  \biggr \} \cap {\cal E}_{\rm max}  \\
 & \subseteq  & \biggl \{ \max_{z \in {\cal Z}_m 
 (r_n, \tau_n)}  \min_{1 \le i \le n } | \langle X_i , z \rangle | \ge {1 \over 2} \biggr \}  . 
  \end{eqnarray*}
   For all $z \in {\cal Z}_m (r_n ,  \tau_n) $ and $i \in [n]$ by the small deviation assumption~\ref{def small deviation}, we have
  $$ \mathbb{P}\biggl  ( |  \langle X_i , z \rangle | \le { 1 \over 2} \biggr ) \ge \mathbb{P} \biggl ( {| \langle X_i , z \rangle|\over  \| z \|_2  } \leq {c \over   \tau_n} 
  \biggr ) \gtrsim \tau_n^{-\theta} . $$
  Hence,
  $$ \mathbb{P}\biggl  ( | \langle X_i , z \rangle | \ge { 1 \over 2} \biggr ) \leq \biggl (1-c\tau_n^{-\theta}  \biggr  ) \leq
  \exp\biggl [- c\tau_n^{-\theta} \biggr ]  $$
  and thus we obtain
  $$ \mathbb{P}\biggl  ( \min_{1 \le i \le n } | \langle X_i , z \rangle |  \ge { 1 \over 2} 
 \biggr ) \leq    \exp\biggl [-cn \tau_n^{-\theta} \biggr ] .$$
  
 Since
  $$| {\cal Z}_m (r_n ,\tau_n)  | \le  | {\cal Z}_m | \le ( 2p+1)^m . $$
  we obtain by a union bound that 
  $$ \mathbb{P}\biggl  ( \max_{ z \in {\cal Z}_m ( r_n ,  \tau_n)} \min_{1 \le i \le n } | \langle X_i , z \rangle| \ge { 1 \over 2} \biggr ) \leq   \exp\biggl [ m \log (2p+1 ) - cn \tau_n^{-\theta} \biggr ].$$

 We conclude that 
\begin{align*}\mathbb{P} \biggl ( \sup_{\beta \in r_n B_1^p \cap \{ \beta: \|\beta\|_2 \leq c \tau_n \} }
 \min_{1 \le i \le n } |  \langle X_i , \beta \rangle| \ge 1 \biggr ) & \leq
 \exp\biggl [ m \log (2p+1 ) -  cn \tau_n^{-\theta} \biggr ]  + \mathbb{P} ( {\cal E}_{\rm max}^c ) \\
 & \leq \exp \left(- c n  \tau_n^{-\theta} \right) + p^{-1},
 \end{align*}
from our choice of $m$ and applying Lemma~\ref{control infty norm log moments}.

\subsection{Tesselation}

\begin{proposition} \label{proposition tesselation maurey}
Assume $p \gtrsim n$ and that $\mathbb{X}=(X_i)_{i \in [n]}$ has i.i.d. zero mean and unit variance entries and satisfies the weak moment assumption with $\zeta \geq 1/2$ and the anti-concentration assumption  with 
$\alpha \in (0,1]$. 
For $a >0$ define 
\begin{equation} \label{def eta}
    \eta =c  \left( \frac{a^2 \log^{2\zeta+1}(p)\log(n)}{n}   \right)^{\frac{1}{2+\alpha}},
\end{equation}
and assume  $\eta \lesssim 1$. Define
$$
 \mathcal B(a,\eta) = \left \{ \beta \in \mathbb R^p : d(\beta,\beta^*) \geq c \eta^{\alpha} \right\} .
 $$
Then with probability at least
$$
1 - 2\exp \left( -c n \eta^\alpha  \right) - np^{-2}
$$
we have, uniformly for $\beta \in aB_1^p \cap \mathcal S^{p-1} \cap \mathcal B(a,\eta)$
$$
 \frac{1}{n} \sum_{i=1}^n \mathbf{1} \{  \sgn{\langle X_i,\beta \rangle } \neq \sgn{\langle X_i,\beta^* \rangle } \} \gtrsim \eta^{\alpha} .
$$
\end{proposition}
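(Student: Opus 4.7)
Since $\mathbb{E}\bigl[\frac{1}{n}\sum_{i=1}^n \mathbf{1}\{\sgn{\langle X_i, \beta\rangle} \neq \sgn{\langle X_i, \beta^*\rangle}\}\bigr] = d(\beta, \beta^*) \geq c\eta^\alpha$ on $\mathcal{B}(a,\eta)$, the statement amounts to a uniform lower-tail concentration of an empirical process. My plan is to mimic the Maurey-empirical-method strategy already used in the proof of Proposition~\ref{minimum.theorem}, adapted to the sign-disagreement indicator, rather than the covering-net argument of Dirksen--Mendelson; this is precisely the improvement over~\cite{DirksenMendelson21} that the authors advertise just after Theorem~\ref{theorem error bound dependance margin}.

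The key steps are the following. First, I would condition on the event $\mathcal{E}_{\max} = \{\|\mathbb{X}\|_\infty \lesssim \log^\zeta(p)\}$, which holds with probability at least $1 - np^{-2}$ by Lemma~\ref{control infty norm log moments}. Second, with the Maurey parameter $m \asymp a^2 \log^{2\zeta}(p)\log(n)/\eta^2$, I would invoke Maurey's method as in Lemma~\ref{Maurey.lemma} to produce, for each $\beta \in aB_1^p$, a proxy $z_\beta$ in the Maurey set $a\mathcal{Z}_m$ of size at most $(2p+1)^m$, satisfying both $\max_{i\in[n]}|\langle X_i, \beta - z_\beta\rangle| \leq \eta/4$ on $\mathcal{E}_{\max}$ and $\|\beta - z_\beta\|_2$ small enough (in particular $\leq 1/2$, so that $\|z_\beta\|_2 \asymp 1$ for $\beta \in \mathcal{S}^{p-1}$). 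Third, the deterministic observation that $|\langle X_i, z_\beta\rangle| > \eta$ together with $|\langle X_i, \beta-z_\beta\rangle|\le \eta/4$ forces $\sgn{\langle X_i,\beta\rangle}=\sgn{\langle X_i,z_\beta\rangle}$ gives the pointwise lower bound $\mathbf{1}\{\sgn{\langle X_i,\beta\rangle} \neq \sgn{\langle X_i,\beta^*\rangle}\} \geq \mathbf{1}\{\sgn{\langle X_i,z_\beta\rangle} \neq \sgn{\langle X_i,\beta^*\rangle}\} - \mathbf{1}\{|\langle X_i,z_\beta\rangle|\leq \eta\}$.

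Averaging reduces the problem to uniform control, over $z \in a\mathcal{Z}_m$, of two empirical averages. For the small-projection term $\tfrac{1}{n}\sum_i \mathbf{1}\{|\langle X_i, z\rangle|\le \eta\}$, the anti-concentration assumption (Definition~\ref{def anticoncentration}) together with $\|z\|_2 \asymp 1$ and $\eta \geq p^{-1}$ gives a mean of order $\eta^\alpha$; Bernstein's inequality plus a union bound over $(2p+1)^m$ points then yields a bound $\lesssim \eta^\alpha$ with probability $1-\exp(m\log(2p+1)-cn\eta^\alpha)$, and the choice $\eta^{\alpha+2} \asymp a^2 \log^{2\zeta+1}(p)\log(n)/n$ is exactly what makes $m\log(p) \le c^{-1} n\eta^\alpha$, leaving the clean $\exp(-c'n\eta^\alpha)$ tail. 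The empirical disagreement at $z$ is controlled analogously: Bernstein concentrates it around $d(z,\beta^*)$ up to $\lesssim \eta^\alpha$, uniformly over $a\mathcal{Z}_m$. To finally close the loop I would propagate $d(\beta,\beta^*)\geq c\eta^\alpha$ to $z_\beta$ using the population-level anti-concentration/Chebyshev split $\mathbb{P}(\sgn{\langle X, \beta\rangle} \neq \sgn{\langle X, z_\beta\rangle}) \leq \mathbb{P}(|\langle X,\beta\rangle|\le\tau) + \mathbb{P}(|\langle X,\beta-z_\beta\rangle|>\tau) \lesssim \tau^\alpha + \|\beta-z_\beta\|_2^2/\tau^2$, optimised in $\tau$ to obtain $\lesssim \|\beta-z_\beta\|_2^{2\alpha/(\alpha+2)}$, and taking $m$ just large enough that this is $\leq c\eta^\alpha/2$.

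The main obstacle is precisely this joint parameter balancing: $m$ must simultaneously be large enough to achieve the uniform-over-data approximation at level $\eta/4$, to make the $\ell_2$-error small enough for the population comparison $|d(z_\beta,\beta^*)-d(\beta,\beta^*)|\le c\eta^\alpha/2$, and small enough that $m\log(p) \ll n\eta^\alpha$ so the union bound over Maurey's set survives. It is precisely the exponent $1/(2+\alpha)$ in the definition of $\eta$ that makes the three constraints compatible in this framework; the anti-concentration exponent $\alpha$ then dictates the final rate $\eta^\alpha$. Any attempt to discretise by a standard $\varepsilon$-net rather than by Maurey loses a factor of $\|\mathbb{X}\|_\infty$ that one cannot afford under the weak-moment assumption, which is why the Maurey construction is essential here.
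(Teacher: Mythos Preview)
Your outline matches the paper's proof almost step for step: the same Maurey discretisation with $m\asymp a^2\log^{2\zeta}(p)\log(n)/\eta^2$, the same sign-stability reduction on the index set where $|\langle X_i,z_\beta\rangle|>\eta$, and the same Bernstein-plus-union-bound control of the two empirical averages over $a\mathcal{Z}_m$. The only substantive divergence is the last step, where you transfer $d(\beta,\beta^*)\ge c\eta^\alpha$ to $d(z_\beta,\beta^*)$, and there your Chebyshev bound is too weak to close the argument.

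Concretely, Chebyshev gives $\mathbb{P}(|\langle X,\beta-z_\beta\rangle|>\tau)\le\|\beta-z_\beta\|_2^2/\tau^2$, and with the stated $m$ one only has $\|\beta-z_\beta\|_2\asymp\eta/(\log^{\zeta}(p)\sqrt{\log n})$. Your optimised bound $\|\beta-z_\beta\|_2^{2\alpha/(2+\alpha)}$ is then $\lesssim\eta^\alpha$ only when $\eta^{-\alpha}\lesssim\log^{2\zeta}(p)\log(n)$, a polylog-versus-polynomial condition that fails for large $n$. Nor can you simply enlarge $m$ as you suggest: forcing $\|\beta-z_\beta\|_2^{2\alpha/(2+\alpha)}\lesssim\eta^\alpha$ requires $m\gtrsim a^2\eta^{-(2+\alpha)}\asymp n/(\log^{2\zeta+1}(p)\log(n))$, which makes $m\log(p)\gg n\eta^\alpha$ and destroys the union bound over $(2p+1)^m$ Maurey points; the three-way balancing you describe is therefore impossible with second moments. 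The paper instead fixes $\tau=\eta$ and replaces Chebyshev by the higher-moment tail implicit in Lemma~\ref{control infty norm log moments}: via Rio's Marcinkiewicz--Zygmund inequality and the weak-moment assumption at order $q\asymp\log(n)\le\log(p)$ one obtains $\mathbb{P}(|\langle X,\beta-z_\beta\rangle|>\eta\mid (y_i,X_i)_{i\in[n]})\lesssim n^{-2}\lesssim\eta^\alpha$ directly, with no additional constraint on $m$. Substituting this for your Chebyshev step closes the gap and recovers exactly the paper's proof.
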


\begin{proof}
For $a>0$ and $\eta$ defined in Equation~\eqref{def eta} let $\beta \in aB_1^p \cap \mathcal{S}^{p-1} \cap \mathcal B(a,\eta)$.
By Lemma~\ref{Maurey.lemma} 
there exists $z_{\beta}$ in $\mathcal Z_m$ such that
$$
\max_{i \in [n]} |\langle X_i, \beta - z_{\beta} \rangle |{\rm l }_{{\cal E}_{\rm max}}  \lesssim a \log^{\zeta}(p)\sqrt \frac{\log(n)}{m}  \asymp \eta~~\text{and}~~ \| \beta - z_{\beta}\|_2 \lesssim \frac{1}{\sqrt{m}}
$$
where ${\cal E}_{\rm max}:=\{ \|\mathbb{X}\|_\infty \leq c\log^{\zeta}(p)\}$ and
for $m =ca^2\log^{2\zeta}(p)\log(n)/\eta^2$. We note that by Lemma \ref{control infty norm log moments} ${\cal E}_{\rm max}$ occurs with probability at least $1-np^{-2}$. In particular we have $1/2\leq 1-\eta \leq\| z_{\beta} \|_2 \leq 1+ \eta \leq 3/2$, for $\eta$ small enough. 
Let $z_{\beta} \in \mathcal Z_m$. By Bernstein's inequality, Theorem 3.1.7 in \cite{GineNickl16}, and the anti-concentration assumption, we have that 
\begin{align*}
   \sum_{i=1}^n \mathbf{1} \{ | \langle X_i, z_{\beta} \rangle | \leq  \eta \} &  \leq n \left( \mathbb P \left(| \langle X_1, z_{\beta} \rangle | \leq  \eta \right)  +  \eta^{\alpha} \right) \\
   & \leq n \left( \mathbb P \left(| \langle X_1, \frac{z_{\beta}}{\| z_{\beta}\|_2} \rangle | \leq  2\eta \right) + \eta^{\alpha} \right) \\
   & \leq n \left(   \sup_{\beta \in \mathcal S^{p-1}} \mathbb P \left(  \langle X_1, \beta \rangle | \leq 2\eta \right)  +   \eta^{\alpha} \right) \\
   & \lesssim  n \eta^{\alpha}
\end{align*}
with probability at least  $1- \exp(- cn\eta^{\alpha})$.
Now, define
$$
J := \left \{ i \in [n] : \min_{z_\beta \in \mathcal Z_m} |\langle X_i, z_{\beta} \rangle| \geq \eta \right \}. 
$$
Using an bound over $\mathcal Z_m$ and that $|\mathcal Z_m|\leq (2p+1)^m$, we obtain that with probability at least 
$$
1 - 2\exp \left[  m\log(2p+1) - cn\eta^\alpha   \right] \geq 1- 2\exp[- cn \eta^\alpha]
$$
we have uniformly for $z_\beta \in \mathcal{Z}_m$
\begin{equation*}
    |J^C| \lesssim   \eta^{\alpha}n .
\end{equation*}
For $i \in J$ and working on the event ${\cal E}_{\rm max}$ we have that $|\langle X_i, z_\beta \rangle| \geq \eta$ and $| \langle X_i,\beta-z_\beta \rangle| < \eta$ and hence $\langle X_i,\beta \rangle$ and $\langle X_i,z_\beta \rangle$ have matching signs. 

Hence, for $\beta \in aB_1^p \cap \mathcal S^{p-1} \cap \mathcal B(a,\eta)$ and working on the event ${\cal E}_{\rm max}$, we have 
\begin{align*}
    \sum_{i=1}^n \mathbf{1} \{  \sgn{\langle X_i, \beta \rangle} \neq \sgn{\langle X_i,\beta^* \rangle} \} & \geq \sum_{i \in J} \mathbf{1} \{  \sgn{\langle X_i,\beta \rangle} \neq \sgn{\langle X_i,\beta^* \rangle} \} \\ 
    & = \sum_{i \in J} \mathbf{1} \{  \sgn{\langle  X_i, z_{\beta} \rangle} \neq \sgn{\langle X_i,\beta^* \rangle} \} \\
    & \geq \sum_{i =1}^n \left (  \mathbf{1} \{  \sgn{\langle X_i, z_{\beta} \rangle} \neq \sgn{\langle X_i,\beta^* \rangle} -  c \eta^{\alpha} \right ). 
\end{align*}
Applying Bernstein's inequality, Theorem 3.1.7 in \cite{GineNickl16} we have that
\begin{align*}
      \sum_{i =1}^n \mathbf{1} \left ( \sgn{\langle  X_i, z_{\beta} \rangle} \neq \sgn{\langle  X_i,\beta^* \rangle} \right ) \geq &  \bigg( d(z_\beta,\beta^*) -  c\eta^{\alpha}   \bigg) n
\end{align*}
with probability at least  $1- \exp \left( - c n \eta^{\alpha}\right)$. We next lower bound $d(z_\beta,\beta^*)$. Indeed, arguing as above, we have that
\begin{align*}
&   d(z_\beta,\beta^*) =  \mathbb{P} \left (  \sgn{\langle X,z_\beta \rangle} \neq \sgn{\langle X, \beta^* \rangle}| (y_i,X_i)_{i=1}^n \right) \\
& \geq  \mathbb{P} \left (  \sgn{\langle X,z_\beta \rangle} \neq \sgn{\langle X, \beta^* \rangle}, ~|\langle X, z_\beta \rangle| \geq \eta, ~|\langle X, z_\beta-\beta \rangle | < \eta | (y_i,X_i)_{i=1}^n \right ) \\
 & = \mathbb{P} \left (  \sgn{\langle X,\beta\rangle} \neq \sgn{\langle X, \beta^* \rangle}, ~|\langle X, z_\beta \rangle| \geq \eta, ~|\langle X, z_\beta-\beta \rangle | < \eta | (y_i,X_i)_{i=1}^n \right ) \\
 & \geq d(\beta,\beta^*)-\mathbb{P} \left ( |\langle X, z_\beta \rangle| \leq \eta | (y_i,X_i)_{i=1}^n \right ) - \mathbb{P} \left ( |\langle X, z_\beta -\beta\rangle| \geq  \eta  | (y_i,X_i)_{i=1}^n \right ) .
\end{align*}
Since  $d(\beta,\beta^*) \gtrsim \eta^{\alpha}$, 
$\mathbb{P} \left ( |\langle X, z_\beta \rangle| \leq \eta | (y_i,X_i)_{i=1}^n \right ) \lesssim \eta^{\alpha}$ by the anti-concentration assumption (Definition~\ref{def anticoncentration}) and $\mathbb{P} \left ( |\langle X, z_\beta -\beta\rangle| > \eta | (y_i,X_i)_{i=1}^n \right ) \lesssim n^{-2} \lesssim \eta^{\alpha}$ by our choice of $m$ and Lemma \ref{control infty norm log moments}, we obtain when the constant in the definition of $\mathcal{B}(a,\eta)$ is large enough that 
$$
 d(z_\beta,\beta^*) \gtrsim \eta^{\alpha} . 
$$

Hence, taking another union bound over $\mathcal Z_m$ and ${\cal E}_{\rm max}$ and for the constant in the definition of $\mathcal{B}(a,\eta)$ large enough, we obtain with probability at least 
$$
1 - 2\exp \left[  m\log(2p+1) - c\eta^\alpha n \right] -np^{-2} \geq 1-2\exp[- c \eta^\alpha n ]-np^{-2}
$$
that  uniformly for $\beta \in aB_1^p \cap \mathcal S^{p-1} \cap \mathcal B(a,\eta)$ 
$$
  \sum_{i=1}^n \mathbf{1}\left (  \sgn{\langle X_i,\beta \rangle} \neq \sgn{\langle X_i, \beta^* \rangle} \right )\gtrsim \eta^\alpha n 
$$
which concludes the proof. 

\end{proof}

\subsection{Rest of the proofs}
\subsubsection{Lemma \ref{Maurey.lemma}}
The following Lemma applies Maurey's empirical method \cite{Carl85, GuedonLecuePajor13} to construct a set $\mathcal{Z}_m$ that approximates the $B_1^p$-ball well. 
 \begin{lemma}\label{Maurey.lemma} (Maurey's Lemma) Let $\{ {\rm e}_j \}_{j=1}^p$ be the set of standard unit vectors in $\mathbb{R}^p$ and
$  {\cal D} := \{ \pm e_j \} \cup \{ 0 \}  \subset \mathbb{R}^p$ be the set of vectors with all entries equal to zero
except at most one, where the value is then $\pm 1$.
Define, for $m \in \mathbb{N}$, Maurey's set
$$ {\cal Z}_m := \biggl \{ z = {1 \over m} \sum_{k=1}^m z_k  , \ z_k \in  {\cal D} \ \forall \ k \biggr \}. $$ 
  Then, we have that ${\cal Z}_m \subset B_1^p $ and that $|{\cal Z}_m |\le (2p+1)^m$. Moreover, 
  for every $\beta \in B_1^p$ there exists a vector $z_{\beta} \in
  {\cal Z}_m $ such that for ${\cal E}_{\rm max}:=\{ \|\mathbb{X}\|_\infty \leq c \log^{\zeta}(p) \} $ we have that 
 $$ \max_{1 \le i \le n } | \langle X_i , \beta \rangle - \langle X_i , z_{\beta} \rangle|{\rm l }_{{\cal E}_{\rm max}} 
 \lesssim
\log^{\zeta}(p) \sqrt { \log (n) \over m } ~~~~\text{and}~~~~\| \beta - z_{\beta } \|_2  \lesssim \frac{1}{\sqrt{m}}.$$
  \end{lemma}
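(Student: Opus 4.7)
The first two assertions are elementary. Since each $z_k \in \mathcal{D}$ satisfies $\|z_k\|_1 \leq 1$, the triangle inequality gives $\|z\|_1 \leq 1$ for every $z \in \mathcal{Z}_m$, so $\mathcal{Z}_m \subset B_1^p$. For the cardinality bound, each of the $m$ summands takes values in a set of size $|\mathcal{D}| = 2p+1$, yielding $|\mathcal{Z}_m| \leq (2p+1)^m$.

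For the approximation claim I plan to use Maurey's empirical method combined with the probabilistic method. Given $\beta \in B_1^p$, introduce a random vector $Z$ valued in $\mathcal{D}$ with distribution
\begin{equation*}
\mathbb{P}\bigl(Z = \sgn{\beta_j} e_j\bigr) = |\beta_j| \text{ for } j \in [p], \qquad \mathbb{P}(Z = 0) = 1 - \|\beta\|_1 \geq 0.
\end{equation*}
A direct calculation gives $\mathbb{E}[Z] = \beta$ and $\|Z\|_2 \leq 1$ almost surely. Let $Z_1, \dots, Z_m$ be i.i.d.\ copies of $Z$ drawn independently of $\mathbb{X}$ and set $\bar Z = \frac{1}{m}\sum_{k=1}^m Z_k \in \mathcal{Z}_m$, so $\mathbb{E}[\bar Z] = \beta$. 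The candidate $z_\beta$ will be a deterministic realization of $\bar Z$ selected through the probabilistic method.

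The $\ell_2$ bound is immediate from independence: $\mathbb{E}\|\bar Z - \beta\|_2^2 = m^{-1}\mathbb{E}\|Z - \beta\|_2^2 \leq m^{-1}\mathbb{E}\|Z\|_2^2 \leq 1/m$, so by Markov's inequality $\|\bar Z - \beta\|_2 \lesssim 1/\sqrt m$ holds with probability at least $3/4$ in the randomness of $(Z_k)$. For the $\ell_\infty$-type bound I condition on $\mathbb{X}$ and restrict attention to $\mathcal{E}_{\max}$. For each fixed $i \in [n]$, the variables $\langle X_i, Z_k - \beta\rangle$ are i.i.d.\ in $k$, centered, and bounded in absolute value by $2\|X_i\|_\infty \lesssim \log^{\zeta}(p)$. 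Hoeffding's inequality therefore yields
\begin{equation*}
\mathbb{P}\!\left(|\langle X_i, \bar Z - \beta\rangle| > t \,\bigl|\, \mathbb{X}\right)\mathbf{1}_{\mathcal{E}_{\max}} \leq 2\exp\!\left(-\frac{c\, m\, t^2}{\log^{2\zeta}(p)}\right),
\end{equation*}
and choosing $t \asymp \log^{\zeta}(p)\sqrt{\log(n)/m}$ with a sufficiently large implicit constant, followed by a union bound over $i \in [n]$, bounds the maximum by $t$ with conditional probability at least $3/4$.

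Combining the two estimates by a union bound, both events hold simultaneously with positive conditional probability on $\mathcal{E}_{\max}$. The probabilistic method then produces a deterministic realization $z_\beta \in \mathcal{Z}_m$ (depending on $\mathbb{X}$ and $\beta$) satisfying both bounds on $\mathcal{E}_{\max}$; on $\mathcal{E}_{\max}^c$ the indicator renders the claim vacuous. The only mild subtlety is ensuring that the \emph{same} realization meets both requirements, which is handled by the joint union bound; the rest reduces to a textbook Hoeffding/Markov combination.
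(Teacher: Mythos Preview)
Your argument is correct and follows the same Maurey empirical-method skeleton as the paper: construct the random $\mathcal D$-valued $Z$ with mean $\beta$, average $m$ i.i.d.\ copies, and extract a deterministic realisation via the probabilistic method. The paper differs only in the technical tool used to control $\max_i |\langle X_i,\bar Z-\beta\rangle|$: it applies symmetrisation to introduce a Rademacher sequence and then bounds the conditional expectation of the Rademacher maximum by $\sqrt{2\log(2n)}\sqrt m\|\mathbb X\|_\infty$, finally combining the two estimates into the \emph{single} expectation
\[
\mathbb E\Bigl[\max_i|\langle X_i,\beta-\bar Z\rangle|\mathbf 1_{\mathcal E_{\max}}+\log^{\zeta}(p)\log^{1/2}(n)\|\beta-\bar Z\|_2\Bigr]\lesssim\log^{\zeta}(p)\sqrt{\log(n)/m},
\]
so that one realisation automatically satisfies both bounds. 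Your route via Hoeffding plus a union bound over $i\in[n]$ is more elementary and equally valid; the paper's expectation-combination trick is slightly cleaner in that it avoids the separate Markov/Hoeffding steps and the joint union bound.

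One small point to tighten: on $\mathcal E_{\max}^c$ the first inequality is indeed vacuous, but the second bound $\|\beta-z_\beta\|_2\lesssim 1/\sqrt m$ carries no indicator and must still hold. Since the distribution of $\bar Z$ is independent of $\mathbb X$, your Markov step already gives a realisation with this property regardless of $\mathcal E_{\max}$, so on $\mathcal E_{\max}^c$ simply pick any such realisation; this deserves a one-line mention.
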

  \begin{proof}

  For $z \in {\cal D}$ either $\| z \|_1 =1 $ or $z \equiv 0 $.
  Thus for $\bar z := \sum_{k=1}^m z_k / m \in {\cal Z}_m$ we have
  $\| \bar z \|_1 \le \sum_{k=1}^m \| z_k \|_1 / m \le 1 $. 
  It is moreover clear that $|{\cal D}| = (2p+1)$. Therefore $|{\cal Z}_m |\le (2p+1)^m$. 
  
  We now turn to the main part of the lemma.
 Let $\beta \in B_1^p$. Define a random vector $Z \in {\cal D}$ by
 $$ \mathbb{P} \biggl ( Z =  {\rm sign} (\beta_j)  {\rm e}_j  \biggr ) = | \beta_j | , \ {\rm for} \ \beta_j \not= 0, \  j=1 , \ldots , p ,$$
 and 
 $$ \mathbb{P}\biggl  ( Z = 0  \biggr )= 1- \| \beta \|_1 .$$
 Then
 $$ \mathbb{E} Z = \beta , \ \mathbb{E} \| \beta -Z \|_2^2 = \| \beta \|_1 - \|\beta \|_2^2 \le \| \beta \|_1 \le 1 . $$
 Let $Z_1 , \ldots ,Z_m$ be independent copies of $Z$ and define $\bar Z :=\sum_{k=1}^m Z_k /m$.
 Then we get
 $$ \mathbb{E} \| \beta - \bar Z \|_2^2 \le {1 \over m} .$$
 Let $\sigma_1 , \ldots \sigma_m $ be a Rademacher sequence  independent of ($\mathbb{X}, 
 (Z_1 , \ldots , Z_m))$.
 Then we have by the symmetrization inequality, Theorem 3.1.21  in \cite{GineNickl16}, that 
 $$ \mathbb{E} \biggl [ \max_{1 \le i \le n } | \langle X_i , \beta \rangle  - \langle X_i ,\bar  Z \rangle |\  \biggr \vert \mathbb{X} \biggr ] \le 
{2 \over m}  \mathbb{E} \biggl [ \max_{1 \le i \le n } | \sum_{k=1}^m \sigma_k \langle X_i ,\bar  Z_k \rangle | \ 
 \biggr  \vert \ \mathbb{X}  \biggr ] .$$
Further, for $i=1 , \ldots , n $, we have that
$$  \sum_{k=1}^m \langle X_i , Z_k \rangle^2 \le m  \| X_i \|_{\infty}^2 \le m  \| \mathbb{X} \|_{\infty}^2 .$$
Thus we obtain, 
$$\mathbb{E} \biggl [ \max_{1 \le i \le n } |\sum_{k=1}^m \sigma_k \langle X_i , Z_k \rangle |\  \biggr \vert \ \mathbb{X}, \ Z_1 , \ldots , Z_m   \biggr ] \le
\sqrt { 2 \log (2n) } \sqrt m  \| \mathbb{X} \|_{\infty} . $$
Hence, and since
$${\cal E}_{\rm max} = \biggl \{ \| \mathbb{X} \|_{\infty} \leq c \log^{\zeta}(p) \biggr \}, $$
we obtain that 
 $$ \mathbb{E} \biggl [ \max_{1 \le i \le n } | \langle X_i , \beta \rangle - \langle X_i , \bar Z \rangle
  | {\rm l }_{{\cal E}_{\rm max}}   \biggr ] \lesssim  
  \log^{\zeta}(p) \sqrt{ \frac{\log(n)}{m}}.$$
Invoking Jensen's inequality and $ {\mathbb{E} \| \beta - \bar Z \|_2^2 }\leq 1/m$ 
 we have that 
 \begin{align*}
     \mathbb{E} \|\beta-\bar Z\|_2 \lesssim 1/\sqrt{m}.
 \end{align*}
 Hence we obtain that
 \begin{align*}
       \mathbb{E} \left [ \max_{1 \le i \le n } | \langle X_i , \beta \rangle - \langle X_i , \bar Z \rangle
  | {\rm l }_{{\cal E}_{\rm max}} + \log^{\zeta}(p)\log^{1/2}(n) \|\beta-\bar Z\|_2 \right ] \lesssim \log^{\zeta}(p) \sqrt{\frac{\log(n)}{m}},
 \end{align*}
  and hence there exists at least one $z_\beta \in \mathcal{Z}_m$ with the desired properties. 
  \end{proof}
  
\subsubsection{Proof of Lemma~\ref{lemma upper bound iterations}}

\begin{proof}
The proof follows closely the arguments in \cite{Telgarsky13}. First, note that rescaling $\mathbb{X}=\mathbb{X}/\|\mathbb{X}\|_\infty$ does not change the approximating properties of $\tilde \beta_T$ for the max $\ell_1$-margin. Indeed, if $\tilde \beta_T$ fulfills
\begin{align*}
    \min_{1 \leq i \leq n} \frac{y_i \left \langle \frac{X_i}{\|\mathbb{X}\|_\infty}, \tilde \beta_T \right \rangle}{\|\tilde \beta_T\|_1} \geq \frac{1}{2} \max_{\beta \neq 0} \min_{1 \leq i \leq n} \frac{y_i \left \langle \frac{X_i}{\|\mathbb{X}\|_\infty}, \beta \right \rangle}{\|\beta\|_1}=:\gamma_R=\gamma/\|\mathbb{X}\|_\infty,
\end{align*}
then, by linearity, $\tilde \beta_T$ also fulfills \begin{align*}
    \min_{1 \leq i \leq n} \frac{y_i \langle X_i, \tilde \beta_T \rangle}{\|\tilde \beta_T\|_1} \geq \frac{1}{2} \max_{\beta \neq 0} \min_{1 \leq i \leq n} \frac{y_i \langle X_i, \beta\rangle}{\|\beta\|_1}=\gamma.
\end{align*}
Henceforth, we work with the rescaled data $\mathbb{X}/\|\mathbb{X}\|_\infty$, which, in slight abuse of notation, we also denote by $\mathbb{X}$. 
Note, that by definition $\|\mathbb{X}\|_\infty \leq 1$. 
Define the exponential loss,
$$\ell(\beta):=\frac{1}{n}\sum_{i=1}^n \exp(-y_i \langle X_i, \beta \rangle).$$ 
Note that
\begin{align*}& \nabla \ell( \beta) = -\frac{1}{n}\sum_{i=1}^n y_i X_i \exp(-y_i\langle X_i, \beta \rangle)  ~~\text{and}\\&\nabla^2 \ell(\beta)=\frac{1}{n}\sum_{i=1}^n X_i X_i^T \exp(-y_i \langle X_i, \beta \rangle ). 
\end{align*}
Hence, we have that 
\begin{align*}
{-\langle \nabla \ell(\tilde \beta_t ), v_t \rangle}
= \alpha_t \ell(\tilde \beta_t). 
\end{align*}
Moreover, note that $|\alpha_t|\leq \sum w_{t,i} |\langle X_i, v_t \rangle| \leq 1$. 
By second order Taylor expansion, we obtain that 
\begin{align*}
    \ell(\tilde \beta_{t+1}) & \leq \ell(\tilde \beta_t)+\epsilon \alpha_t \langle \nabla \ell(\tilde \beta_t), v_t \rangle + \frac{1}{2} 
    \sup_{r \in [0, 1]} \langle \nabla^2 \ell(\tilde \beta_t+r\epsilon \alpha_t v_t) v_t, v_t \rangle. 
    \end{align*}
    We next bound the Hessian above. Indeed, we have for any $r$ that
    \begin{align*}
        \langle \nabla^2 \ell(\tilde \beta_t + r \epsilon \alpha_t v_t) v_t, v_t \rangle & = \frac{1}{n} \sum_{i=1}^n \langle X_i, v_t \rangle^2 \epsilon^2 \alpha_t^2 \exp(-y_i \langle X_i, \tilde \beta_t + r \epsilon \alpha_t v_t \rangle )\\ &  \leq \epsilon^2 \alpha_t^2 \exp(r | \alpha_t|  \epsilon  ) \ell(\tilde \beta_t) \leq \epsilon^2 \alpha_t^2 e^{\epsilon} \ell(\tilde \beta_t). 
    \end{align*}
    Hence, we can further bound
    \begin{align*}
    \ell(\tilde \beta_{t+1})
   &  \overset{}{\leq} \ell(\tilde \beta_t)+\epsilon \alpha_t \langle \nabla \ell(\tilde \beta_t), v_t \rangle+\frac{\epsilon^2 \alpha_t^2 e^{\epsilon} }{2}\ell(\tilde \beta_t)  \\
   & \leq \ell(\tilde \beta_t) \left (1-\epsilon \alpha_t^2+\frac{3\epsilon^2 \alpha_t^2  }{2}  \right ) \leq \ell(\tilde \beta_t)  \exp \left (-\epsilon \left ( \alpha_t^2-\frac{3\epsilon \alpha_t^2 }{2}  \right ) \right ),
\end{align*}
and hence we obtain
\begin{align*}
      \ell(\tilde \beta_{T}) \leq \exp \left (-\epsilon \sum_{t=1}^T \left (  \alpha_t^2-\frac{3\epsilon \alpha_t^2  }{2} \right ) \right ).
\end{align*}
Moreover, we have that
\begin{align*}
    \|\tilde \beta_T\|_1 =\|\sum_{t=1}^T \epsilon \alpha_t v_t \|_1 \leq \epsilon \sum_{t=1}^T |\alpha_t|.
\end{align*}
In addition, we note that by the dual formulation of the margin (see Appendix~\ref{dual formulation}) and definition of $v_t $ and $\alpha_t$ we have that
$$|\alpha_t|=\|\sum w_{t,i} y_i X_i\|_\infty \geq \inf_{w:~w_i \geq 0~\forall i, \| w\|_1=1} \|\sum w_i y_i X_i\|_\infty=\gamma_R.$$
Hence, by Markov's inequality and since $3 \epsilon/2 < 1$, we obtain for any positive $x$
\begin{align*}
    \sum_{i=1}^n \mathbf{1}_{\{  y_i \langle X_i, \tilde \beta_T \rangle   \leq \|\tilde \beta_T \|_1 x \} } & \leq \sum_{i=1}^n \exp(\|\tilde \beta_T\|_1 x - y_i \langle X_i, \tilde \beta_T \rangle) \\
    & = n\ell(\tilde \beta_T) \exp(\|\tilde \beta_T\|_1 x )\\ & \leq \exp \left (\log(n)-\epsilon \sum_{t=1}^T |\alpha_t| \left (|\alpha_t|-x-\frac{3\epsilon |\alpha_t|  }{2} \right ) \right ) \\
    & \leq \exp \left (\log(n)-\epsilon \sum_{t=1}^T |\alpha_t| \left (\gamma_R-x-\frac{3\epsilon \gamma_R  }{2} \right ) \right ).
\end{align*}
Hence,  choosing $x=\frac{1}{2}\gamma_R$ and using that $\epsilon \leq 1/6$ and that  with probability at least $1-np^{-2}$ we have by Lemma \ref{control infty norm log moments} that $T  > \frac{2\log(n)}{3 \epsilon^2 \gamma_R^2} =\frac{2\log(n) \|X\|_\infty^2}{3 \epsilon^2 \gamma^2}$,
 we obtain
\begin{align*}
    \sum_{i=1}^n \mathbf{1}_{\{  y_i \langle X_i, \tilde \beta_T \rangle   \leq \|\tilde \beta_T \|_1 x \} } & \leq \sum_{i=1}^n \exp(\|\tilde \beta_T\|_1 x - y_i \langle X_i, \tilde \beta_T \rangle) \\ & \leq \exp \left (\log(n)-3T\epsilon^2 \gamma_R^2 /2 \right ) < e^{0}=1.
\end{align*}
Since $   \sum_{i=1}^n \mathbf{1}_{\{  y_i \langle X_i, \tilde \beta_T \rangle \leq \|\tilde \beta_T\|_1 \frac{1}{2}\gamma_R \}}$ can only take values in $\{0, 1, \dots, n\}$ this implies that $\sum_{i=1}^n \mathbf{1}_{\{  y_i \langle X_i, \tilde \beta_T \rangle \leq \|\tilde \beta_T\|_1 \frac{1}{2}\gamma_R \}}=0$ and hence the result follows.
\end{proof}

\subsubsection{Proof of Corollary \ref{cor rate Gauss studentt}} 
\begin{proof} 
For Gaussian distributed features it is clear that the weak moment assumption with $\zeta=1/2$ is satisfied. Moreover, since for $\beta \in \mathcal{S}^{p-1}$ we have that $\langle X, \beta \rangle \thicksim \mathcal{N}(0,1)$ we have for any $0<\varepsilon \leq 1$
\begin{align*}
   \sup_{\beta \in \mathcal{S}^{p-1}} \mathbb{P} \left ( | \langle X, \beta \rangle | \leq \varepsilon \right ) = \int_{-\varepsilon}^\varepsilon \frac{1}{\sqrt{2 \pi}} e^{-x^2/2} \text{d}x \asymp \varepsilon,
\end{align*}
and hence both the anti-concentration and small deviation assumptions are fulfilled with $\alpha=\theta=1$. Finally, to show \eqref{cor Euclidean rates Ada}, note that by Grothendieck's identity, Lemma 3.6.6. in \cite{vershynin2018high}, and as the geodesic distance on the sphere is lower bounded by the Euclidean distance, we have that
\begin{align*}
d(\beta^*, \tilde \beta_T) =  \frac{\arccos{\left ( \left \langle \beta^*, \frac{\tilde \beta_T}{\|\tilde \beta_T\|_2}\right \rangle \right )}}{\pi} \geq \left \| \beta^*-\frac{\tilde \beta_T}{\|\tilde \beta_T\|_2}\right \|_2. 
\end{align*}
For the student-t-distribution with at least $32 \log(p)$ degrees of freedom Lemma \ref{lemma studentt conditions} below proves that the weak moment assumption and the anti-concentration and small deviation assumptions with $\alpha=\theta=1$ are satisfied. Moreover, Lemma \ref{Lemma margin studentt} below, shows that in this case we can also lower bound $d(\tilde \beta_T,\beta^*) \gtrsim \left \| \beta^*-\frac{\tilde \beta_T}{\|\tilde \beta_T\|_2}\right \|_2.$
\end{proof}
\begin{lemma} \label{lemma studentt conditions}
Suppose that $X=(x_j)_{j=1}^p$ with $x_j \overset{i.i.d.}{\thicksim} \sqrt{(d-2)/d}t_d$ for $d \in \mathbb{N}$, $d \geq 32 \log(p)$ and $p \gtrsim 1$. Then for any $0 \leq \varepsilon\leq 1$ 
\begin{align*}
    \inf_{\beta \in \mathcal{S}^{p-1}} \mathbb{P} \left ( |\langle X,\beta \rangle|\leq \varepsilon \right )\gtrsim \varepsilon. 
\end{align*}
Moreover, under the same assumptions, we have that
\begin{align*}
    \sup_{\beta \in \mathcal{S}^{p-1}} \mathbb{P}  \left ( |\langle X,\beta \rangle|\leq \varepsilon \right ) \lesssim \varepsilon +p^{-1}. 
\end{align*}
Finally, for $2q+1\leq d $ and $p \gtrsim 1$ we have that 
\begin{align*}
  (  \mathbb{E}|x_1|^q)^{1/q} \lesssim \sqrt{q}. 
\end{align*}
\end{lemma}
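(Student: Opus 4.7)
The plan is to prove all three claims via the standard scale-mixture representation of the student-$t$. Write $x_j = \sqrt{(d-2)/d}\, Z_j/\sqrt{V_j/d}$ with $Z_1, \dots, Z_p \stackrel{iid}{\sim} \mathcal{N}(0,1)$ independent of $V_1, \dots, V_p \stackrel{iid}{\sim} \chi^2_d$. Conditionally on $V = (V_j)_{j=1}^p$, the linear combination $\langle X, \beta\rangle$ is a sum of independent Gaussians and hence $\langle X, \beta\rangle \mid V \sim \mathcal{N}(0, \sigma^2(V))$ with $\sigma^2(V) := (d-2)\sum_{j=1}^p \beta_j^2/V_j$. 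This reduces the two anti-concentration statements to controlling the random conditional variance $\sigma^2(V)$.

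Next, I would apply the Laurent--Massart chi-squared tail inequality to get $\mathbb{P}(V_j \notin [d/2, 2d]) \leq 2 e^{-cd}$ for some absolute $c > 0$. Since $d \geq 32 \log p$, a union bound yields an event $E$ of probability at least $1 - c'p^{-1}$ on which $V_j \in [d/2, 2d]$ for all $j \in [p]$ simultaneously. Because $\|\beta\|_2 = 1$, this pinches $\sigma^2(V)$ into a fixed interval $[c_1, c_2]$ bounded away from $0$ and $\infty$, \emph{uniformly in $\beta$}. Standard Gaussian anti-concentration, namely $c_3\, \varepsilon/\sigma \leq 2\Phi(\varepsilon/\sigma) - 1 \leq C\, \varepsilon/\sigma$ whenever $\varepsilon/\sigma$ is bounded, then gives $c_4 \varepsilon \leq \mathbb{P}(|\langle X, \beta\rangle| \leq \varepsilon \mid V) \leq c_5 \varepsilon$ on $E$ for all $\varepsilon \in [0, 1]$. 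The lower bound of the lemma follows by taking expectations and using $\mathbb{P}(E) \geq 1/2$, while the upper bound follows by splitting on $E$ and $E^c$, the latter contributing the $p^{-1}$ term.

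For the moment estimate, independence of $Z_1$ and $V_1$ yields $\mathbb{E}|x_1|^q = ((d-2)/d)^{q/2}\, \mathbb{E}|Z_1|^q \cdot d^{q/2}\, \mathbb{E} V_1^{-q/2}$. Gaussian absolute moments give $(\mathbb{E}|Z_1|^q)^{1/q} \lesssim \sqrt{q}$, and the inverse chi-squared moment equals $\mathbb{E} V_1^{-q/2} = 2^{-q/2}\,\Gamma((d-q)/2)/\Gamma(d/2)$. Under the assumption $2q + 1 \leq d$ one has $(d-q)/2 \geq d/4$, so a Wendel-type gamma inequality (via the explicit product $\prod_{k=0}^{q/2-1} 1/((d-q)/2 + k)$ for integer $q/2$, extended by log-convexity otherwise) gives $\Gamma((d-q)/2)/\Gamma(d/2) \leq (c/d)^{q/2}$. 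Combining the three factors cancels $d^{q/2}$ and delivers $(\mathbb{E}|x_1|^q)^{1/q} \lesssim \sqrt{q}$. The main obstacle is getting the anti-concentration bounds \emph{uniformly in $\beta$}; the conditional Gaussian representation handles this transparently provided $\sigma^2(V)$ is sandwiched between absolute constants on a high-probability event, which is precisely what the hypothesis $d \gtrsim \log p$ buys through the union bound over the $p$ chi-squared variables.
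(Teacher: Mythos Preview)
Your proposal is correct and follows essentially the same approach as the paper: both use the scale-mixture representation $x_j = \sqrt{(d-2)/d}\,Z_j/\sqrt{V_j/d}$, condition on the $\chi^2_d$ variables, apply the Laurent--Massart tail bound with a union bound over $j\in[p]$ to pin $\sigma^2(V)$ between absolute constants, and then invoke Gaussian anti-concentration. For the moment bound the paper writes down the closed-form $t_d$ absolute moment and controls the gamma ratio via Gautschi's inequality, whereas you factorize through $\mathbb{E}|Z_1|^q$ and $\mathbb{E}V_1^{-q/2}$ and bound the same gamma ratio via a Wendel-type estimate---the two arguments are equivalent.
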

\begin{proof}
Since $x_j \thicksim \sqrt{(d-2)/d} t_d$, we have that 
$$
x_j= \frac{\sqrt{\frac{d-2}{d}} z_j}{\sqrt{\chi^2_{d,j}/d}}
$$
where $z$ denotes a standard Gaussian random variable and $\chi^2_{d,j}$ a chi-squared random variable with $d$ degrees of freedom that is independent of $z$. For $\beta \in \mathcal{S}^{p-1}$ we have, conditionally on the $\chi_{d,j}^2$-variables, that
\begin{align*}
    \langle X, \beta \rangle \bigg | (\chi_{d,j}^2)_{j=1}^p  \thicksim \mathcal{N} \left (0, \frac{d-2}{d}\sum_{i=1}^n \frac{\beta_i^2 d}{\chi_{d,j}^2} \right ).
\end{align*}
Hence, conditioning on the event where $ \chi_{d,j}^2  \geq d/2$ for all $j \in [p]$ and using independence of $z$ and the $\chi_{d,j}^2$-variables and that $\|\beta\|_2 \leq 1$ we obtain that 
\begin{align*}
    \mathbb{P} \left (  |\langle X,\beta \rangle|\leq \varepsilon \right )& \geq \mathbb{P} \left ( |z| \leq  \varepsilon \sqrt{d/(2(d-2))}\right ) \mathbb{P} \left ( \min_{j \in [p]} \chi_{d,j}^2 \geq d/2 \right )  \\
    & \gtrsim \varepsilon \mathbb{P} \left ( \min_{j \in [p]} \chi_{d,j}^2 \geq d/2 \right ) .
\end{align*}
It is left to lower bound the probability involving the minimum. By applying a lower tail bound for chi-square random variables, Lemma 1 in \cite{LaurentMassart00}, and a union bound we obtain  
\begin{align*}
\mathbb{P} \left ( \min_{j \in [p]} \chi_{d,j}^2 < d/2 \right ) \leq p \mathbb{P} \left (  \chi_{d,1}^2 < d/2 \right ) \leq pe^{-d/16} \leq p^{-1} \leq \frac{1}{2},
\end{align*} 
using the conditions on $d$ and $p$. \\

For the upper bound we argue similarly. We have
\begin{align*}
    \mathbb{P} \left ( |\langle X,\beta \rangle| \leq \varepsilon\right ) &  \leq \mathbb{P} \left ( \max_{j \in [p]} \chi^2_{d,j} \geq 2d \right ) + \mathbb{P} \left ( |Z| \leq \varepsilon \sqrt{2d/(d-2)}\right )  \\ & \lesssim \mathbb{P} \left ( \max_{j \in [p]} \chi^2_{d,j} \geq 2d \right ) + \varepsilon. 
\end{align*}
Applying an upper tail bound for chi-square random variables, Lemma 1 in \cite{LaurentMassart00}, we obtain
 $$\mathbb{P} \left ( \max_{j \in [p]} \chi^2_{d,j} > 2d \right ) \leq p \mathbb{P} \left (  \chi^2_{d,1} > 2d  \right ) \leq pe^{-d/16}  \leq p^{-1},$$ thus proving the claimed result. \\
 
Finally, for the claimed moment bound, integration by parts and for $\Gamma$ denoting the Gamma function, give
\begin{align*}
    \mathbb{E} |x_1|^q & = \frac{d^{q/2} \Gamma(\frac{q+1}{2}) \Gamma(\frac{d-q}{2})}{\pi^{1/2} \Gamma(\frac{d}{2}) }.
\end{align*}
We now only consider the case where $q$ is uneven, as the other case follows along the same lines. Indeed, since $d \gtrsim q$, applying Gautschi's inequality and using that $\Gamma(z)\leq z^{z-1/2}$ for $z\geq 1$ we have that
\begin{align*}
     \frac{d^{q/2} \Gamma(\frac{q+1}{2}) \Gamma(\frac{d-q}{2})}{ \Gamma(\frac{d}{2}) } & = \frac{d^{q/2}\Gamma(\frac{q+1}{2}) \Gamma(\frac{d-q}{2}) }{(\frac{d-2}{2} \cdot \dots \cdot \frac{d-q-1}{2} ) \Gamma ( \frac{d-q+1}{2})} \lesssim  \frac{ c^{(q-1)/2} d^{q/2}\Gamma(\frac{q+1}{2}) \Gamma(\frac{d-q}{2}) }{d^{(q-1)/2} \Gamma ( \frac{d-q+1}{2})} \\ 
     & \lesssim \frac{(cd)^{q/2}\Gamma(\frac{q+1}{2}) }{d^{q/2} } \lesssim  (cq)^{q/2}. 
\end{align*}
Taking the $q$-th root concludes the proof. 
\end{proof}

\begin{lemma} \label{Lemma margin studentt} 
Suppose that $X=(x_1, \dots, x_p)$ with $x_j \overset{i.i.d.}{\thicksim} \sqrt{\frac{d-2}{d}}t_{d}$ for $d \gtrsim \log(p)$ and $p \gtrsim 1$. Then, we have for any $\beta,\tilde \beta \in \mathcal{S}^{p-1}$
\begin{align*}
    \mathbb{P} \left ( \sgn{\langle X,\beta \rangle} \neq \sgn{\langle X, \tilde \beta \rangle}\right ) \gtrsim  \|\beta-\tilde \beta\|_2. 
\end{align*}
\end{lemma}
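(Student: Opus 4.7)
The plan is to leverage the scale-mixture representation $x_j = \sqrt{(d-2)/d}\,z_j/\sqrt{\chi^2_{d,j}/d}$, with $z_j \iidsim \mathcal{N}(0,1)$ independent of $(\chi^2_{d,j})_{j=1}^p$, and to condition on the latter. Writing $w_j := \chi^2_{d,j}/d$ and $W := \text{diag}(1/w_j)_{j=1}^p$, the pair $(\iprod{X}{\beta}, \iprod{X}{\tilde\beta})$ is, conditionally on $w$, a centered bivariate Gaussian with correlation $\rho_w = \iprod{\hat\beta_w}{\hat{\tilde\beta}_w}$, where $\hat\beta_w := W^{1/2}\beta/\|W^{1/2}\beta\|_2$ (analogously for $\tilde\beta$). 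Grothendieck's identity then gives the conditional sign-disagreement probability as $\arccos(\rho_w)/\pi$; denoting by $\phi_w$ the Euclidean angle between $\hat\beta_w$ and $\hat{\tilde\beta}_w$ and invoking the chord--arc inequality $2\sin(\phi/2) \le \phi$, I obtain the pointwise lower bound $\p(\textnormal{sign disagreement} \mid w) \ge \|\hat\beta_w - \hat{\tilde\beta}_w\|_2/\pi$.

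Next I introduce the event $\mathcal{E} := \{w_j \in [1/2, 2]~\forall\, j \in [p]\}$. By the chi-squared tail inequality (Lemma 1 of \cite{LaurentMassart00}) together with a union bound --- exactly the argument used in the proof of Lemma \ref{lemma studentt conditions} --- the hypothesis $d \gtrsim \log(p)$ with a sufficiently large implicit constant ensures $\p(\mathcal{E}) \ge 1/2$. On $\mathcal{E}$ the matrix $W^{1/2}$ has condition number at most $2$, and what remains is the geometric statement that the normalizing map $g_w : \mathcal{S}^{p-1} \to \mathcal{S}^{p-1}$, $\beta \mapsto W^{1/2}\beta/\|W^{1/2}\beta\|_2$, is bi-Lipschitz with a universal constant; concretely, $\|\hat\beta_w - \hat{\tilde\beta}_w\|_2 \gtrsim \|\beta - \tilde\beta\|_2$ uniformly on $\mathcal{E}$.

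This bi-Lipschitz bound will be the main technical step. I plan to prove it by restricting $W^{1/2}$ to the two-dimensional subspace $V := \text{span}(\beta, \tilde\beta)$ and computing the differential of $g_w$ directly: for every unit tangent vector $v \in T_\beta \mathcal{S}^{p-1}$ a short calculation gives
\begin{equation*}
\|Dg_w(\beta)v\|_2^2 = \frac{\det(W|_V)}{\|W^{1/2}\beta\|_2^4} \ge \frac{\sigma_2^2}{\sigma_1^2},
\end{equation*}
where $\sigma_1 \ge \sigma_2 > 0$ are the singular values of $W^{1/2}|_V$. The min-max characterization of singular values yields $\sigma_1 \le \|W^{1/2}\|_{\textnormal{op}}$ and $\sigma_2 \ge \sigma_{\min}(W^{1/2})$, so $\sigma_2/\sigma_1 \ge 1/2$ on $\mathcal{E}$. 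Hence $g_w$ contracts geodesic distance on $\mathcal{S}^{p-1}$ by at most a factor of $2$, and the standard chord--arc equivalence on the sphere converts this into the required Euclidean inequality. Integrating over $w$ finally yields
\begin{equation*}
\p(\textnormal{sign disagreement}) \ge \E\bigl[\mathbf{1}_\mathcal{E}\,\p(\textnormal{sign disagreement} \mid w)\bigr] \gtrsim \p(\mathcal{E})\,\|\beta - \tilde\beta\|_2 \gtrsim \|\beta - \tilde\beta\|_2,
\end{equation*}
as claimed.
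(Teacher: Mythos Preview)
Your proposal is correct and shares the paper's entire scaffolding: the scale-mixture representation, conditioning on the $\chi^2$ variables, Grothendieck's identity, the chord--arc bound, and the event $\mathcal{E}=\{w_j\in[1/2,2]\ \forall j\}$ with $\p(\mathcal{E})\ge 1/2$. The only point of divergence is how you establish the bi-Lipschitz estimate $\|\hat\beta_w-\hat{\tilde\beta}_w\|_2\gtrsim\|\beta-\tilde\beta\|_2$ on $\mathcal{E}$. The paper does this by pure algebra: it writes $\|\hat\beta_w-\hat{\tilde\beta}_w\|_2\gtrsim\|\,a\beta-b\tilde\beta\,\|_2$ with $a=\|\tilde\beta_\chi\|_2,\ b=\|\beta_\chi\|_2\in[1/\sqrt2,\sqrt2]$ and then explicitly minimizes $a^2+b^2-2ab\langle\beta,\tilde\beta\rangle$ over a box. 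You instead bound the differential of $g_w$ from below and pass to geodesic (hence chord) distance. Both arguments work; the paper's is slightly more elementary and fully self-contained, while yours is more conceptual and would transfer unchanged to any positive-definite $W$ with bounded condition number.

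One small expositional wrinkle to tighten: you compute $\|Dg_w(\beta)v\|_2^2=\det(W|_V)/\|W^{1/2}\beta\|_2^4$ with $V=\mathrm{span}(\beta,\tilde\beta)$, but then invoke ``$g_w$ contracts geodesic distance on $\mathcal{S}^{p-1}$ by at most a factor of $2$'', which needs the differential bound at \emph{every} point and \emph{every} tangent direction, not just at $\beta$ with $v\in V$. The fix is immediate: your formula is really $\det(W|_{\mathrm{span}(\beta,v)})/\|W^{1/2}\beta\|_2^4$ for arbitrary unit $v\perp\beta$, and since the singular values of $W^{1/2}$ restricted to any $2$-plane lie between $\sigma_{\min}(W^{1/2})$ and $\|W^{1/2}\|_{\mathrm{op}}$, the bound $\sigma_2/\sigma_1\ge 1/2$ holds uniformly on $\mathcal{E}$. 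Alternatively, you can keep the restriction to $V$ and note that $g_w$ sends the great circle $\mathcal{S}^{p-1}\cap V$ bijectively (and antipodally) to $\mathcal{S}^{p-1}\cap W^{1/2}V$, so the image of the short arc is the short arc and its length equals $\phi_w$.
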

\begin{proof}
Since $x_j \thicksim \sqrt{(d-2)/d} t_d$, we have that 
$$
x_j= \frac{\sqrt{\frac{d-2}{d}} z_j}{\sqrt{\chi^2_{d,j}/d}}
$$
where $z$ denotes a standard Gaussian random variable and $\chi^2_{d,j}$ a chi-squared random variable with $d$ degrees of freedom that is independent of $z$. Denote $\beta_\chi=(\beta_j \sqrt{d/\chi^2_{d,j}})_{j\in [p]}$ and note that on the event $\{d/2 \leq \chi^2_{d,j} \leq 2d ~\forall 1 \leq j \leq p\}$ we have $\sqrt{1/2} \leq \|\beta_\chi\|_2 \leq \sqrt{2}$. 
Then, we have, conditioning on the $\chi^2_{d,j}$-variables and using Grothendieck's identity, Lemma 3.6.6. in \cite{vershynin2018high}, that
\begin{align*}
        \mathbb{P} \left ( \sgn{\langle X,\beta \rangle} \neq \sgn{\langle X, \tilde \beta \rangle}\right ) &  =  \frac{\mathbb{E} \arccos \left (\left \langle \frac{ \beta_\chi}{\|\beta_\chi\|_2}, \frac{\tilde \beta_\chi}{\|\tilde \beta_\chi\|_2} \right  \rangle \right )}{\pi} \\ & \geq \mathbb{E} \left \|\frac{ \beta_\chi}{\|\beta_\chi\|_2} -  \frac{\tilde \beta_\chi}{\|\tilde \beta_\chi\|_2}\right \|_2.
        \end{align*}
        We further bound, using that $\beta$ and $\tilde \beta$ have unit norm
        \begin{align*}
       \mathbb{E} \left \|\frac{ \beta_\chi}{\|\beta_\chi\|_2} -  \frac{\tilde \beta_\chi}{\|\tilde \beta_\chi\|_2}\right \|_2 & \gtrsim \mathbb{E} \left [ \left ( \|\beta_\chi \|\tilde \beta_\chi\|_2-\tilde \beta_\chi \|\beta_\chi\|_2 \|_2 \right ) \mathbf{1}(d/2 \leq \chi^2_{d,j} \leq 2d ~\forall 1 \leq j \leq p) \right ] \\
       & = \mathbb{E} \left [\sqrt{\sum_{j=1}^p  \frac{\left ( \beta_j\|\tilde \beta_\chi\|_2-\tilde \beta_j \|\beta_\chi\|_2 \right )^2}{\chi_{d,j}^2/d}} \mathbf{1}(d/2 \leq \chi^2_{d,j} \leq 2d ~\forall 1 \leq j \leq p) \right ] \\ 
       & \gtrsim \mathbb{E} \left [\left (\|\beta \|\tilde \beta_\chi\|_2-\tilde \beta \|\beta_\chi\|_2 \|_2 \right ) \mathbf{1}(d/2 \leq \chi^2_{d,j} \leq 2d ~\forall 1 \leq j \leq p)  \right ]\\
       & \geq \min_{a,b \in [1/2,2]}  \sqrt{a^2+b^2-2ab\langle \beta,\tilde \beta \rangle} \mathbb{P} \left ( d/2 \leq \chi^2_{d,j} \leq 2d ~\forall 1 \leq j \leq p\right ). 
\end{align*}
By Lemma 1 in \cite{LaurentMassart00}, a union bound and by our assumption on $d$ and $p$ we have that 
\begin{align*}
   \mathbb{P} \left (  d/2 \leq \chi^2_{d,j} \leq 2d ~\forall 1 \leq j \leq p \right ) \geq (1-2pe^{-d/16}) \geq 1/2.
\end{align*}
Hence, it is left to lower bound the quadratic equation $\min_{a,b \in [1/2,1]} (a^2+b^2-2ab\langle \beta,\tilde \beta \rangle)$. If $\langle \tilde \beta, \beta \rangle \leq 0$ it is clear that the minimum is attained at $a=b=1/2$. Conversely, if $\langle \tilde \beta,\beta \rangle >0$, we have since $0 <\langle \tilde \beta, \beta \rangle \leq 1$
\begin{align*}
    \min_{a,b \in [1/2,1]} (a^2+b^2-2ab\langle \beta,\tilde \beta \rangle) & \geq \min_{a \in \mathbb{R}, b \in [1/2,1]} (a^2+b^2-2ab \langle \beta, \tilde \beta \rangle ) \\ & = \min_{b \in [1/2,1]} b^2 \left(1-\langle \beta, \tilde \beta \rangle^2 \right)  \gtrsim (1-\langle \beta,\tilde \beta \rangle).
\end{align*}
Hence, summarizing, we have that
\begin{align*}
    \min_{a,b \in [1/2,2]} \sqrt{a^2+b^2-2ab\langle \beta,\tilde \beta \rangle} \gtrsim \sqrt{(2-2\langle \beta,\tilde \beta \rangle)} = \|\beta -\tilde \beta\|_2,
\end{align*}
thus concluding the proof. 
\end{proof}

\subsubsection{Proof of Corollary \ref{cor rate unimodal} }
\begin{proof} 
The proof of Corollary \ref{cor rate unimodal} follows mainly from Lemma \ref{lemma anti-concentration unimodal} below, which shows that the anti-concentration condition is satisfied for unimodal features with bounded density, and by noting that the weak moment assumption is satisfied for Laplace distributed features with $\zeta=1$, for student-t with at least $2 \log(p)+1$ degrees of freedom by Lemma \ref{lemma studentt conditions} with $\zeta=1/2$ and for uniform and Gaussian features with $\zeta=1/2$ as they are sub-Gaussian. 
\end{proof} 
\begin{lemma} \label{lemma anti-concentration unimodal}
Suppose that $X=(x_1, \dots,x_p)$ consists of i.i.d. symmetric and unit variance scalar random variables with density $f$. Suppose that $\|f\|_\infty \lesssim 1 $ and that $f$ is unimodal, i.e. $f(aw)\geq f(w)$ for any $0\leq a \leq 1$ and any $w \in \mathbb{R}$.  Then, we have for $0 \leq \varepsilon \leq 1$ that
\begin{align*}
   \sup_{\beta \in \mathcal{S}^{p-1}}  \mathbb{P} \left (  |\langle X, \beta \rangle| \leq \varepsilon \right ) \lesssim  \varepsilon^{1/2} \mathbb{E}|x_1|^3. 
\end{align*}
\end{lemma}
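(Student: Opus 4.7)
The plan is to split the estimate according to $\|\beta\|_\infty$, with threshold $\sqrt{\varepsilon}$. The two regimes are handled by different tools, and $\sqrt{\varepsilon}$ is the threshold that balances the two bounds; this is what forces the exponent $1/2$ (and hence $\alpha = 1/2$ in Corollary~\ref{cor rate unimodal}).

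In the \emph{localized} regime $\|\beta\|_\infty \ge \sqrt{\varepsilon}$, I would pick $j^\star \in \arg\max_j |\beta_j|$ and condition on $(x_j)_{j \neq j^\star}$, writing $\langle X, \beta \rangle = Y + \beta_{j^\star} x_{j^\star}$ with $Y$ independent of $x_{j^\star}$. Since $\beta_{j^\star} x_{j^\star}$ has density bounded by $\|f\|_\infty/|\beta_{j^\star}|$, the conditional probability that $|\langle X,\beta\rangle| \le \varepsilon$ is at most $2\varepsilon \|f\|_\infty / |\beta_{j^\star}|$. Taking expectation and using $|\beta_{j^\star}| = \|\beta\|_\infty \ge \sqrt{\varepsilon}$ together with $\|f\|_\infty \lesssim 1$ yields a bound of order $\sqrt{\varepsilon}$.

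In the \emph{delocalized} regime $\|\beta\|_\infty < \sqrt{\varepsilon}$, I would apply the Lyapunov form of the Berry--Esseen theorem to $S = \sum_j \beta_j x_j$, whose summands are independent with mean zero (by symmetry of $\mu$) and variances summing to $\|\beta\|_2^2 = 1$. The Lyapunov ratio satisfies $\sum_j |\beta_j|^3 \mathbb{E}|x_j|^3 \le \|\beta\|_\infty \mathbb{E}|x_1|^3 \le \sqrt{\varepsilon}\, \mathbb{E}|x_1|^3$, so Berry--Esseen yields Kolmogorov distance between $S$ and $N(0,1)$ of order $\sqrt{\varepsilon}\,\mathbb{E}|x_1|^3$. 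Combining with the Gaussian estimate $\mathbb{P}(|Z| \le \varepsilon) \lesssim \varepsilon$ and the bound $\mathbb{E}|x_1|^3 \ge (\mathbb{E} x_1^2)^{3/2} = 1$ from Jensen's inequality gives $\mathbb{P}(|S| \le \varepsilon) \lesssim \sqrt{\varepsilon}\, \mathbb{E}|x_1|^3$.

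Both bounds are uniform in $\beta$, so combining them and taking the supremum over $\beta \in \mathcal{S}^{p-1}$ yields the claim. The argument is essentially mechanical once the right threshold is identified; the main structural observation is that unimodality of $f$ plays no explicit role in my proof beyond ensuring that the assumption $\|f\|_\infty \lesssim 1$ is natural, and is likely included for easy verification in the examples listed in Corollary~\ref{cor rate unimodal}.
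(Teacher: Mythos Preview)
Your proof is correct and follows the same overall route as the paper: the same case split at threshold $\sqrt{\varepsilon}$, with the delocalized case handled identically via Berry--Esseen and Jensen. The one difference is in the localized case $\|\beta\|_\infty \ge \sqrt{\varepsilon}$: the paper invokes Anderson's theorem (using unimodality of $\beta_1 x_1$) to deduce $\mathbb{P}(|\beta_1 x_1 + Y| \le \varepsilon) \le \mathbb{P}(|\beta_1 x_1| \le \varepsilon) \le \mathbb{P}(|x_1| \le \sqrt{\varepsilon}) \lesssim \sqrt{\varepsilon}$, whereas you bypass Anderson entirely by conditioning and using only the uniform bound $\|f\|_\infty \lesssim 1$ on the density. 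Your remark that unimodality plays no explicit role is therefore exactly right, and in fact your argument slightly sharpens the lemma: the conclusion already holds under the weaker hypothesis of a bounded density (together with symmetry and unit variance for the Berry--Esseen step), with unimodality serving only to make the boundedness assumption easy to check in the examples of Corollary~\ref{cor rate unimodal}.
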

\begin{proof}
We consider two cases. If $\|\beta\|_\infty \leq \varepsilon^{1/2}$, then, by the Berry-Essen Theorem, e.g. Theorem 3.6. in \cite{ChenGoldSteinShao11book}
\begin{align}
    \mathbb{P} \left (  |\langle X, \beta \rangle| \leq \varepsilon \right ) & \lesssim \ \varepsilon + \sum_{j=1}^p |\beta_j|^3  \mathbb{E}|x_1|^3\lesssim  \varepsilon + \varepsilon^{1/2} \mathbb{E}|x_1|^3 \\ & \lesssim \varepsilon^{1/2} \mathbb{E}|x_1|^3. 
\end{align}
If $\|\beta\|_\infty \geq \varepsilon^{1/2}$, we argue as follows. Assume, without loss of generality, that $|\beta_1|\geq \varepsilon^{1/2}$. We note that since $x_1$ is unimodal that $\beta_1 x_1$ is unimodal, too. Then, since $\beta_1x_1$ is unimodal (see e.g. Theorem 1 in \cite{Anderson55}), we obtain that
\begin{align*}
    \mathbb{P} \left (  |\langle X, \beta \rangle| \leq \varepsilon \right ) & = \mathbb P\left (  |\beta_1 x_1+ \sum_{j=2}^p x_j \beta_j| \leq \varepsilon \right ) \leq \mathbb{P} \left ( |\beta_1 x_1|\leq \varepsilon \right ) \leq \mathbb{P} \left ( | x_1|\leq \varepsilon^{1/2} \right ) \\ & \lesssim \varepsilon^{1/2} \leq \varepsilon^{1/2} \mathbb{E}|x_1|^3, 
\end{align*}
as by Jensen's inequality $\mathbb{E}|x_1|^3 \geq (\mathbb{E}|x_1|^2)^{3/2}=1.$
\end{proof}

\section*{Acknowledgements}
GC and ML are funded in part by ETH Foundations of Data Science (ETH-FDS). 
Moreover, ML would like to thank C.S. Lorenz and M.D. Wong for helpful comments and FK would like to thank D. Fan for help with the Euler Cluster.

\bibliography{thesisBibliography}
\bibliographystyle{alpha}

\appendix
\section{Dual formulation of the max $\ell_1$-margin} \label{dual formulation}

We use Lagrangian  duality to derive the dual version of the max $\ell_1$-margin. Recall that 
$$
\gamma  = \max_{\beta \neq 0} \min_{1\leq i \leq n} \frac{y_i \langle X_i, \beta \rangle}{\| \beta \|_1} = \frac{1}{\| \hat \beta \|} ,
$$
where we used Lemma~\ref{lemma margin sur}, recalling that
\begin{equation} \label{primal_problem}
    \hat \beta  \in \argmin_{\beta \in \mathbb R^p} \| \beta\|_1 \quad \text{subject to} \quad y_i \langle X_i,\beta \rangle \geq 1 .
\end{equation}
For every $\lambda \in \mathbb R^n $, define the Lagrangian $\mathcal L: \mathbb R^p \times \mathbb R^n \mapsto \mathbb R$ as
$$
\mathcal L(\beta,\lambda) = \| \beta \|_1 + \sum_{i=1}^n \lambda_i \big( 1- y_i \langle \beta,X_i \rangle    \big) .
$$
The dual problem of \eqref{primal_problem} is defined as
\begin{equation} \label{dual_problem}
    \sup_{\lambda \in \mathbb R^n_+} \inf_{\beta \in \mathbb  R^p} \mathcal L(\beta,\lambda).
\end{equation}
We have that 
\begin{align*}
   \inf_{\beta \in \mathbb R^p} \mathcal L(\beta,\lambda)&  = \inf_{\beta \in \mathbb R^p} \big \{  \| \beta \|_1 + \sum_{i=1}^n \lambda_i \big(1 - y_i\langle \beta,X_i \rangle   \big) \big \} \\
   & = \sum_{i=1}^n \lambda_i - \sup_{\beta \in \mathbb R^p} \big \{   \langle \beta, \sum_{i=1}^n \lambda_i y_i  X_i \rangle  - \| \beta\|_1   \big\}. 
\end{align*}
For any function $f: \mathbb R^p \mapsto \mathbb R$, the conjugate $f^*$ is defined as
\begin{equation} \label{def_conjugate}
    f^*(y) = \sup_{x \in \mathbb R^p} \big \{ \langle x,y \rangle - f(x)  \big \} .
\end{equation}
In particular (see \cite{boyd2004convex}, Example 3.26), when $f(\beta) = \|\beta\|_1$, we have that 
\begin{equation} \label{conjugate_norm}
f^*(y) = \left\{
    \begin{array}{ll}
        0 & \mbox{if } y \in B_{\infty}  \\
        \infty & \mbox{otherwise},
    \end{array}
\right.
\end{equation}
where $B_{\infty}$ is the unit ball with respect $\| \cdot \|_{\infty}$. 
From \eqref{def_conjugate} and~\eqref{conjugate_norm}, the dual problem \eqref{dual_problem} can be rewritten as
$$
\sup_{\lambda \in \mathbb R^n_+}  \sum_{i=1}^n \lambda_i \quad \textnormal{subject to} \quad \left\| \sum_{i=1}^n y_i \lambda_i X_i \right\|_{\infty} \leq 1.
$$
Since the $X_i$ are linearly independent with probability one and $p>n$, the Moore-Penrose inverse of $\mathbb X=[X_1,\dots, X_n]$ exists and hence there exists some  exists $\beta$ in $\mathbb R^p$ such that $y_i\langle X_i,\beta \rangle = 1$ for $i=1,\cdots,n$. Hence, Slater's condition is satisfied and consequently there is no duality gap. It follows that 
$$
\gamma = \frac{1}{\| \hat \beta\|_1 } = \inf_{w:~w_i \geq 0 ~\forall i \in [n],\|w\|_1=1} \left\| \sum_{i=1}^n w_i y_i X_i \right\|_{\infty}.
$$

\section{Extra Lemmas}

\subsection{Lemma~\ref{control infty norm log moments}}

\begin{lemma} \label{control infty norm log moments}
Let $X = (x_{1},\cdots, x_{p})^T$ be a random vector where the $x_{j}$'s are i.i.d random  variables that satisfy the weak moment assumption with $\zeta \geq 1/2$. 
Then, with probability at least $1-p^{-2}$ we have that \begin{align}\| X \|_{\infty} \lesssim \log^{\zeta}(p).
\end{align}
Moreover, let $X_1, \dots, X_n$, $n \leq p$, be $n$ i.i.d. copies of $X$ and $\beta \in \mathcal{S}^{p-1}$. Then, we have additionally with probability at least $1-n^{-2}$
\begin{align*}
 \max_{i \in [n]} |\langle X_i, \beta^* \rangle| \lesssim \log^{1/2+ \zeta}(n).
\end{align*}
\end{lemma}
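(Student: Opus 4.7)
My plan is to prove both parts by combining a moment bound with Markov's inequality and a union bound --- the standard recipe for $L^q$-based tail control under polynomial moment assumptions.

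For the first part, I would take $q = \lceil \log p \rceil$ and apply the weak moment assumption coordinate-wise. This gives $\mathbb{E}|x_j|^q \le (c q^\zeta)^q$, so that Markov yields
\[
\mathbb{P}\bigl(|x_j| > t\bigr) \le \left(\frac{c q^\zeta}{t}\right)^{q}
\]
for every $t>0$. Choosing $t = C \log^\zeta(p)$ with $C$ large enough that $\log(C/c) \ge 3$ reduces this to $p^{-3}$, and a union bound over the $p$ coordinates of $X$ produces $\|X\|_\infty \le C\log^\zeta(p)$ with probability at least $1-p^{-2}$.

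For the second part, my plan is to show first that $\langle X_i,\beta^*\rangle$ itself obeys a weak moment bound of order $\log(p)$, and then to repeat the Markov-plus-union-bound argument at the level of the scalar variables $\langle X_i,\beta^*\rangle$, $i \in [n]$. Since $\|\beta^*\|_2 = 1$ and the coordinates of $X_i$ are i.i.d., centered, with unit variance and weak moments, a Rosenthal-type inequality for sums of independent centered random variables gives, for $2 \le q \le \log(p)$,
\[
\left(\mathbb{E}\bigl|\langle X_i, \beta^*\rangle\bigr|^q\right)^{1/q} \;\lesssim\; \sqrt{q}\,\|\beta^*\|_2 \;+\; q^\zeta \|\beta^*\|_q \;\lesssim\; q^{\,1/2+\zeta},
\]
using $\|\beta^*\|_q \le \|\beta^*\|_2 = 1$ for $q \ge 2$. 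Taking $q = \log n \le \log p$ (since $n \le p$) and $t \asymp \log^{1/2+\zeta}(n)$ in Markov gives $\mathbb{P}(|\langle X_i,\beta^*\rangle| > t) \le n^{-3}$ after choosing constants suitably, and a union bound over $i \in [n]$ delivers the stated bound with probability at least $1-n^{-2}$.

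The main obstacle is the Rosenthal-type moment bound in the second step: one needs a version whose constant $K_q^{1/q}$ in the ``weak'' (variance) term is $O(\sqrt{q})$ rather than $O(q)$, so that the dominant contribution $\sqrt{q}\|\beta^*\|_2$ is controlled by $\sqrt{q}$ instead of a larger power. The classical split form of Rosenthal's inequality for independent centered random variables, with $\sqrt{q}(\sum_j \mathbb{E}y_j^2)^{1/2}$ plus $q^\zeta(\sum_j |\beta_j^*|^q)^{1/q}$, suffices; the extra $\sqrt{\log n}$ factor hidden in $\log^{1/2+\zeta}(n)$ compared with the ``sharp'' rate $\log^\zeta(n)$ is exactly the slack that makes the argument routine rather than delicate. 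No anti-concentration is used here -- only independence, centering, unit variance, and the weak moment assumption.
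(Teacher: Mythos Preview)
Your proposal is correct and follows essentially the same route as the paper: bound the $q$-th moment, apply Markov with $q\asymp\log p$ (resp.\ $\log n$), and union-bound. The only substantive difference is in the moment inequality for the second part. The paper uses Rio's sharp Marcinkiewicz--Zygmund inequality, which gives directly
\[
\bigl(\mathbb{E}|\langle X,\beta^*\rangle|^q\bigr)^{1/q}\;\le\;\sqrt{q-1}\,\Bigl(\sum_j|\beta_j^*|^2(\mathbb{E}|x_j|^q)^{2/q}\Bigr)^{1/2}\;\lesssim\;q^{1/2+\zeta},
\]
whereas you invoke a ``Rosenthal-type'' two-term bound $\sqrt{q}\,\|\beta^*\|_2+q^{\zeta}\|\beta^*\|_q$. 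Be careful here: the classical Rosenthal inequality carries a constant of order $q/\log q$ on the $\ell^q$ term, which together with $\|\beta^*\|_q\le 1$ would only yield $q^{1+\zeta}/\log q$ and would not suffice. The inequality you actually need --- with an $O(1)$ constant on the strong term and $O(\sqrt{q})$ on the variance term --- is precisely Rio's version (or an equivalent sharp MZ-type bound), not standard Rosenthal. With that substitution your argument is exactly the paper's.
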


\begin{proof}

We have that \begin{align*}
   \left (  \mathbb{E} (\max_{j \in [p]} |x_{j}|)^{q} \right )^{1/q} \leq p^{1/q} \left ( \mathbb{E} |x_{1}|^{q} \right )^{1/q}  \lesssim p^{1/q} {q}^{\zeta }. 
\end{align*}
Hence, by Markov's inequality,
\begin{align*}
    \mathbb{P} \left ( \|X\|_\infty > t \right ) \leq e^{\log(p)+cq + \zeta q\log(q)-q \log(t)}.
\end{align*}
Choosing  $q= \log(p)$ and $t \asymp \log^{\zeta}(p)$ concludes the proof of the first claim.

For the second claim we argue as follows. By Rio's version of the Marcinkiewicz-Zygmund inequality, Theorem 2.1. in \cite{Rio09}, we have that
\begin{align*}
 \left (  \mathbb{E} 
\max_{i \in [n]} |\langle X_i, \beta \rangle|^{q} \right )^{1/q} &  \leq n^{1/q} \left (\mathbb{E} |\langle X, \beta \rangle|^{q} \right )^{1/q} \leq n^{1/q} {q}^{1/2} \left (\sum_{j=1}^p |\beta_j|^2 (\mathbb{E} |x_{j}|^{q})^{2/q}  \right )^{1/2}\\ & \lesssim n^{1/q}  {q}^{1/2+\zeta}.
\end{align*}

Arguing as before with $q=\log(n) $ concludes the proof. 
\end{proof}

\subsection{Proposition~\ref{quotient property}}

\begin{proposition}[Theorem 5~\cite{KrahmerKuemmerleRauhut18}] \label{quotient property}
Let $X_1,\cdots, X_n$ be i.i.d random vectors distributed as $X = (x_{1},\cdots, x_{p})^T$, where the $x_{j}$'s are i.i.d symmetric, zero mean and unit variance random variables that satisfy the weak moment assumption.
For  $ Z \in \mathbb R^n$ and $\mathbb{X} = [X_1, \cdots X_n]$ define
$$
\hat \nu := \argmin_{\beta \in \mathbb R^p} \| \beta \|_1 \quad \text{such that} \quad \mathbb X^T \beta =  Z.
$$
Assume that $p \gtrsim n$. Then,  with probability at least $1-2\exp(-2n)$ we have that
$$
\| \hat \nu \|_1 \lesssim \frac{\|Z \|_2}{\sqrt{\log(ep/n) }} +  \|  Z \|_{\infty} .
$$
\end{proposition}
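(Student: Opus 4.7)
This proposition is a heavy-tailed analogue of Wojtaszczyk's classical $\ell_1$-quotient property. By the minimality of $\hat\nu$, it suffices to exhibit some feasible $\beta$ with $\mathbb{X}^T\beta = Z$ satisfying the claimed $\ell_1$-norm bound, which is equivalent to the set containment
$$ K \; := \; c\sqrt{\log(ep/n)}\, B_2^n \, \cap \, c' B_\infty^n \; \subset \; \mathbb{X}^T B_1^p $$
holding with probability at least $1 - 2e^{-2n}$. Indeed, the Minkowski gauge of $\mathbb{X}^T B_1^p$ evaluated at $Z$ equals $\|\hat\nu\|_1$, while the Minkowski gauge of $K$ at $Z$ equals $\max\{\|Z\|_2/(c\sqrt{\log(ep/n)}),\,\|Z\|_\infty/c'\}$, which is at most the sum of the two terms.

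By LP duality applied to the basis-pursuit problem defining $\hat\nu$, the containment $K \subset \mathbb{X}^T B_1^p$ is equivalent to the uniform-in-$u$ lower bound
$$\|\mathbb{X}u\|_\infty \; \geq \; h_K(u) \quad \text{for all } u \in \mathbb{R}^n,$$
where $h_K$ is the support function of $K$. Since $h_K(u) \leq \min\{c\sqrt{\log(ep/n)}\,\|u\|_2,\, c'\|u\|_1\}$, it suffices to control $\|\mathbb{X}u\|_\infty$ from below by whichever of $\|u\|_2$ or $\|u\|_1$ is smaller (after normalisation). In the sub-Gaussian setting of Wojtaszczyk only the $\|u\|_2$-branch is needed; the $\|u\|_1$-branch, which produces the additive $\|Z\|_\infty$ in the conclusion, is what allows the argument to accommodate heavy tails.

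The pointwise lower bound $\|\mathbb{X}u\|_\infty \gtrsim \sqrt{\log(ep/n)}\,\|u\|_2$ (valid for sufficiently spread-out $u$) I would establish via Mendelson's small-ball method applied to the inner products of $u$ with the $p$ i.i.d.\ rows of $\mathbb{X}$: the weak moment assumption yields a Paley--Zygmund-type lower bound $\mathbb{P}(|\langle \mathrm{row}_j, u\rangle| \geq c\|u\|_2) \gtrsim 1$, so an order-statistics argument over $p \gtrsim n$ independent rows produces at least one row with inner product of order $\sqrt{\log(ep/n)}\,\|u\|_2$ with probability $1 - 2e^{-2n}$. Uniformisation in $u$ proceeds via an $\varepsilon$-net on the sphere, paying the discretisation error through $\|\mathbb{X}\|_\infty \lesssim \log^\zeta(p)$ from Lemma~\ref{control infty norm log moments}.

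The main obstacle is squeezing out the full $\sqrt{\log(ep/n)}$ improvement despite the absence of sub-Gaussian tails: one has to truncate the entries of $\mathbb{X}$ at level $\sim \log^\zeta(p)$, absorb the truncation error using the $\|u\|_1$-branch of the support function, and patch the two regimes together. This truncation/splitting is precisely the refinement of Wojtaszczyk's Gaussian argument carried out in \cite{KrahmerKuemmerleRauhut18}, whose Theorem~5 is the statement quoted above and which is cited directly in the present paper.
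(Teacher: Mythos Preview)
The paper does not give its own proof of this proposition: it is stated in the appendix with the attribution ``Theorem~5 in~\cite{KrahmerKuemmerleRauhut18}'' and invoked as a black box in the proof of Proposition~\ref{prop upper bound l1 norm}. There is therefore no paper proof to compare against; your write-up correctly identifies the external source and sketches the strategy of that reference rather than anything the present paper does.

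As a sketch of the Krahmer--K\"ummerle--Rauhut argument, your outline is broadly faithful. The reduction to the dual inequality $\|\mathbb{X}u\|_\infty \geq h_K(u)$ and the observation that the intersection with $c'B_\infty^n$ (equivalently, the additive $\|Z\|_\infty$ term) is precisely what accommodates heavy tails are both on point. One place where your description is slightly misleading: a Paley--Zygmund small-ball estimate alone only yields a constant-level lower bound $\mathbb{P}(|\langle \text{row}_j,u\rangle|\geq c\|u\|_2)\gtrsim 1$, which via order statistics over $p$ rows does not by itself produce the $\sqrt{\log(ep/n)}$ gain---that factor comes from a Gaussian-type upper tail, which heavy-tailed entries do not have. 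The actual mechanism in \cite{KrahmerKuemmerleRauhut18}, which you allude to in your final paragraph, is to truncate the entries at a polylogarithmic level so that the truncated matrix has sub-Gaussian rows (and hence Wojtaszczyk's argument applies to it), while the truncation remainder is absorbed by the $\|u\|_1$-branch and surfaces as the $\|Z\|_\infty$ term. So the truncation is not merely a technical patch but is the source of the logarithmic factor in the heavy-tailed setting.
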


\subsection{Rademacher complexity under weak moment assumption} 

\begin{proposition} \label{control Rademacher complexity low moments}
Assume that $\mathbb{X}=(X_i)_{i \in [n]}$ has i.i.d. zero mean and unit variance entries and satisfies the weak moment assumption with $\zeta \geq 1/2$. For $a \in \mathbb{N}$ we have that 
$$
\mathbb E \sup_{\beta \in  aB_1^p \cap B_2^p} \frac{1}{n} \sum_{i=1}^n \sigma_i \langle X_i , \beta \rangle \lesssim   a \sqrt \frac{ \log(p)}{n}. 
$$
\end{proposition}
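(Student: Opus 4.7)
My first move is to drop the $B_2^p$ constraint (it can only tighten the supremum) and apply Hölder's inequality: for every $v \in \mathbb{R}^p$, $\sup_{\beta \in a B_1^p}\langle v,\beta\rangle = a\|v\|_\infty$. Applying this with $v = \tfrac{1}{n}\sum_i \sigma_i X_i$ yields
$$
\mathbb{E}\sup_{\beta\in aB_1^p\cap B_2^p}\frac{1}{n}\sum_{i=1}^n \sigma_i\langle X_i,\beta\rangle \;\le\; \frac{a}{n}\,\mathbb{E}\max_{j\in[p]}|S_j|, \qquad S_j:=\sum_{i=1}^n\sigma_i x_{i,j}.
$$
The task thereby reduces to bounding the expected maximum of $p$ centred, symmetric scalar sums.

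\textbf{Moment bound for a single $S_j$.} Conditioning on $\mathbb{X}$ and applying the scalar Khintchine inequality for Rademacher sums gives, for $q\ge 2$, that $(\mathbb{E}_\sigma|S_j|^q)^{1/q} \lesssim \sqrt q\,(\sum_i x_{i,j}^2)^{1/2}$. Taking the $L^q$-norm over $\mathbb{X}$ and invoking the triangle inequality in $L^{q/2}$ for the non-negative i.i.d.\ summands $x_{i,j}^2$ produces
$$
\bigl\|\textstyle\sum_{i=1}^n x_{i,j}^2\bigr\|_{L^{q/2}} \;\le\; n\,\|x_{1,j}^2\|_{L^{q/2}} \;=\; n\,(\mathbb{E}|x_{1,j}|^q)^{2/q}.
$$
The weak moment assumption gives $(\mathbb{E}|x_{1,j}|^q)^{1/q}\lesssim q^\zeta$ for all $q\le\log p$, so overall $\|S_j\|_q \lesssim \sqrt n\, q^{1/2+\zeta}$.

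\textbf{Union bound and conclusion.} A standard $L^q$ union bound gives $\mathbb{E}\max_j|S_j| \le p^{1/q}\max_j\|S_j\|_q$; choosing $q=\lceil\log p\rceil$ produces $\mathbb{E}\max_j|S_j|\lesssim \sqrt{n}\,\log^{1/2+\zeta}(p)$. Dividing by $n$ and multiplying by $a$ gives the stated bound $a\sqrt{\log(p)/n}$ up to a $\log^\zeta(p)$ factor inherited from the heavy tails. To recover the clean form I would refine the argument by truncating each $x_{i,j}$ at level $M\asymp\log^\zeta(p)$: by the weak moment assumption combined with a union bound, with probability $\ge 1-np^{-2}$ the truncation is inactive, and Bernstein's inequality applied to the truncated symmetric variables $\sigma_i\tilde x_{i,j}$ (variance $\le 1$, bounded by $M$), together with a union bound over $j\in[p]$, yields $\max_j|\tilde S_j|\lesssim\sqrt{n\log p}+M\log p$. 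In the natural regime $n\gtrsim \log^{1+2\zeta}(p)$ the Bernstein term dominates, and the low-probability contribution from the complementary event is controlled by Cauchy–Schwarz against the crude bound $\mathbb{E}\max_j|S_j|^2\le pn$.

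\textbf{Main obstacle.} The genuine technical point is the accurate control of heavy-tailed moments. The direct Khintchine/Marcinkiewicz route loses an extra $\log^{\zeta-1/2}(p)$ factor from the $q$-th moments of $x_{i,j}$; removing it requires choosing the truncation level precisely at $\log^\zeta(p)$, applying Bernstein on the bounded part, and absorbing the tail event without paying a polynomial-in-$p$ cost. This is the delicate part of the argument, but it is what aligns the Rademacher complexity bound with the sub-Gaussian-like rate used downstream in Proposition~\ref{control norm ratio heavy tail no small dev}.
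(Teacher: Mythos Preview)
Your approach differs from the paper's in two places. First, you discard the $B_2^p$ constraint and reduce via H\"older to $\frac{a}{n}\,\mathbb{E}\max_{j}|S_j|$; the paper instead keeps $B_2^p$ and invokes the inclusion $aB_1^p\cap B_2^p\subset 2\,\mathrm{conv}\bigl(B_0^p(a^2)\cap B_2^p\bigr)$ from \cite{mendelson2005reconstruction}, which converts the supremum into $\bigl(\sum_{i\le a^2}(W_i^*)^2\bigr)^{1/2}$ for $W=n^{-1/2}\sum_i\sigma_iX_i$ and then appeals to the order-statistics Lemma~\ref{lemma ordering}. In this problem both reductions lead to the same endpoint (control of the top order statistics of $W$), so your simplification is legitimate and arguably cleaner.

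The substantive divergence is in the moment bound for $W_j=n^{-1/2}S_j$. Your triangle inequality in $L^{q/2}$ produces $\|W_j\|_q\lesssim q^{1/2+\zeta}$, and that extra $q^{\zeta}$ is exactly the $\log^{\zeta}(p)$ you then have to scrub off by truncation and Bernstein, at the cost of the regime restriction $n\gtrsim\log^{1+2\zeta}(p)$. The paper takes a different line here: it asserts that the Rademacher symmetrization already makes $W_j$ sub-Gaussian in moments, namely $\|W_j\|_q\lesssim\sqrt q$ for all $q\le\log p$ and \emph{irrespective of $\zeta$}, via Khintchine--Kahane conditionally on $\mathbb{X}$ followed by Jensen on $\bigl(n^{-1}\sum_i X_{i,j}^2\bigr)^{1/2}$; Lemma~\ref{lemma ordering} is then applied with parameter $1/2$ rather than $\zeta$, yielding the stated bound without truncation and without any restriction on $n$. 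So the step you flag as the ``main obstacle'' is precisely where the two arguments part ways. (The Khintchine--Jensen chain in the paper bears careful reading: the passage from $\|W_j\|_q$ to $\sqrt q\,\mathbb{E}\bigl(n^{-1}\sum_i X_{i,j}^2\bigr)^{1/2}$ is delicate, and your instinct that the heavy tails matter here is not unfounded.) Your repair is correct in the regime where the proposition is actually used downstream---the hypothesis on $r_n$ in Proposition~\ref{control norm ratio heavy tail} already encodes $r_n^2\log^{2\zeta+1}(p)\log(n)\lesssim n$---but it does not establish the proposition as stated for arbitrary $n$.
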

The proof of Proposition \ref{control Rademacher complexity low moments} uses the following bound for sums of order statistics and will be presented below. 
\begin{lemma} \label{lemma ordering}
Assume that $X = (x_{1},\cdots, x_{p})^T$ has i.i.d symmetric, zero mean and unit variance entries  that satisfy the weak moment assumption with $\zeta \geq 1/2$.
Then, for all $1\leq k \leq p$ we have
$$
\mathbb E \left( \sum_{i=1}^k (x_i^*)^2 \right)^{1/2} \lesssim  \log^{\zeta}(p) \sqrt k  ,
$$
where $(x_i^*)_i^p$ is a monotone non-increasing rearrangement of $(| x_i |)_{i=1}^p$. 
\end{lemma}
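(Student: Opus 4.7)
The plan is to reduce the sum of order statistics to the $\ell_\infty$-norm of $X$ via a trivial pointwise bound and then control the $\ell_\infty$-norm using the weak moment assumption with the optimal moment order $q \asymp \log p$.

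First I would note the crude but sufficient bound: since $x_i^* \leq x_1^* = \|X\|_\infty$ for every $1 \leq i \leq k$, we have
\begin{equation*}
\left(\sum_{i=1}^k (x_i^*)^2\right)^{1/2} \leq \sqrt{k}\,\|X\|_\infty.
\end{equation*}
Taking expectations reduces the problem to showing $\mathbb{E}\|X\|_\infty \lesssim \log^\zeta(p)$.

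Next, I would bound $\mathbb{E}\|X\|_\infty$ by the same moment-method argument that appears in the proof of Lemma~\ref{control infty norm log moments}. For any $1 \leq q \leq \log p$, a union bound in $L^q$ together with the weak moment assumption gives
\begin{equation*}
\left(\mathbb{E}\|X\|_\infty^q\right)^{1/q} = \left(\mathbb{E}\max_{j \in [p]}|x_j|^q\right)^{1/q} \leq p^{1/q}\,\left(\mathbb{E}|x_1|^q\right)^{1/q} \lesssim p^{1/q}\,q^\zeta.
\end{equation*}
Choosing $q = \log p$ yields $\left(\mathbb{E}\|X\|_\infty^q\right)^{1/q} \lesssim e\,\log^\zeta(p)$, and by Jensen's inequality $\mathbb{E}\|X\|_\infty \leq \left(\mathbb{E}\|X\|_\infty^q\right)^{1/q} \lesssim \log^\zeta(p)$. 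Combining with the first display concludes the proof.

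The only possible concern is tightness: at $k = p$ this bound gives $\sqrt{p}\,\log^\zeta(p)$, whereas $\mathbb{E}\|X\|_2 \leq \sqrt{p}$ by Jensen applied to $\mathbb{E}x_1^2 = 1$. So the lemma is off by a factor $\log^\zeta(p)$ at the extreme $k = p$, which reflects the fact that the elementary bound $x_i^* \leq x_1^*$ ignores the decay of the order statistics. This is acceptable for the stated conclusion, and in fact is needed since for small $k$ the top-$k$ contribution for heavy-tailed coordinates is genuinely dominated by the largest ones, so no essentially sharper estimate holds uniformly in $k$ without stronger assumptions. Hence there is no substantive obstacle — the entire argument is a one-line pointwise comparison followed by the standard moment-method bound on $\mathbb{E}\|X\|_\infty$.
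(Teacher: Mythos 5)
Your proof is correct, and it takes a genuinely different (and simpler) route than the paper. The paper adapts Lemma~6.5 of Mendelson~\cite{mendelson2014learning}: it derives a tail bound for the $j$-th order statistic, $\mathbb{P}(x_j^*\geq t)\leq\binom{p}{j}\mathbb{P}^j(|x_1|\geq t)$, combines this with the weak moment assumption at $q=\log p$, and then integrates these tails term by term before applying Jensen to the sum $\sum_{i=1}^k\mathbb{E}(x_i^*)^2$. You instead discard the decay of the order statistics entirely, using the crude pointwise bound $x_i^*\leq x_1^*=\|X\|_\infty$ and reducing the whole estimate to $\sqrt{k}\,\mathbb{E}\|X\|_\infty$, which you control by the same moment method (union bound in $L^q$ at $q=\log p$, then Jensen). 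Both arguments land on the identical $\sqrt{k}\,\log^\zeta(p)$ bound, since after Jensen the paper's tail integration also yields $\mathbb{E}(x_j^*)^2\lesssim\log^{2\zeta}(p)$ uniformly in $j$ and then just sums $k$ of them. The advantage of the paper's route is that it exposes the geometric decay of $\mathbb{P}(x_j^*\geq t)$ in $j$, which could be exploited for a sharper bound in $k$ under stronger assumptions; the advantage of yours is that it is shorter, avoids the order-statistic tail computation altogether, and makes transparent exactly where the $\log^\zeta(p)$ loss at $k=p$ comes from.
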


\begin{proof}
The proof is a small adaptation from Lemma 6.5. in~\cite{mendelson2014learning} where $\zeta=1/2$ is assumed.  Fix $1 \leq j \leq p$, $1 \leq q \leq \log(p)$ and $t>0$. We have by the weak moment assumption for some $c_1>0$
$$
\mathbb P \left( x_j^* \geq t \right) \leq {p \choose j} \mathbb P^j \left( |x_1 | \geq t  \right) \leq {p \choose j}  \left( \frac{\mathbb E | x_1 |^q}{t^q} \right)^j \leq {p \choose j}  \left( \frac{c q^{q\zeta}}{t^q} \right)^j
$$
Since ${p \choose j} \leq \exp(j \log(p))$, taking $q = \log(p)$ we get
$$
\mathbb P \left( x_j^* \geq t \right) \leq \left( \frac{c  \log^{\zeta}(p)}{t} \right)^{j\log(p)}
$$
Hence, integrating out the tails and using Jensen's inequality
it follows that

$$
\mathbb E \left( \sum_{i=1}^k \left( (x_i^*)^2 \right) \right)^{1/2} \leq \left( \mathbb E\sum_{i=1}^k \left( x_i^* \right)^2 \right)^{1/2} \lesssim  \log^{\zeta}(p) \sqrt k .  
$$
\end{proof}
\noindent \textbf{Proof of Proposition \ref{control Rademacher complexity low moments}} \\
\begin{proof}
From Equation 3.1 in~\cite{mendelson2005reconstruction}, we have that 
\begin{align*}
  \mathbb E \sup_{\beta \in aB_1^p \cap B_2^p} \frac{1}{n} \sum_{i=1}^n \sigma_i \langle X_i , \beta \rangle & \leq 2 \mathbb E \sup_{\beta \in B_0^p(a^2) \cap B_2^p} \frac{1}{n} \sum_{i=1}^n \sigma_i \langle X_i , \beta \rangle \\
    & = \frac{2}{\sqrt n} \mathbb E \sup_{\beta \in B_0^p(a^2) \cap B_2^p} \langle W , \beta \rangle
\end{align*}
where $B_0^p(a^2) = \{ \beta \in \mathbb{R}^p : \|\beta \|_0 \leq a^2 \}$ and $W =n^{-1/2}\sum_{i=1}^n \sigma_i X_i$ and it follows that
\begin{align} \label{ref eq rademacher order}
  \mathbb E \sup_{\beta \in aB_1^p \cap B_2^p} \frac{1}{n} \sum_{i=1}^n \sigma_i \langle X_i , \beta \rangle \leq \frac{2}{\sqrt n} \mathbb E \left(\sum_{i=1}^{a^2} (W_i^*)^2 \right)^{1/2} .
\end{align}
The $(W_i)$'s are centered random variables. For $1 \leq q \leq  \log(p)  $, using the Khintchine-Kahane inequality, Proposition 3.2.8 in~\cite{GineNickl16}, and Jensen's inequality
\begin{align*}
    \left(\mathbb E | W_j |^q \right)^{1/q}  & = \left( \mathbb{E} \left | \frac{1}{\sqrt n }\sum_{i=1}^n \sigma_i X_{i,j}  \right|^q \right)^{1/q} \lesssim \sqrt{q} \mathbb{E} \left( \frac{1}{n}\sum_{i=1}^n X_{i,j}^2 \right)^{1/2} \\
    &\leq \sqrt{q} \left( \mathbb{E} X_{1,1}^2 \right)^{1/2} \leq  \sqrt{q}
\end{align*}
Thus, applying Lemma~\ref{lemma ordering} with $\zeta=1/2$ to bound \eqref{ref eq rademacher order} we have that 
$$
  \mathbb E \sup_{\beta \in aB_1^p \cap B_2^p} \frac{1}{n} \sum_{i=1}^n \sigma_i \langle X_i , \beta \rangle\lesssim a \sqrt \frac{ \log(p)}{n} ,
$$
concluding the proof. 
\end{proof}
\end{document}